\let\origsection=\section \def\section{\@ifstar{\origsection*}{\mysection}}
\def\mysection{\@startsection{section}{1}\z@{.7\linespacing\@plus\linespacing}{.5\linespacing}{\normalfont\scshape\centering\S}}
\renewcommand{\PrintDOI}[1]{\doi{#1}}
\numberwithin{equation}{section}
\numberwithin{figure}{section}
\def\alabel{\upshape({\itshape \alph*\,})}
\let\polishlcross=\l
\def\l{\ifmmode\ell\else\polishlcross\fi}
\let\setminus=\smallsetminus
\def\moverlay{\mathpalette\mov@rlay}
\def\mov@rlay#1#2{\leavevmode\vtop{   \baselineskip\z@skip \lineskiplimit-\maxdimen
		\ialign{\hfil$\m@th#1##$\hfil\cr#2\crcr}}}
\newcommand{\charfusion}[3][\mathord]{
	#1{\ifx#1\mathop\vphantom{#2}\fi
		\mathpalette\mov@rlay{#2\cr#3}
	}
	\ifx#1\mathop\expandafter\displaylimits\fi}
\newcommand{\dcup}{\charfusion[\mathbin]{\cup}{\cdot}}
\DeclareFontFamily{U}  {MnSymbolC}{}
\DeclareSymbolFont{MnSyC}         {U}  {MnSymbolC}{m}{n}
\DeclareFontShape{U}{MnSymbolC}{m}{n}{
	<-6>  MnSymbolC5
	<6-7>  MnSymbolC6
	<7-8>  MnSymbolC7
	<8-9>  MnSymbolC8
	<9-10> MnSymbolC9
	<10-12> MnSymbolC10
	<12->   MnSymbolC12}{}
\DeclareMathSymbol{\powerset}{\mathord}{MnSyC}{180}
\DeclareMathSymbol{\YY}{\mathord}{MnSyC}{42}
\definecolor{uuuuuu}{rgb}{0.27,0.27,0.27}
\definecolor{sqsqsq}{rgb}{0.1255,0.1255,0.1255}
\let\epsilon=\varepsilon
\let\eps=\epsilon
\let\phi=\varphi
\let\rho=\varrho
\let\theta=\vartheta
\def\EE{{\mathds E}}
\def\NN{{\mathds N}}
\def\RR{{\mathds R}}
\def\cupp{{\mathrm{cup}}}
\def\capp{{\mathrm{cap}}}
\newcommand{\cF}{\mathcal{F}}
\newtheoremstyle{note}  {4pt}  {4pt}  {\sl}  {}  {\bfseries}  {.}  {.5em}          {}
\newtheoremstyle{introthms}  {3pt}  {3pt}  {\itshape}  {}  {\bfseries}  {.}  {.5em}          {\thmnote{#3}}
\newtheoremstyle{remark}  {2pt}  {2pt}  {\rm}  {}  {\bfseries}  {.}  {.3em}          {}
\theoremstyle{plain}
\newtheorem{theorem}{Theorem}[section]
\newtheorem{lemma}[theorem]{Lemma}
\newtheorem{prop}[theorem]{Proposition}
\newtheorem{corollary}[theorem]{Corollary}
\newtheorem{proposition}[theorem]{Proposition}
\newtheorem{problem}[theorem]{Problem}
\newtheorem{constr}[theorem]{Construction}
\newtheorem{conjecture}[theorem]{Conjecture}
\newtheorem{cor}[theorem]{Corollary}
\newtheorem{fact}[theorem]{Fact}
\newtheorem{claim}[theorem]{Claim}
\theoremstyle{note}
\newtheorem{definition}[theorem]{Definition}
\theoremstyle{remark}
\newcommand*\patchAmsMathEnvironmentForLineno[1]{
	\expandafter\let\csname old#1\expandafter\endcsname\csname #1\endcsname
	\expandafter\let\csname oldend#1\expandafter\endcsname\csname end#1\endcsname
	\renewenvironment{#1}
	{\linenomath\csname old#1\endcsname}
	{\csname oldend#1\endcsname\endlinenomath}}
\newcommand*\patchBothAmsMathEnvironmentsForLineno[1]{
	\patchAmsMathEnvironmentForLineno{#1}
	\patchAmsMathEnvironmentForLineno{#1*}}
\newcommand{\overrighharpoonup}[1]{\ThisStyle{%
		\vbox {\m@th\ialign{##\crcr
				\rightharpoonupfill \crcr
				\noalign{\kern-\p@\nointerlineskip}
				$\hfil\SavedStyle#1\hfil$\crcr}}}}
\def\rightharpoonupfill{%
	$\SavedStyle\m@th\mkern+0.8mu\cleaders\hbox{$\shortbar\mkern-4mu$}\hfill\rightharpoonuptip\mkern+0.8mu$}
\def\rightharpoonuptip{%
	\raisebox{\z@}[2pt][1pt]{\scalebox{0.55}{$\SavedStyle\rightharpoonup$}}}
\def\shortbar{%
	\smash{\scalebox{0.55}{$\SavedStyle\relbar$}}}
\let\lra=\longrightarrow
\newsavebox\myboxA
\newsavebox\myboxB
\newlength\mylenA
\newcommand*\xoverline[2][0.75]{%
	\sbox{\myboxA}{$\m@th#2$}%
	\setbox\myboxB\null
	\ht\myboxB=\ht\myboxA%
	\dp\myboxB=\dp\myboxA%
	\wd\myboxB=#1\wd\myboxA
	\sbox\myboxB{$\m@th\overline{\copy\myboxB}$}
	\setlength\mylenA{\the\wd\myboxA}
	\addtolength\mylenA{-\the\wd\myboxB}%
	\ifdim\wd\myboxB<\wd\myboxA%
	\rlap{\hskip 0.5\mylenA\usebox\myboxB}{\usebox\myboxA}%
	\else
	\hskip -0.5\mylenA\rlap{\usebox\myboxA}{\hskip 0.5\mylenA\usebox\myboxB}%
	\fi}
\DeclareSymbolFont{symbolsC}{U}{txsyc}{m}{n}
\DeclareMathSymbol{\strictif}{\mathrel}{symbolsC}{74}
\DeclareSymbolFont{stmry}{U}{stmry}{m}{n}
\DeclareMathSymbol\arrownot\mathrel{stmry}{"58}
\DeclareMathSymbol\Arrownot\mathrel{stmry}{"59}
\let\sm=\smallsetminus
\begin{document}
\title[The feasible region of induced graphs]{The feasible region of induced graphs}

\author{Xizhi Liu}
\address{Department of Mathematics, Statistics, and Computer Science, University of Illinois,
Chicago, IL 60607 USA}
\email{xliu246@uic.edu}
\email{mubayi@uic.edu}
\thanks{The first and second author's research is partially supported by NSF awards 
DMS-1763317 and DMS-1952767.}

\author{Dhruv Mubayi}
%

\author{Christian Reiher}
\address{Fachbereich Mathematik, Universit\"at Hamburg, Hamburg, Germany}
\email{Christian.Reiher@uni-hamburg.de}

\subjclass[2010]{}
\keywords{inducibility of graphs, feasible regions}

\begin{abstract}
The feasible region $\Omega_{{\rm ind}}(F)$ of a graph $F$ is the collection of 
points $(x,y)$ in the unit square such that there exists a sequence of graphs 
whose edge densities approach~$x$ and whose induced $F$-densities approach~$y$. 
A complete description of $\Omega_{{\rm ind}}(F)$ is not known for any~$F$ with 
at least four vertices that is not a clique or an independent set.
The feasible region provides a lot of combinatorial information about~$F$.
For example, the supremum of~$y$ over all $(x,y)\in \Omega_{{\rm ind}}(F)$ is 
the inducibility of $F$ and $\Omega_{{\rm ind}}(K_r)$ yields the Kruskal-Katona 
and clique density theorems.

We begin a systematic study of $\Omega_{{\rm ind}}(F)$ by proving some general 
statements about the shape of $\Omega_{{\rm ind}}(F)$ and giving results for some 
specific graphs $F$. Many of our theorems apply to the more general setting of 
quantum graphs. For example, we prove a  bound for quantum graphs that generalizes 
an old result of Bollob\'as for the number of cliques in a graph with given edge density.  
We also consider the problems of determining $\Omega_{{\rm ind}}(F)$ when $F=K_r^-$, $F$ 
is a star, or $F$ is a complete bipartite graph. In the case of $K_r^-$ our results sharpen 
those predicted by the  edge-statistics conjecture of Alon et.\ al.\ while also extending 
a theorem of Hirst for $K_4^-$ that was proved using computer aided techniques and 
flag algebras. The case of the 4-cycle seems particularly 
interesting and 
we conjecture that $\Omega_{{\rm ind}}(C_4)$ is determined by the solution to the triangle 
density problem, which has been solved by Razborov. 
\end{abstract}
\maketitle
\section{Introduction}
\subsection{Feasible regions}
Given a graph $G$ denote by $V(G)$ and $E(G)$ the vertex set and edge set of $G$ respectively.
Let $v(G) = |V(G)|$, $e(G) = |E(G)|$, and call $\rho(G) = e(G)/\binom{v(G)}{2}$ the 
{\em edge density} of $G$.
For two graphs $F$ and $G$ denote by $N(F,G)$ the number of induced copies of $F$ in $G$,
and let $\rho(F,G) = {N(F,G)}/{\binom{v(G)}{v(F)}}$ be the {\em induced $F$-density} of $G$.

A {\em quantum graph} $Q$ is a formal linear combination of finitely many graphs,
i.e., an expression of the form  
\[
	Q = \sum_{i=1}^{m}\lambda_i F_i\,,
\]
where $m$ is a nonnegative integer, the numbers $\lambda_1, \dots, \lambda_m$ are real, 
and $F_1, \dots, F_m$
are graphs. We call $F_i$ a {\em constituent} of $Q$ if $\lambda_i \neq 0$. 
Two quantum graphs $Q$, $Q'$ are equal if they have the same constituents and the same 
(nonzero) coefficients for each constituent.
The {\em complement} of~$Q$ is $\overline{Q} = \sum_{i=1}^{m}\lambda_i \overline{F}_i$, 
where $\overline{F}_i$ denotes the complement of $F_i$ for each $i\in [m]$. A quantum 
graph $Q$ is {\em self-complementary} if $Q = \overline{Q}$.
Every graph parameter~$f$ can be extended linearly to quantum graphs by stipulating  
$f(Q) = \sum_{i=1}^{m}\lambda_i f(F_i)$.
In particular,
\begin{align}
N(Q,G) = \sum_{i=1}^{m}\lambda_i N(F_i, G) \quad{\rm and}\quad
\rho(Q,G) = \sum_{i=1}^{m}\lambda_i \rho(F_i, G)\,. \notag
\end{align}
The main notion investigated in this article is the following.

\begin{definition}[Feasible region]\label{DFN:induced-feasible-region}
Let $Q = \sum_{i=1}^{m}\lambda_i F_i$ be a quantum graph.
\begin{itemize}
\item
A sequence $\left(G_{n}\right)_{n=1}^{\infty}$ of graphs is \emph{$Q$-good} if
$\lim_{n\to \infty}v(G_n) = \infty$, $\lim_{n\to\infty}\rho(G_n)$ exists, and 
for every $i\in[m]$ the limit $\lim_{n\to\infty}\rho(F_i,G_n)$ exists.
\item
A $Q$-good sequence of graphs $\left(G_{n}\right)_{n=1}^{\infty}$ \emph{realizes} 
a point $(x,y)\in [0,1]\times \RR$ if
\begin{align}
\lim_{n\to\infty}\rho(G_n) = x \quad{\rm and}\quad \lim_{n\to\infty}\rho(Q,G_n) = y. \notag
\end{align}
\item
The \emph{feasible region} $\Omega_{{\rm ind}}(Q)$ of (induced) $Q$ is the collection of 
points $(x,y) \in [0,1] \times \RR$ realized by some $Q$-good 
sequence $\left(G_{n}\right)_{n=1}^{\infty}$.
\end{itemize}
\end{definition}

We commence a systematic study of the feasible region of quantum graphs $Q$.
As we shall see soon, $\Omega_{{\rm ind}}(Q)$ is determined by its boundary, 
so it suffices to consider for every $x\in [0,1]$ the numbers
\[
	i(Q,x) = \inf\{y\colon (x,y)\in \Omega_{{\rm ind}}(Q)\} 
	\qquad{\rm and}\qquad
	I(Q,x) = \sup\{y\colon (x,y)\in \Omega_{{\rm ind}}(Q)\}\,.
\]

Determining the values of $i(Q,x)$ and $I(Q,x)$ under some constraints is a central topic in extremal combinatorics.
For example, the classical Kruskal-Katona theorem~\cites{Kru63,Ka68} implies 
\[
	I(K_{r},x) = x^{r/2} 
	\quad \text{for all $r\ge 2$ and $x\in[0,1]$}\,.
\]
Tur\'{a}n's seminal theorem~\cite{TU41} and supersaturation show that for 
every integer $r \ge 3$,
\[
	i(K_{r},x)> 0 \qquad \Longleftrightarrow \qquad x > (r-2)/(r-1)\,.
\]
Determining $i(K_{r},x)$ for all $x > (r-2)/(r-1)$ is highly nontrivial
and was solved for~$r = 3$ by Razborov~\cite{Ra08}, for $r = 4$ by Nikiforov~\cite{Niki11}, 
and for all $r$ by the third author~\cite{Re16}.

Regarding quantum graphs with at least two constituents,
a classical result of Goodman~\cite{Go59} says that $i(K_3+\overline{K}_3,x) \ge 1/4$ 
and equality holds only for $x = 1/2$.
Erd\H{o}s~\cite{Er62} conjectured
that $i(K_{r}+\overline{K}_{r},x) \ge 2^{1-\binom{r}{2}}$ for $r\ge 4$ with equality 
for $x = 1/2$. This conjecture was disproved by Thomason~\cite{Thom89} 
for all $r\ge 4$, but even for $r=4$ the minimum value of $i(K_{r}+\overline{K}_{r},x)$ 
is still unknown. 

For a single graph $F$ the function $I(F,x)$ is closely related to 
the {\em inducibility} 
\[
	\mathrm{ind}(F) = \lim_{n\to \infty}\max\left\{\rho(F,G)\colon v(G) = n\right\}
\]
of $F$ introduced by Pippenger and Golumbic~\cite{PG75}. In fact, 
$\mathrm{ind}(F)=\max\{I(F, x)\colon x\in [0, 1]\}$, where the maximum exists due to 
the continuity of $I(F, x)$ (see Theorem~\ref{thm:1347} below). 

Determining the feasible region $\Omega_{{\rm ind}}(F)$ of a single graph $F$ is a special 
case of the more general problem to determine the {\em graph profile} $T(\cF)$ of a given
finite family of graphs $\cF = \{F_1,\dots, F_k\}$.
Here $T(\cF) \subseteq [0,1]^{k}$ is the collection of limit points of
$\left((\rho(F_1,G_i),\ldots, \rho(F_k,G_i))\right)_{i=1}^{\infty}$ with $v(G_i) \to \infty$.
Besides the clique density theorem, very few results are known about graph profiles 
(see~\cites{HN11,HLNPS14,BL16,HN19}).

Our results are of two flavors.
\begin{itemize}
\item
We prove some general results about the shape of $\Omega_{{\rm ind}}(Q)$.
Our main result here is Theorem~\ref{thm:1347},
which states that  $I(Q,x)$ and $i(Q,x)$ are continuous and almost everywhere differentiable.
\item
We study $\Omega_{{\rm ind}}(Q)$ for some specific choices of $Q$ for which $\mathrm{ind}(Q)$ 
has been investigated by many researchers.
We focus on quantum graphs whose constituents are complete multipartite graphs and 
prove a general upper bound for $I(Q,x)$.
Prior to this work, $\Omega_{{\rm ind}}(F)$ for a single graph $F$ was determined only 
when $F$ is a  clique or an independent set. Here we extend this to the case $F=K_{1,2}$ 
and also obtain results for complete bipartite graphs.
Furthermore we study $\Omega_{{\rm ind}}(K_r^-)$, where $K_r^-$ arises from the clique $K_r$ 
by the deletion of a single edge. As a consequence of our results, we determine the 
inducibility $\mathrm{ind}(K_r^-)$, which is new for $r\ge 5$. 
\end{itemize}

\subsection{General results}\label{SUBSEC:general-results-intro}

The following result describes the shape of the feasible region of an arbitrary 
quantum graph. 
 
\begin{theorem}\label{thm:1347}
	For every quantum graph $Q$ we have 
	\[
		\Omega_{{\rm ind}}(Q)
		=
		\bigl\{(x, y)\in [0, 1]\times \RR\colon i(Q, x)\le y\le I(Q, x)\bigr\}\,.
	\]
	Moreover, the boundary functions $i(Q, x)$ and $I(Q, x)$ are continuous and 
	almost everywhere differentiable. 
\end{theorem}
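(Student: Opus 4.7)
The plan is to reformulate $\Omega_{\mathrm{ind}}(Q)$ in the language of graphons: namely, I identify it with the image of the compact graphon space under the continuous map $W\mapsto (\rho(W),\rho(Q,W))$. This uses two standard facts from graph-limit theory: every $Q$-good sequence converges (along a subsequence) in the cut metric to a graphon with the correct limiting densities, and conversely every graphon is the almost-sure limit of its sequence of $W$-random graphs. As a consequence $\Omega_{\mathrm{ind}}(Q)$ is a compact (in particular closed) subset of $[0,1]\times \RR$.

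For the first assertion of the theorem I would establish \emph{vertical convexity}. Given graphons $U_0,U_1$ with $\rho(U_0)=\rho(U_1)=x$ realising points $(x,y_0)$ and $(x,y_1)$, and any $t\in[0,1]$, define a graphon $W_t$ on $[0,1]^2$ by placing a rescaled copy of $U_0$ on $[0,t]^2$, a rescaled copy of $U_1$ on $[t,1]^2$, and the constant $x$ on the two off-diagonal rectangles. A direct computation yields $\rho(W_t)=x$ for every $t$, whereas $\rho(Q,W_t)$ is a polynomial in $t$ taking the values $y_0$ and $y_1$ at $t=1$ and $t=0$ respectively (conditioning on how the $v(F_i)$ vertices distribute between the two blocks gives the polynomial expansion). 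The intermediate value theorem then realises every value in $[\min(y_0,y_1),\max(y_0,y_1)]$, so vertical slices of $\Omega_{\mathrm{ind}}(Q)$ are intervals.

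To establish continuity I would prove the stronger assertion that $i(Q,\cdot)$ and $I(Q,\cdot)$ are locally Lipschitz on $(0,1)$. Upper semicontinuity of $I(Q,\cdot)$ is implicit in the closedness of $\Omega_{\mathrm{ind}}(Q)$. For the opposite direction fix $x\in(0,1)$ and an optimal graphon $W$ realising $(x,I(Q,x))$; perturb it by gluing in a constant block. Explicitly, for small $t>0$ and $d\in[0,1]$ let $W_{t,d}$ be the graphon on $[0,1]^2$ that equals a rescaled copy of $W$ on $[0,1-t]^2$, the constant $1$ on $[1-t,1]^2$, and the constant $d$ on the off-diagonal rectangles. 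An elementary computation yields $\rho(W_{t,d})=x+2t(d-x)+O(t^2)$, so by varying $d\in[0,1]$ we reach an interval of edge densities of length comparable to $t\min(x,1-x)$ around $x$. Since $\|W_{t,d}-W\|_1=O(t)$, the standard Lipschitz bound $|\rho(F,W_1)-\rho(F,W_2)|\le \binom{v(F)}{2}\|W_1-W_2\|_1$, extended linearly to the constituents of $Q$, gives $|\rho(Q,W_{t,d})-I(Q,x)|=O_Q(t)$. Combining these two estimates produces points $(x',y')\in \Omega_{\mathrm{ind}}(Q)$ with $y'\ge I(Q,x)-C|x-x'|$ for all $x'$ near $x$, where $C$ depends on $Q$ and on $\min(x,1-x)$; hence $I(Q,\cdot)$ is locally Lipschitz. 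The argument for $i(Q,\cdot)$ is symmetric. Since every locally Lipschitz function on an interval is almost everywhere differentiable, and continuity at the endpoints $x=0,1$ follows from a one-sided version of the same construction together with closedness, the proof concludes.

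The hard part, I expect, is the quantitative perturbation argument for Lipschitz continuity: one has to verify that the construction can shift the edge density in \emph{both} directions from every interior $x$, to treat the endpoints $0$ and $1$ where only a one-sided perturbation makes sense, and to supply the $L^1$ stability estimate for induced densities with an explicit bound uniform over graphons. These ingredients are standard in graph-limit theory but the bookkeeping is what turns an outline into a complete proof.
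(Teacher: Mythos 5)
Your argument is sound in outline but follows a genuinely different route from the paper's: you work entirely in the graphon limit space, whereas the paper stays with finite graph sequences throughout. For the formula $\Omega_{{\rm ind}}(Q)=\{(x,y)\colon i(Q,x)\le y\le I(Q,x)\}$ the paper proves vertical convexity by interpolating between two graphs on a common vertex set one edge at a time, keeping the edge density trapped between the two endpoints; your block graphon $W_t$ with the intermediate value theorem is equivalent in spirit and cleaner once the graphon machinery is granted. For continuity the paper's key lemma is a \emph{one-sided} dilution estimate obtained by adding isolated vertices to a near-optimal sequence, which shows that $\bigl(I(Q,x)+C\bigr)/x^{\ell}$ is decreasing in $x$; almost-everywhere differentiability then comes from Lebesgue's theorem for monotone functions, left-continuity from closedness of $\Omega_{{\rm ind}}(Q)$, and right-continuity from the complement symmetry $I(Q,x)=I(\overline{Q},1-x)$. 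Your two-sided block perturbation targets local Lipschitz continuity on $(0,1)$ directly and so needs no complement trick. One step in your sketch is misstated, though: the estimate $\|W_{t,d}-W\|_1=O(t)$ does not hold for a general graphon $W$ with a rate depending only on $t$, because the rescaled copy of $W$ on $[0,1-t]^2$ need not be $L^1$-close to $W$ at rate $O(t)$ (the rate depends on the modulus of $L^1$-continuity of $W$ under dilation). The inequality you actually need, $|\rho(Q,W_{t,d})-\rho(Q,W)|=O_Q(t)$, is nonetheless correct and follows by conditioning: with probability $(1-t)^{v(F_i)}=1-O(t)$ all $v(F_i)$ sampled points land in $[0,1-t]$, where the rescaled $W$ yields density exactly $\rho(F_i,W)$, and the complementary event has probability $O(t)$ and contributes a bounded term. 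Substituting this sampling estimate for the $L^1$ step closes the gap and makes your outline complete.
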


In contrast to Theorem~\ref{thm:1347} Hatami and Norin~\cite{HN19}
gave an example of a finite family $\cF$ of graphs such that the intersection of
the graph profile $T(\cF)$ with some hyperplane has a nowhere differentiable boundary.

For every quantum graph $Q$ the feasible regions of $Q$, $-Q$ and $\overline{Q}$ 
are closely related. Indeed, using the formulae  
\begin{align}
N(F,G) = N(\overline{F},\overline{G})
\quad{\rm and}\quad
\rho(F,G) = \rho(\overline{F},\overline{G}), \notag
\end{align} 
which are valid for all graphs $F$ and $G$, one easily confirms the following 
observation. 

\vbox{
\begin{fact}\label{FACT:complement-minus-inducibility}
Let $Q$ be a quantum graph. 
\begin{enumerate}[label=\alabel]
\item\label{it:13a} The feasible regions of $Q$ and $-Q$ are symmetric to each other about 
the $x$-axis. Hence, $I(-Q, x)=-i(Q, x)$ and $i(-Q, x)=-I(Q, x)$ hold for all $x\in [0, 1]$.
\item\label{it:13b} The feasible regions of $Q$ and $\overline{Q}$ are symmetric to 
each other about the line ${x=1/2}$. 
Thus we have $I(Q,x) = I(\overline{Q},1-x)$ and $i(Q,x) = i(\overline{Q},1-x)$
for every $x\in [0, 1]$.
In particular, if $Q$ is self-complementary, then $I(Q,x) = I(Q,1-x)$ and $i(Q,x) = i(Q,1-x)$,
i.e. the functions $I(Q,x)$ and $i(Q,x)$ are symmetric around $x=1/2$. \qed
\end{enumerate}
\end{fact}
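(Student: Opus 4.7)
The plan is to deduce both parts directly from the defining identities $N(F,G)=N(\overline{F},\overline{G})$ and $\rho(F,G)=\rho(\overline{F},\overline{G})$ already recalled just above the statement, together with the observation that linearity of the extension of $\rho$ to quantum graphs is compatible with both the operations $Q\mapsto -Q$ and $Q\mapsto\overline{Q}$.

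For part~\ref{it:13a}, I would simply note that for any quantum graph $Q=\sum_i\lambda_iF_i$ and any graph $G$, linearity gives
\[
\rho(-Q,G)=\sum_i(-\lambda_i)\rho(F_i,G)=-\rho(Q,G)\,.
\]
Hence, whenever a sequence $(G_n)_{n\ge 1}$ is $Q$-good, it is also $(-Q)$-good, and if it realizes $(x,y)$ for $Q$ then it realizes $(x,-y)$ for $-Q$. This yields the inclusion $\Omega_{\mathrm{ind}}(-Q)\supseteq\{(x,-y)\colon(x,y)\in\Omega_{\mathrm{ind}}(Q)\}$; the reverse inclusion follows by replacing $Q$ with $-Q$ (since $-(-Q)=Q$). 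The stated identities $I(-Q,x)=-i(Q,x)$ and $i(-Q,x)=-I(Q,x)$ then fall out from the definitions of $I$ and $i$ as supremum and infimum.

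For part~\ref{it:13b}, I would again use linearity and the identity $\rho(F,G)=\rho(\overline{F},\overline{G})$ to obtain, for any quantum graph $Q=\sum_i\lambda_iF_i$,
\[
\rho(Q,G)=\sum_i\lambda_i\rho(F_i,G)=\sum_i\lambda_i\rho(\overline{F}_i,\overline{G})=\rho(\overline{Q},\overline{G})\,.
\]
Combined with the elementary fact $\rho(\overline{G})=1-\rho(G)$, this shows that if $(G_n)_{n\ge 1}$ is $Q$-good and realizes $(x,y)$, then the complemented sequence $(\overline{G_n})_{n\ge 1}$ is $\overline{Q}$-good and realizes $(1-x,y)$. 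Thus $\Omega_{\mathrm{ind}}(\overline{Q})\supseteq\{(1-x,y)\colon(x,y)\in\Omega_{\mathrm{ind}}(Q)\}$, and applying the same argument with $\overline{Q}$ in place of $Q$ gives the reverse inclusion. The formulas $I(Q,x)=I(\overline{Q},1-x)$ and $i(Q,x)=i(\overline{Q},1-x)$ then follow at once, as does the self-complementary specialization.

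There is essentially no obstacle here: the only minor point to verify is that taking complements of an entire sequence preserves the convergence conditions in the definition of a good sequence, which is immediate because the map $\rho(\,\cdot\,,G)\mapsto\rho(\,\cdot\,,\overline{G})$ just permutes and rescales the relevant densities. Consequently the whole argument amounts to unpacking the definitions and applying the two identities that have been recorded in the paragraph preceding the statement.
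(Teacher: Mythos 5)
Your proposal is correct and follows the same route the paper intends: the paper records this as an observation that "one easily confirms" from the complementation identities $N(F,G)=N(\overline F,\overline G)$ and $\rho(F,G)=\rho(\overline F,\overline G)$ together with linearity, which is exactly what you unpack (using in addition $\rho(\overline G)=1-\rho(G)$ for part (b) and $\rho(-Q,G)=-\rho(Q,G)$ for part (a)).
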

}

The next result shows that for most single graphs $F$ the lower boundary 
function $i(F, x)$ vanishes identically. The only exceptions occur when $F$
is a clique or the complement of a clique, in which case $i(F, x)$ is given by 
the clique density theorem (see Theorem~\ref{thm:cdt})
and Fact~\ref{FACT:complement-minus-inducibility}\ref{it:13b}.

\begin{proposition}\label{PROP:lower-bound-is-zero}
If $F$ denotes a graph which is neither complete nor empty, 
then ${i(F,x) = 0}$ for all $x\in [0,1]$.
\end{proposition}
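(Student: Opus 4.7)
The plan is to exhibit, for every $x \in [0,1]$, an explicit sequence of graphs realizing the point $(x,0)$. The main construction is the block graph $G_{n,s} = K_s \dcup \overline{K}_{n-s}$, a clique on $s$ vertices disjoint from an independent set on $n-s$ vertices. Its edge density $\binom{s}{2}/\binom{n}{2}$ can be tuned to any prescribed $x \in [0,1]$; for instance, $s_n = \lceil \sqrt{x}\, n \rceil$ gives $\rho(G_{n,s_n}) \to x$.

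The key structural observation is that every $k$-vertex subset of $G_{n,s}$ (with $k = v(F)$) induces a graph of the form $K_i \dcup \overline{K}_{k-i}$, where $i$ counts vertices taken from the clique part. Hence if $F$ is not isomorphic to any such graph, then $N(F, G_{n,s}) = 0$ identically, and the sequence $(G_{n,s_n})$ realizes $(x,0)$, settling this case.

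It remains to handle $F \cong K_i \dcup \overline{K}_j$. Since $F$ is neither complete nor empty, this forces $i \geq 2$ and $j \geq 1$, so $k \geq 3$. I will pass to the complement: $\overline{F}$ is the join of $\overline{K}_i$ with $K_j$, which is connected (any two vertices are adjacent either directly or via a common neighbour in $K_j$) but not complete (the $\overline{K}_i$ side still has non-edges). By contrast, any graph of the form $K_{i'} \dcup \overline{K}_{k-i'}$ on $k \geq 3$ vertices is either disconnected or a clique, so $\overline{F}$ is not of this form. The previous case therefore gives $i(\overline{F}, y) = 0$ for all $y \in [0,1]$, and Fact~\ref{FACT:complement-minus-inducibility}\ref{it:13b} concludes with $i(F, x) = i(\overline{F}, 1 - x) = 0$.

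The only subtle point is the dichotomy that complements of nontrivial ``clique-plus-independent-set'' graphs are never themselves of that form; once verified, the rest is a direct computation on $G_{n,s}$ combined with the complement symmetry.
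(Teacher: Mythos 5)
Your proof is correct and follows essentially the same route as the paper's: the key construction in both is the graph ``clique plus isolated vertices'' (your $G_{n,s}$, the paper's $H'(n,x)$), together with its complement via Fact~\ref{FACT:complement-minus-inducibility}\ref{it:13b}. The only cosmetic difference is the case split—you distinguish whether $F\cong K_i\dcup\overline K_j$, while the paper distinguishes whether $F$ has an isolated vertex—but the underlying observation (that induced subgraphs of $G_{n,s}$ are precisely the graphs $K_i\dcup\overline K_j$) is identical.
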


We proceed with some estimates based on random graphs. 
Given a quantum graph $Q = \sum_{i=1}^{m}\lambda_i F_i$ we define
\begin{align}
{\rm rand}(Q,x)
= \sum_{i\in[m]}\lambda_{i} 
\frac{\left(v(F_i)\right)!}{|{\rm Aut}(F_i)|}
x^{e(F_i)}(1-x)^{e(\overline{F}_i)}
\quad \text{ for every } x\in [0, 1]\,, \notag
\end{align}
where ${\rm Aut}(F_i)$ is the automorphism group of $F_i$ for $i\in [m]$. 
Equivalently, 
\[
	{\rm rand}(Q,x)
	=
	\lim_{n\to\infty}\EE \,\rho(Q, G(n, x))\,,
\]
where $G(n, x)$ denotes the standard binomial random graph. It is well known
that the random variables $\rho(G(n, x))$, $\rho(Q, G(n, x))$ are tightly 
concentrated around their expectations. This shows the following observation.

\begin{fact}\label{PROP:general-upper-lower-bound-for-quantum-graphs}
If $Q$ denotes a quantum graph and $x\in [0,1]$, then
\begin{align}
I(Q,x)  \ge {\rm rand}(Q,x) \ge i(Q,x)\,. \notag
\end{align}
In particular, for a single graph $F$ the inequality $I(F,x) > 0$ holds 
for all $x\in(0,1)$. \qed
\end{fact}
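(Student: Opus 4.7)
The plan is to realize the point $(x, {\rm rand}(Q, x))$ by an explicit $Q$-good sequence, after which both inequalities $i(Q,x)\le {\rm rand}(Q,x)\le I(Q,x)$ follow directly from the definition of the feasible region. The natural candidate is the Erd\H{o}s--R\'enyi random graph $G(n, x)$.

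First I would verify that for every $n\ge v(F_i)$,
\[
\EE\bigl[\rho(F_i, G(n,x))\bigr] = \frac{v(F_i)!}{|{\rm Aut}(F_i)|}\, x^{e(F_i)}(1-x)^{e(\overline{F}_i)},
\]
which follows by summing over injective maps $V(F_i)\hookrightarrow[n]$, multiplying by the per-map probability $x^{e(F_i)}(1-x)^{e(\overline{F}_i)}$, and then dividing by $|{\rm Aut}(F_i)|\binom{n}{v(F_i)}$. Consequently $\EE[\rho(Q, G(n,x))]={\rm rand}(Q,x)$ exactly, while $\EE[\rho(G(n,x))]=x$ trivially.

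Next I would establish concentration. Viewing $G(n,x)$ as a product of $\binom{n}{2}$ independent Bernoulli variables, toggling a single edge changes $N(F_i, G(n,x))$ by at most $O(n^{v(F_i)-2})$ and hence $\rho(F_i, G(n,x))$ by at most $O(n^{-2})$. The Azuma--Hoeffding inequality applied to the edge-exposure martingale then forces $\rho(F_i, G(n,x))$ to deviate from its mean by more than $n^{-1/3}$ with exponentially small probability, and the same holds for $\rho(G(n,x))$. A routine Borel--Cantelli argument then produces a deterministic sequence $(G_n)$ with $v(G_n)=n$ along which $\rho(G_n)\to x$ and $\rho(F_i, G_n)\to {\rm rand}(F_i, x)$ for every $i\in [m]$. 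This sequence is $Q$-good and realizes $(x, {\rm rand}(Q,x))$, proving the two central inequalities.

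For the \emph{in particular} clause, when $Q = F$ is a single graph the expression ${\rm rand}(F, x)=\frac{v(F)!}{|{\rm Aut}(F)|}\,x^{e(F)}(1-x)^{e(\overline{F})}$ is a product of strictly positive factors for every $x\in (0,1)$, so $I(F,x)\ge {\rm rand}(F,x)>0$ on this range. I do not anticipate a serious obstacle: this is a standard concentration-of-measure argument relying on an elementary moment identity and a crude Lipschitz bound. The only point meriting any attention is that Definition~\ref{DFN:induced-feasible-region} demands \emph{exact} limits rather than approximate ones, which the exponential Azuma tail arranges automatically via Borel--Cantelli.
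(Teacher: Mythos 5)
Your proposal is correct and is precisely the argument the paper sketches before stating this Fact (it is declared as an observation with a terminal \verb|\qed| after the remark that $\rho(G(n,x))$ and $\rho(Q,G(n,x))$ concentrate around their means): compute the exact expectation via labeled embeddings, apply Azuma--Hoeffding to the edge-exposure martingale with Lipschitz bound $O(n^{-2})$, and extract a deterministic $Q$-good sequence realizing $(x,\mathrm{rand}(Q,x))$. The only cosmetic remark is that Borel--Cantelli is slightly more than needed: since the failure probability for each $n$ tends to zero, a single union bound already guarantees the existence of a suitable $G_n$ for every large $n$, with no coupling of the random graphs across different $n$ required.
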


Let $P_{4,1}$ be the $5$-vertex graph that is the disjoint union of a path on $4$ vertices 
and an isolated vertex. It was asked in~\cite{EL15} whether the inducibility of some graph 
is achieved by a random graph and, in particular, whether 
the inducibility ${\mathrm{ind}}(P_{4,1})$ is 
achieved by the Erd\H{o}s-R\'{e}nyi random graph $G(n, 3/10)$. Here we pose an easier 
question of a similar flavor.

\begin{problem}\label{PROB:is-pseudorandom-bound-tight}
Do there exist a graph $F$ and some $x\in(0,1)$ such that $I(F,x) = {\rm rand}(F,x)$?
\end{problem}

\subsection{Complete multipartite graphs}
We now present our results on $I(Q, x)$ for specific quantum graphs $Q$.
Our focus is on quantum graphs whose constituents are complete multipartite graphs
(a graph whose edge set is empty is viewed as complete multipartite with only one part).
A case of particular interest is  $Q = K_r+\overline{K}_r$ for $r\ge 3$.
Goodman \cite{Go59} proved that for every graph $G$ on $n$ vertices
$\rho(K_3+\overline{K}_3,G) \ge 1/4+o(1)$
and the random graph $G(n,1/2)$ shows that this bound is tight.
Therefore, $i(K_3+\overline{K}_3,x) \ge 1/4$ and equality holds when $x = 1/2$.
Combining Goodman's result~\cite{Go59} with a theorem of Olpp~\cite{Olpp96}
one can determine $\Omega_{{\rm ind}}(K_3+\overline{K}_3)$ completely.

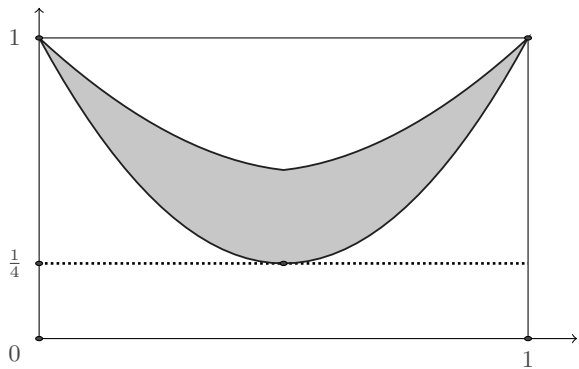
\begin{figure}[htbp]
\centering
\begin{tikzpicture}[xscale=6.5,yscale=4]
\draw [->] (0,0)--(1.1,0);
\draw [->] (0,0)--(0,1.1);
\draw (0,1)--(1,1);
\draw (1,0)--(1,1);
\draw [line width=1pt,dash pattern=on 1pt off 1.2pt,domain=0:1] plot(\x,{1/4});
\draw[line width=0.7pt,color=sqsqsq,fill=sqsqsq,fill opacity=0.25]
(0, 1)--(0.025, 0.963206)--(0.05, 0.927836)--(0.075, 0.89391)--(0.1, 0.861445)--
(0.125, 0.830463)--(0.15, 0.800984)--(0.175,0.77303)--(0.2, 0.746625)--(0.225, 0.721792)--
(0.25, 0.698557)--(0.275, 0.676946)--(0.3, 0.656986)--(0.325, 0.638707)--(0.35,0.62214)--
(0.375, 0.607318)--(0.4, 0.594274)--(0.425, 0.583046)--(0.45, 0.573673)--
(0.475, 0.566196)--(0.5, 0.56066)--(0.525,0.566196)--(0.55, 0.573673)--
(0.575, 0.583046)--(0.6, 0.594274)--(0.625, 0.607318)--(0.65, 0.62214)--
(0.675, 0.638707)--(0.7,0.656986)--(0.725, 0.676946)--(0.75, 0.698557)--
(0.775, 0.721792)--(0.8, 0.746625)--(0.825, 0.77303)--(0.85, 0.800984)--
(0.875,0.830463)--(0.9, 0.861445)--(0.925, 0.89391)--(0.95, 0.927836)--
(0.975, 0.963206)--(1, 1)--
(0.975, 0.926875)--(0.95, 0.8575)--(0.925, 0.791875)--(0.9,0.73)--
(0.875, 0.671875)--(0.85, 0.6175)--(0.825, 0.566875)--(0.8, 0.52)--(0.775, 0.476875)--
(0.75, 0.4375)--(0.725, 0.401875)--(0.7,0.37)--(0.675, 0.341875)--(0.65, 0.3175)--
(0.625, 0.296875)--(0.6,0.28)--(0.575, 0.266875)--(0.55, 0.2575)--(0.525, 0.251875)--
(0.5,0.25)--(0.475, 0.251875)--(0.45, 0.2575)--(0.425, 0.266875)--(0.4,0.28)--
(0.375, 0.296875)--(0.35, 0.3175)--(0.325, 0.341875)--(0.3,0.37)--(0.275, 0.401875)--
(0.25, 0.4375)--(0.225, 0.476875)--(0.2,0.52)--(0.175, 0.566875)--(0.15, 0.6175)--
(0.125, 0.671875)--(0.1,0.73)--(0.075, 0.791875)--(0.05, 0.8575)--(0.025, 0.926875)--(0, 1);

\begin{scriptsize}
\draw [fill=uuuuuu] (1,0) circle (0.2pt);
\draw[color=uuuuuu] (1,0-0.07) node {$1$};
\draw [fill=uuuuuu] (0,0) circle (0.2pt);
\draw[color=uuuuuu] (0-0.05,0-0.05) node {$0$};
\draw [fill=uuuuuu] (0,1) circle (0.2pt);
\draw[color=uuuuuu] (0-0.05,1) node {$1$};
\draw [fill=uuuuuu] (1,1) circle (0.2pt);
\draw [fill=uuuuuu] (0,1/4) circle (0.2pt);
\draw[color=uuuuuu] (0-0.05,1/4) node {$\frac{1}{4}$};
\draw [fill=uuuuuu] (1/2,1/4) circle (0.2pt);
\end{scriptsize}
\end{tikzpicture}
\caption{$\Omega_{{\rm ind}}(K_3+\overline{K}_3)$ is the shaded area above.}
\label{fig:feasible-region-K3+k3-bar}
\end{figure}

\begin{theorem}[Goodman~\cite{Go59}, Olpp~\cite{Olpp96}]\label{THM:feasible-region-K3-K3-bar}
For every $x\in [0,1]$ we have
\begin{align*}
\pushQED{\qed} 
i(K_3+\overline{K}_3,x) & = 1-3x+3x^2 \quad{\rm and}\\
I(K_3+\overline{K}_3,x) & = 1-3\min\left\{x-x^{3/2}, (1-x)-(1-x)^{3/2}\right\}\,. \qedhere 
\popQED
\end{align*}
\end{theorem}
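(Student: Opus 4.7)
The plan for Theorem~\ref{THM:feasible-region-K3-K3-bar} splits into the two boundary functions; the lower one $i$ follows from a Goodman/Cauchy--Schwarz argument, while the upper one $I$ combines an explicit construction with Olpp's theorem.

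\emph{Lower boundary.} The starting point will be the classical double-counting identity
\[
N(K_3, G) + N(\overline{K}_3, G) = \binom{n}{3} - \tfrac{1}{2}\sum_{v \in V(G)} d_v(n-1-d_v)\,,
\]
obtained by observing that each triple which is neither monochromatic nor anti-monochromatic contributes exactly $2$ to $\sum_v d_v(n-1-d_v)$. Applying Cauchy--Schwarz to the degree sequence yields $\sum_v d_v(n-1-d_v) \le 2(n-1)e(G) - 4 e(G)^2/n$, which asymptotically gives $\rho(K_3+\overline{K}_3, G) \ge 1-3x(1-x) + o(1)$ when $\rho(G) = x$. An Erd\H{o}s--R\'enyi random graph $G(n, x)$ (or any quasirandom sequence of density $x$) realizes equality asymptotically, so this formula is sharp.

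\emph{Upper boundary.} Since $K_3+\overline{K}_3$ is self-complementary, Fact~\ref{FACT:complement-minus-inducibility}\ref{it:13b} reduces us to $x \le 1/2$, where the target is $1 - 3\bigl((1-x) - (1-x)^{3/2}\bigr)$. For the construction side, let $G_n$ be the join of an independent set of size $k = \lfloor\sqrt{1-x}\, n\rfloor$ with a clique on $n-k$ vertices; a direct count gives $\rho(G_n) \to x$ and $\rho(K_3+\overline{K}_3, G_n) \to 1 - 3((1-x) - (1-x)^{3/2})$, showing that $I(K_3+\overline{K}_3, x)$ attains at least this value.

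The matching upper bound is Olpp's theorem~\cite{Olpp96}. Rewriting the Goodman identity in its symmetric form
\[
2\bigl[N(K_3, G)+N(\overline{K}_3, G)\bigr] = \sum_v\Bigl[\binom{d_v}{2}+\binom{n-1-d_v}{2}\Bigr]-\binom{n}{3}\,,
\]
the problem reduces to maximizing a convex symmetric function of the degree sequence, subject to its being graphic with a prescribed edge count. The hard part is that the LP relaxation (in which one would push every $d_v$ to $0$ or $n-1$) is way too loose, and the separate Kruskal--Katona constraints $\rho(K_3, G) \le x^{3/2}$ and $\rho(\overline{K}_3, G) \le (1-x)^{3/2}$ cannot be tight simultaneously; indeed adding them only yields $x^{3/2} + (1-x)^{3/2}$, which is strictly weaker than the claimed bound. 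Olpp's argument sharpens this trade-off by a careful combinatorial comparison, pinning down that the extremum is achieved precisely by the threshold-graph family described above.
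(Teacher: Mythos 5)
Your proposal is correct and follows the same route the paper takes: the paper states this result with a \qed and simply attributes the lower boundary to Goodman and the upper boundary to Olpp, which is exactly the division you use. You add value by deriving Goodman's lower bound from the degree-sequence identity and Cauchy--Schwarz (the calculation $\sum_v d_v(n-1-d_v) \le 2(n-1)e(G)-4e(G)^2/n$ and the passage to $\rho(K_3+\overline{K}_3,G)\ge 1-3x(1-x)+o(1)$ both check out), by verifying that $G(n,x)$ attains equality, and by computing that the join of an independent set of size $\lfloor\sqrt{1-x}\,n\rfloor$ with a clique gives $\rho(K_3+\overline{K}_3)\to 1-3(1-x)+3(1-x)^{3/2}$, which matches the claimed $I$-value once one observes that for $x\le 1/2$ the minimum in the formula is taken at $(1-x)-(1-x)^{3/2}$. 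Deferring the matching upper bound to Olpp's theorem is exactly what the paper does, so there is no gap; your surrounding commentary on why the Kruskal--Katona bounds $\rho(K_3)\le x^{3/2}$, $\rho(\overline{K}_3)\le(1-x)^{3/2}$ do not add up to the sharp bound is accurate and a nice explanatory touch.
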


For $r\ge 4$ determining $\Omega_{{\rm ind}}(K_r+\overline{K}_r)$ seems  beyond current methods.

\begin{problem}\label{PROB:feasible-region-cliques-independent-set}
Determine $\Omega_{{\rm ind}}(K_r+\overline{K}_r)$ for $r\ge 4$.
\end{problem}

Another well-studied problem concerns the determination of $\Omega_{{\rm ind}}(K_r)$
for $r\ge 3$. We already mentioned that $I(K_r, x)=x^{r/2}$ follows from the 
Kruskal-Katona theorem~\cites{Kru63,Ka68}. For the lower bound $i(K_r, x)$ we consider 
(independently of $r$) the following complete multipartite graphs. 

\begin{constr}\label{con:cdt}
For integers $n \ge k \ge 2$ and real $x\in\bigl(\frac{k-2}{k-1}, \frac{k-1}{k}\bigr]$
let $H^\star(n,x)$ be the complete $k$-partite graph on $n$ vertices with parts 
$V_1,\ldots,V_k$ of sizes ${|V_1| = \cdots = |V_{k-1}| = \lfloor \alpha_k n \rfloor}$ 
and $|V_k| = n-(k-1)\lfloor \alpha_k n \rfloor$,
where
\begin{align}
\alpha_k = \frac{1}{k}\left(1+\sqrt{1-\frac{k}{k-1}x}\right). \notag
\end{align}
Moreover, $H^\star(n, 0)$ and $H^\star(n , 1)$ denote the empty and the complete graph 
on $n$ vertices. 
\end{constr}

One checks immediately that $\lim_{n\to\infty} \rho(H^\star(n,x))=x$ holds for 
every $x\in [0, 1]$. Consequently, for every $r\ge 2$ the function 
$g_r(x)=\lim_{n\to\infty} \rho(K_r, H^\star(n,x))$ is an upper bound on $i(K_r, x)$.

A more explicit description of $g_r$ is as follows. Clearly $g_r(x)=0$ holds for 
every $x\le \frac{r-2}{r-1}$ and $g(1)=1$. If $x \in \bigl(\frac{r-2}{r-1}, 1\bigr)$ 
there exists a unique integer $k\ge r$ 
such that $x\in \bigl(\frac{k-2}{k-1}, \frac{k-1}{k}\bigr]$ and a short calculation 
reveals 
\begin{align}
g_{r}(x)
= \frac{(k)_{r}}{k^r} \left(1+\sqrt{1-\frac{k}{k-1}x}\right)^{r-1}\left(1-(r-1)\sqrt{1-\frac{k}{k-1}x}\right), \notag
\end{align}
where $(k)_{r} = k(k-1)\cdots (k-r+1)$. Lov\'asz and Simonovits conjectured in the seventies 
that this function coincides with $i(K_r,x)$ and the third author proved that this is indeed 
the case. 

\begin{theorem}[Clique density theorem, Reiher~\cite{Re16}]~\label{thm:cdt}
For all integers $r \ge 3$ and real $x\in[0, 1]$ we have $i(K_r,x) = g_{r}(x)$. \qed
\end{theorem}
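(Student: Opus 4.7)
The upper bound $i(K_r,x) \le g_r(x)$ is immediate from Construction~\ref{con:cdt}, since each $H^\star(n,x)$ has edge density tending to $x$ and induced $K_r$-density tending to $g_r(x)$. The content of the theorem is therefore the matching lower bound $i(K_r,x) \ge g_r(x)$, which I plan to establish through a local-perturbation analysis in the setting of vertex-weighted graphs (equivalently, graphons supported on finitely many atoms), following the broad strategy of Reiher's original proof.

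First I would pass to a limit model: by a standard compactness argument, any graph sequence realizing the infimum $i(K_r,x)$ yields a vertex-weighted graph $(H,\mu)$ with $\mu$ a probability measure on $V(H)$, weighted edge density $x$, and weighted induced $K_r$-density equal to $i(K_r,x)$. I would then study infinitesimal mass transfers that preserve the edge density: shifting a small amount of weight between two vertices with a given adjacency pattern alters the edge and $K_r$-densities by computable linear expressions in the weights, and the requirement that any such edge-preserving move not decrease the $K_r$-density yields a large system of linear inequalities that any minimizer must satisfy. These inequalities form the workhorse of the argument.

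The main obstacle---and the heart of Reiher's proof---is a structural reduction showing that any extremal $(H,\mu)$ is in fact complete multipartite on the support of $\mu$; that is, non-adjacency there is an equivalence relation. Suppose for contradiction that $u,v$ are non-adjacent vertices of positive weight. If their ``weighted neighborhood profiles'' in $H$ are proportional in an appropriate sense, the linear inequalities above allow one to concentrate mass onto a single vertex, strictly reducing the number of atoms without changing the densities and contradicting minimality. If the profiles differ, a two-stage perturbation is needed: one first performs a mass swap equalizing certain weighted $K_{r-1}$-counts in the common neighborhood, and then exploits the recurrence $r\, N(K_r,G) = \sum_v N(K_{r-1},G[N(v)])$ to derive a contradiction. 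This second stage is driven by an induction on $r$, with base case $r=3$ supplied by Razborov's theorem~\cite{Ra08}, and is the most technically demanding part of the plan.

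Once $(H,\mu)$ is known to be complete multipartite with part masses $\alpha_1,\dots,\alpha_k$, the problem reduces to the finite-dimensional optimization of minimizing $r!\sum_{i_1 < \cdots < i_r}\alpha_{i_1}\cdots\alpha_{i_r}$ subject to $\sum_i \alpha_i = 1$ and $1-\sum_i \alpha_i^2 = x$. A Lagrange multiplier computation, together with a convexity check in the variables $\alpha_i$, shows that the minimum is attained when $k-1$ of the $\alpha_i$ are equal and the last one absorbs the remainder, with $k$ forced to be the unique integer satisfying $x \in \bigl(\tfrac{k-2}{k-1},\tfrac{k-1}{k}\bigr]$. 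This recovers precisely the configuration $H^\star(n,x)$ from Construction~\ref{con:cdt} and thus yields $i(K_r,x) \ge g_r(x)$, completing the proof.
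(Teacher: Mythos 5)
The paper does not prove this theorem; it cites it as Reiher's prior result~\cite{Re16} (note the terminal~\qed\ attached directly to the statement, signalling ``see the reference''). So there is no in-text proof to compare your proposal against. What can be assessed is the proposal on its own terms, and there are genuine gaps.

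The central gap is the structural reduction asserting that any extremal vertex-weighted graph is complete multipartite on the support of~$\mu$. You acknowledge this as ``the heart of Reiher's proof'' but the sketch you give---proportional versus non-proportional weighted neighbourhood profiles, a ``two-stage perturbation'' first equalizing some $K_{r-1}$-counts and then invoking the recurrence $r\,N(K_r,G)=\sum_v N(K_{r-1},G[N(v)])$---does not describe a procedure whose correctness can be verified. In the non-proportional case you say a contradiction is ``derived'', but the manner in which one actually gets a strict decrease (or a reduction in the number of atoms) after the first perturbation is exactly what the theorem's difficulty consists in, and it is not a routine consequence of the recurrence. Asserting the structure is not proving it; as written this step would not survive scrutiny.

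Second, the claim that the argument inducts on $r$ with Razborov's triangle theorem as the base case mischaracterizes Reiher's proof, which is a self-contained argument handling all $r\ge3$ simultaneously and does not rest on Razborov's flag-algebra proof of the $r=3$ case. If you intend a genuinely different inductive architecture from the original, you would need to show that the inductive step actually closes, and the sketch gives no evidence that it does.

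Finally, the concluding finite-dimensional optimization over part masses $\alpha_1,\dots,\alpha_k$ is presented as ``a Lagrange multiplier computation, together with a convexity check.'' The feasible set here is not convex, the objective is a degree-$r$ symmetric polynomial, and the claimed extremal profile (with $k-1$ equal parts and one absorbing the remainder, $k$ determined by the interval containing $x$) requires a careful case analysis already nontrivial in the earlier literature (cf.~Bollob\'as~\cite{Bo76}, which handles only the piece-wise linear lower envelope). Presenting it as a routine computation glosses over real content.
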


The non-asymptotic problem to determine for given natural numbers~$n$ and~$m$ 
the exact minimum number of $r$-cliques an $n$-vertex graph with $m$ edges needs to contain is 
still wide open in general. But for triangles there has recently been spectacular progress by 
Liu, Pikhurko, and Staden~\cite{LPS}.

Easy calculations show that the function $g_r(x)$ is non-differentiable at the 
critical values $x=1-1/q$, where $q\ge r-1$ denotes an integer. Moreover, $g_r(x)$
is piecewise concave between any two consecutive critical values. 
An old result of Bollob\'{a}s~\cite{Bo76} (proved long before the clique density theorem)
asserts that the piece-wise linear function interpolating between the critical values 
of $g_r(x)$ is a lower bound on $i(K_r, x)$. Here we extend this result to quantum graphs 
whose constituents are complete multipartite graphs.

To state this generalization we need the following concepts.  
For every positive integer $r\ge 2$ and every quantum graph~$Q$ we 
define the {\em complete $r$-partite feasible region} $\Omega_{{\rm ind-r}}(Q)$  
to be the collection of all points in $[0,(r-1)/r]\times \RR$ that can be realized 
by a $Q$-good sequence $\left(G_{n}\right)_{n=1}^{\infty}$
of complete $r$-partite graphs (isolated vertices are not allowed).
For  $x\in [0,(r-1)/r]$, let
\[
	i_{r}(Q,x) = \inf\{y\colon (x,y)\in \Omega_{{\rm ind-r}}(Q)\} 
	\quad{\rm and} \quad
	I_{r}(Q,x) = \sup\{y\colon (x,y)\in \Omega_{{\rm ind-r}}(Q)\}
	\,.
\]
Optimizing over $r$ we put 
\[
	m(Q,x) = \inf\left\{i_{r}(Q,x)\colon r\ge \left\lceil \frac{1}{1-x} \right\rceil\right\}
	\quad{\rm and} \quad
	M(Q,x)  = \sup\left\{I_{r}(Q,x)\colon r\ge \left\lceil \frac{1}{1-x} \right\rceil\right\} 	
\]
for every quantum graph $Q$ and every real $x\in [0, 1)$ as well as 
\[
	m(Q, 1)=M(Q, 1)=\lim_{n\to\infty}\rho(Q, K_n)\,.
\]
Clearly, we have 
\[
	i(Q,x) \le m(Q,x) \le M(Q,x) \le I(Q,x)\,.
\]

Next we observe that for every bounded function $f\colon [0,1]\longrightarrow \RR$ there exist 
a point-wise minimum concave function $\capp(f)\ge f$ and, similarly, a maximum convex
function $\cupp(f)\le f$. In fact, $\capp(f)$ is given by  
\[
	\capp(f)(x)=\sup\Bigl\{\lambda_1f(x_1)+\dots+\lambda_nf(x_n)\colon 
		n\ge 1,
		(\lambda_1, \dots, \lambda_n)\in\Delta_{n-1}, 
		\text{ and }
		\sum_{i=1}^n \lambda_i x_i=x \Bigr\}
\]
for all $x\in [0, 1]$, where 
\[
	\Delta_{n-1}
	=
	\bigl\{(\lambda_1, \dots, \lambda_n)\in [0, 1]^n\colon 
	\lambda_1+\dots+\lambda_n=1\bigr\}
\]
denotes the $(n-1)$-dimensional standard simplex. Moreover, replacing the supremum 
by an infimum one obtains a formula for $\cupp(f)(x)$. 

\begin{theorem}\label{THM:inducibility-multi-partite-all}
Let $Q = \sum_{i=1}^m\lambda_i F_i$ be a quantum graph all of whose constituents 
are complete multipartite graphs. 
\begin{enumerate}[label=\alabel]
	\item\label{it:110a} If every $F_i$ with $\lambda_i > 0$ is complete, then 
		\begin{align*}
				i(Q,x) \ge \cupp\left(m(Q,\cdot)\right)(x)
				\quad \text{ for all } x\in[0,1]\,. 
		\end{align*}
	\item\label{it:110b} If every $F_i$ with $\lambda_i < 0$ is complete, then 
		\begin{align*}
				I(Q,x) \le \capp\left(M(Q,\cdot)\right)(x) 
				\quad \text{ for all } x\in[0,1]\,. 
		\end{align*}
\end{enumerate}
\end{theorem}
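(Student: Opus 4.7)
The plan is to derive part~\ref{it:110a} by a Zykov-style symmetrization argument and to obtain part~\ref{it:110b} by applying~\ref{it:110a} to $-Q$ (whose negative coefficients are then on the cliques) and invoking Fact~\ref{FACT:complement-minus-inducibility}\ref{it:13a}. Given a graph~$G$ and a non-edge $\{u,v\}$, let $G_u$ (respectively $G_v$) denote the graph obtained by reassigning to~$v$ the neighborhood $N_G(u)$ (respectively to~$u$ the neighborhood $N_G(v)$), so that $u,v$ become twins. A direct count yields the exact identity $\rho(G)=\tfrac12(\rho(G_u)+\rho(G_v))$, and the key technical lemma states that for every complete multipartite graph~$F$,
\[
	\rho(F,G)\le\tfrac12\bigl(\rho(F,G_u)+\rho(F,G_v)\bigr),
\]
with equality whenever $F$ is complete.

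To prove the lemma, I would split the induced copies of~$F$ according to their intersection with~$\{u,v\}$. Copies disjoint from $\{u,v\}$ contribute equally to all three graphs. Copies meeting $\{u,v\}$ in exactly one vertex are handled via the natural isomorphism $G_u-u\cong G-v$ (sending $v\mapsto u$) and its $G_v$-analogue; denoting by $N_u$ (resp.\ $N_v$) the number of induced copies of~$F$ in $G-v$ containing~$u$ (resp.\ in $G-u$ containing~$v$), they contribute $N_u+N_v$ to $N(F,G)$, $2N_u$ to $N(F,G_u)$, and $2N_v$ to $N(F,G_v)$, so the averaging identity holds exactly on this piece. The decisive case concerns copies through both $u$ and~$v$. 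Since $uv\notin E(G)$, in any such copy $u$ and $v$ must lie in the same partition class of~$F$ (this is the only place the multipartite structure of~$F$ is used), so the remaining $v(F)-2$ vertices split into a same-class piece contained in $V(G)\setminus(N_G(u)\cup N_G(v)\cup\{u,v\})$ and a cross-class piece contained in $N_G(u)\cap N_G(v)$. Both containments relax in~$G_u$ and in~$G_v$, so every copy of this type in~$G$ reappears in both $G_u$ and~$G_v$, yielding the desired inequality; if $F$ is complete then no such copies exist at all, and equality holds.

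Under hypothesis~\ref{it:110a}, the equality for the clique constituents and the inequality for the remaining (complete multipartite) constituents---whose negative coefficients reverse the direction of the inequality upon multiplication---combine to give $\rho(Q,G)\ge\tfrac12(\rho(Q,G_u)+\rho(Q,G_v))$. Iterating this on non-twin non-edges of the current graph and choosing $G_u$ or $G_v$ with probability~$\tfrac12$ at each step, Zykov's classical argument shows that after finitely many steps we reach a complete multipartite graph. This produces a probability distribution on complete multipartite graphs~$H$ with $V(H)=V(G)$ satisfying $\EE\,\rho(H)=\rho(G)$ and $\EE\,\rho(Q,H)\le\rho(Q,G)$.

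To conclude, I would use the representation
\[
	\cupp(m(Q,\cdot))(x)=\sup\bigl\{\ell(x)\colon \ell\text{ is affine and } \ell\le m(Q,\cdot) \text{ on } [0,1]\bigr\}.
\]
Fix such an~$\ell$ together with a $Q$-good sequence $(G_n)$ realizing the point $(x,i(Q,x))$. For every complete $r$-partite graph~$H$ on~$n$ vertices, the blow-up sequence $(H^{(t)})_{t\ge 1}$ lies in the complete $r$-partite feasible region, giving $\rho(Q,H)\ge m(Q,\rho(H))+O(1/n)$ with error uniform in~$H$; since $\ell$ is Lipschitz, this upgrades to $\rho(Q,H)\ge\ell(\rho(H))+O(1/n)$. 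Taking expectation under the distribution obtained from~$G_n$ and using linearity of~$\ell$ yields $\rho(Q,G_n)\ge\ell(\rho(G_n))+O(1/n)$; letting $n\to\infty$ and then taking the supremum over~$\ell$ completes part~\ref{it:110a}. The most delicate point is the key lemma: the monotone-neighborhood argument relies crucially on~$F$ being complete multipartite (it fails already for induced paths), which is why the structural hypothesis on the non-clique constituents cannot be weakened. Using affine~$\ell$ in the final step also neatly bypasses possible discontinuities of $m(Q,\cdot)$ and absorbs the $O(1/n)$ corrections into $\ell$'s Lipschitz constant.
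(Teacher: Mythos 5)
Your strategy closely parallels the paper's, but where the paper simply cites the Schelp--Thomason theorem you re-derive the needed symmetrization principle from scratch. The key lemma $\rho(F,G)\le\tfrac12\bigl(\rho(F,G_u)+\rho(F,G_v)\bigr)$ for complete multipartite~$F$ (with equality for cliques) is correct, and your three-case decomposition of the induced copies by their intersection with $\{u,v\}$ is exactly right; in particular, you correctly isolate the role of the multipartite hypothesis, namely that a copy of~$F$ through both $u$ and~$v$ must place them in the same part. Your use of affine minorants of $m(Q,\cdot)$ plays the same role as the paper's tangent line $L(x)=kx+p$ of $\capp(M(Q,\cdot))$ combined with the shift $Q^\star=Q-kK_2$; these are essentially dual formulations of the same linearization.

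There are two gaps. First, you assert that the probability-$\tfrac12$ random Zykov iteration reaches a complete multipartite graph after finitely many steps, appealing to ``Zykov's classical argument.'' But the classical argument is deterministic and driven by a monovariant, whereas the random walk has none in sight: symmetrizing at $\{u,v\}$ makes them twins, but a later symmetrization at $\{v,w\}$ changes $v$'s neighbourhood again and can un-twin $u,v$, so the twin-class structure may oscillate and the walk could a priori cycle. Almost-sure absorption does hold --- one can exhibit a positive-probability path from any state of this finite chain to an absorbing one --- and then $\EE\rho(H)=\rho(G_n)$ follows by optional stopping since $\rho$ is bounded, but none of this is addressed. The paper sidesteps the entire issue: Schelp--Thomason directly supplies, for each~$n$, a complete multipartite maximizer of $\rho(Q^\star,\cdot)$, and the $kK_2$-shift absorbs the change in edge density, so no expectation is needed. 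Your proof could be repaired in the same spirit by fixing the slope~$k$ of $\ell$ first, always moving to whichever of $G_u,G_v$ minimizes $\rho(Q,\cdot)-k\rho(\cdot)$, and appealing to a terminating deterministic variant of Zykov. Second, the representation $\cupp(f)(x)=\sup\{\ell(x)\colon\ell\text{ affine},\,\ell\le f\text{ on }[0,1]\}$ holds for $x\in(0,1)$ but can fail at $x\in\{0,1\}$, since a convex function on $[0,1]$ may jump up at an endpoint while remaining convex; by the paper's definition one in fact has $\cupp(f)(0)=f(0)$, which need not be dominated by any affine minorant. The paper treats $x\in\{0,1\}$ by a separate one-line computation before passing to tangent lines at interior points, and your argument should do likewise.
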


The aforementioned result of Bollob\'{a}s is the case $Q=K_r$ of 
Theorem~\ref{THM:inducibility-multi-partite-all}\ref{it:110a}.

\subsection{Almost complete graphs}
For every integer $t\ge 3$ we let $K_t^-$ denote the graph obtained from a clique $K_t$
by deleting one edge. As these graphs are neither complete nor empty, 
Proposition~\ref{PROP:lower-bound-is-zero} tells us that the feasible 
regions $\Omega_{\mathrm{ind}}(K_t^-)$ are completely determined by the 
functions $I(K_t^-, x)$. 
For $t=3$ we have the following exact result showing that the graphs $H^\star(n, x)$ minimizing 
the triangle density also maximize the induced $K_3^-$-density. 

\begin{theorem}\label{THM:inducibility-S2}
The equality $I(K_3^-,x) = \frac{3}{2}\left(x - g_3(x)\right)$ holds for all $x\in[0,1]$.
\end{theorem}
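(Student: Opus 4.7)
The plan is to exploit a simple linear identity among the induced densities of the four graphs on three vertices. Double-counting the pairs $(e, S)$ with $e \in E(G[S])$ and $|S| = 3$ yields
\[
3N(K_3, G) + 2N(K_3^-, G) + N(K_2 \dcup K_1, G) = (n-2)\,e(G)
\]
for every $n$-vertex graph $G$, and after dividing by $\binom{n}{3}$ this becomes
\[
3\rho(K_3, G) + 2\rho(K_3^-, G) + \rho(K_2 \dcup K_1, G) = 3\rho(G).
\]

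For the upper bound on $I(K_3^-, x)$ I would simply drop the nonnegative term $\rho(K_2 \dcup K_1, G)$ to obtain
\[
\rho(K_3^-, G) \le \tfrac{3}{2}\bigl(\rho(G) - \rho(K_3, G)\bigr),
\]
and then invoke the clique density theorem (Theorem~\ref{thm:cdt}) to conclude $I(K_3^-, x) \le \tfrac{3}{2}(x - g_3(x))$.

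For the matching lower bound the crux is the observation that the extremal graphs $H^\star(n, x)$ of Construction~\ref{con:cdt} are complete multipartite, and no complete multipartite graph contains an induced copy of $K_2 \dcup K_1$: an edge $uv$ forces $u$ and $v$ into distinct parts, and then any common non-neighbour $w$ would simultaneously have to share a part with $u$ and with $v$, which is impossible. Hence $\rho(K_2 \dcup K_1, H^\star(n, x)) = 0$; since $\rho(H^\star(n, x)) \to x$ and $\rho(K_3, H^\star(n, x)) \to g_3(x)$ by design of the construction, the displayed identity yields $\rho(K_3^-, H^\star(n, x)) \to \tfrac{3}{2}(x - g_3(x))$, realising the claimed value.

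Every step is elementary modulo the clique density theorem, so there is no real obstacle; the charm of the statement lies in the observation that the minimisers of the triangle density automatically maximise the $K_3^-$-density, a coincidence driven entirely by the complete multipartiteness of the $H^\star(n, x)$ and their consequent avoidance of the third three-vertex graph $K_2 \dcup K_1$.
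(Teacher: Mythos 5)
Your proof is correct and is essentially the same as the paper's: both start from the double-count $(n-2)e(G)=N(\overline{K_3^-},G)+2N(K_3^-,G)+3N(K_3,G)$ (note $\overline{K_3^-}=K_2\dcup K_1$), rearrange to express $\rho(K_3^-,G)$ in terms of $\rho(G)$, $\rho(K_3,G)$, and the nonnegative error term $\rho(K_2\dcup K_1,G)$, apply the clique density theorem for the upper bound, and use the complete multipartite graphs $H^\star(n,x)$ — which induce no $K_2\dcup K_1$ — for the matching lower bound.
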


\begin{figure}[htbp]
\centering
\begin{tikzpicture}[xscale=6.5,yscale=5]
\draw [->] (0,0)--(1.1,0);
\draw [->] (0,0)--(0,0.85);

\draw [line width=0.5pt,dash pattern=on 1pt off 1.2pt,domain=0:1] plot(\x,{3/4});
\draw [line width=0.5pt,dash pattern=on 1pt off 1.2pt] (1/2,0)--(1/2,3/4);
\draw [line width=0.5pt,dash pattern=on 1pt off 1.2pt] (2/3,0)--(2/3,2/3);
\draw [line width=0.5pt,dash pattern=on 1pt off 1.2pt] (3/4,0)--(3/4,9/16);
\draw [line width=0.5pt,dash pattern=on 1pt off 1.2pt] (4/5,0)--(4/5,12/25);

\draw[line width=0.7pt,color=sqsqsq,fill=sqsqsq,fill opacity=0.25]
(0,0)--(1/2,3/4)
--(0.5,0.75)--(0.51,0.742614)--(0.52,0.735459)--(0.53,0.728545)--(0.54,0.721879)--
(0.55,0.715472)--(0.56,0.709333)--(0.57,0.703476)--(0.58,0.697915)--(0.59,0.692666)--
(0.6,0.687749)--(0.61,0.683188)--(0.62,0.679014)--(0.63,0.675266)--(0.64,0.672)--
(0.65,0.669302)--(2/3,2/3)
--(0.673667,0.656279)--(0.680667,0.646121)--(0.687667,0.636205)--(0.694667,0.626545)--
(0.701667,0.617155)--(0.708667,0.608055)--(0.715667,0.599269)--(0.722667,0.590827)--
(0.729667,0.582772)--(0.736667,0.575167)--(3/4,9/16)
--(0.755,0.553211)--(0.76,0.5441)--(0.765,0.535177)--(0.77,0.526457)--(0.775,0.517955)--
(0.78,0.509692)--(0.785,0.501697)--(0.79,0.494012)--(0.795,0.486712)--(4/5,12/25)
--(5/6,5/12)--(6/7,18/49)--(7/8,21/64)--(8/9,8/27)--(9/10,27/100)--(10/11,30/121)--
(11/12,11/48)--(12/13,36/169)--(13/14,39/196)--(14/15,14/75)
--(15/16,45/256)--(16/17,48/289)--(17/18,17/108)--(18/19,54/361)--(19/20,57/400)--
(20/21,20/147)--(21/22,63/484)--(22/23,66/529)--(23/24,23/192)--(24/25,72/625)--
(25/26,75/676)--(26/27,26/243)--(27/28,81/784)--(28/29,84/841)--(29/30,29/300)--(1,0);

\begin{scriptsize}
\draw [fill=uuuuuu] (1,0) circle (0.2pt);
\draw[color=uuuuuu] (1,0-0.07) node {$1$};

\draw [fill=uuuuuu] (1/2,0) circle (0.2pt);
\draw[color=uuuuuu] (1/2,0-0.07) node {$\frac{1}{2}$};
\draw [fill=uuuuuu] (2/3,0) circle (0.2pt);
\draw[color=uuuuuu] (2/3,0-0.07) node {$\frac{2}{3}$};
\draw [fill=uuuuuu] (3/4,0) circle (0.2pt);
\draw[color=uuuuuu] (3/4,0-0.07) node {$\frac{3}{4}$};
\draw [fill=uuuuuu] (4/5,0) circle (0.2pt);
\draw[color=uuuuuu] (4/5,0-0.07) node {$\frac{4}{5}$};

\draw [fill=uuuuuu] (0,0) circle (0.2pt);
\draw[color=uuuuuu] (0-0.05,0-0.05) node {$0$};

\draw [fill=uuuuuu] (0,3/4) circle (0.2pt);
\draw[color=uuuuuu] (0-0.05,3/4) node {$\frac{3}{4}$};
\draw [fill=uuuuuu] (1/2,3/4) circle (0.2pt);

\draw [fill=uuuuuu] (2/3,2/3) circle (0.2pt);
\draw [fill=uuuuuu] (3/4,9/16) circle (0.2pt);
\draw [fill=uuuuuu] (4/5,12/25) circle (0.2pt);
\draw [fill=uuuuuu] (5/6,5/12) circle (0.2pt);
\draw [fill=uuuuuu] (6/7,18/49) circle (0.2pt);
\draw [fill=uuuuuu] (7/8,21/64) circle (0.2pt);
\end{scriptsize}
\end{tikzpicture}
\caption{$\Omega_{{\rm ind}}(K_3^-)$.}
\label{fig:feasible-region-S2}
\end{figure}

For $t\ge 4$ we show a piecewise linear upper bound on $I(K_t^-, x)$ that yields 
the correct value of the inducibility ${\mathrm{ind}}(K_t^-)$. 
In the statement that follows, we set 
\[
	k(t)=
	\begin{cases}
		\lceil (t+1)(3t-8)/6 \rceil & \text {if } t\ne 5, 8, 11, 14, 17, 20 \cr
		(t-2)(3t+1)/6 & \text {if } t= 5, 8, 11, 14, 17, 20.
	\end{cases} 
\]

\begin{theorem}\label{thm:1643}
	For all $t\ge 4$ and $x\in [0, 1]$ we have $I(K_t^-, x)\le h_t(x)$, where $h_t$
	denotes the piecewise linear function interpolating between $h_t(0)=0$ and  
	\[
		h_t(1-1/r)
		= 
		\binom{t}{2}\frac{(r-1)_{t-2}}{r^{t-1}}
		\quad 
		\text{ for } r\ge k(t)\,.
	\]
	Furthermore, 
	\begin{equation}\label{eq:8648}
		{\mathrm{ind}}(K_t^-)
		=
		\binom{t}{2}\frac{(q(t)-1)_{t-2}}{q(t)^{t-1}},
		\quad 
		\text{ where } q(t) = \lceil (t-2)(3t+1)/6\rceil\,.
	\end{equation}
\end{theorem}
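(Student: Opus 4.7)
The plan is to apply Theorem~\ref{THM:inducibility-multi-partite-all}\ref{it:110b} to $Q = K_t^-$ and then carry out a detailed extremal analysis over complete multipartite graphs. The starting point is the observation that $K_t^-$ is itself complete multipartite: it consists of one part of size $2$ together with $t-2$ singleton parts. Hence $Q := K_t^-$ (with coefficient $+1$) vacuously satisfies the hypothesis of Theorem~\ref{THM:inducibility-multi-partite-all}\ref{it:110b}, yielding $I(K_t^-, x) \le \capp(M(K_t^-, \cdot))(x)$ for every $x \in [0, 1]$. Since $\capp(M(K_t^-, \cdot))$ is by definition the smallest concave majorant of $M(K_t^-, \cdot)$, it suffices to show that $M(K_t^-, x) \le h_t(x)$ pointwise and that $h_t$ is itself concave on $[0, 1]$.

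For a sequence of complete $r$-partite graphs with asymptotic part proportions $p = (p_1, \dots, p_r)\in\Delta_{r-1}$ (with all $p_i > 0$), a direct double count gives $\lim \rho(G_n) = 1 - \sum_i p_i^2$ and
\[
F(p) := \lim_{n\to\infty} \rho(K_t^-, G_n) = \frac{t!}{2}\sum_{i=1}^r p_i^2\, e_{t-2}(p_1,\dots,\wh{p_i},\dots,p_r),
\]
where $e_{t-2}$ denotes the $(t-2)$-nd elementary symmetric polynomial; the balanced case $p = (1/r, \dots, 1/r)$ produces exactly $h_t(1 - 1/r) = \binom{t}{2}(r-1)_{t-2}/r^{t-1}$. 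Thus $I_r(K_t^-, x)$ is the supremum of $F(p)$ over the slice $\sum_i p_i^2 = 1 - x$. The main technical step is to show $I_r(K_t^-, x) \le h_t(x)$ for every admissible $r$ and $x$. I would proceed by Lagrange multipliers: the stationarity equations $\partial F/\partial p_k = \lambda + 2\mu p_k$ (with multipliers $\lambda, \mu$ for the two constraints $\sum_i p_i = 1$ and $\sum_i p_i^2 = 1 - x$) force the positive coordinates of every extremal $p$ to take at most two distinct values, namely $a$ copies of some $\alpha$ and $b$ copies of some $\beta$. In the degenerate sub-cases $\alpha = 0$ or $\beta = 0$ one recovers balanced $b$- or $a$-partite configurations with densities $h_t(1-1/b)$ or $h_t(1-1/a)$. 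Checking directly that $F$ at non-degenerate critical points lies below the secant line joining these two boundary values, and using that $h_t$ coincides with that secant on the relevant interval by the definition of $k(t)$, yields $I_r(K_t^-, x) \le h_t(x)$.

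For the inducibility set $f(r) = \binom{t}{2}(r-1)_{t-2}/r^{t-1}$. The concavity and vertex set of $h_t$ yield $\mathrm{ind}(K_t^-) \le \max_{r \ge k(t)} f(r)$, and the ratio
\[
\frac{f(r+1)}{f(r)} = \frac{r^t}{(r-t+2)(r+1)^{t-1}}
\]
is monotone in $r$ and crosses $1$ exactly once; analyzing this crossing modulo the residue of $t$ modulo $6$ identifies the maximizer as $r = q(t) = \lceil (t-2)(3t+1)/6 \rceil$, and the matching lower bound is witnessed by the Tur\'{a}n graph $T(n, q(t))$.

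The hard part will be the constrained optimization in the second paragraph: proving that extremal weight vectors have at most two distinct values requires a careful analysis of the polynomial gradient of $F$, and the comparison with $h_t$ over the resulting two-parameter family requires showing that the induced density at every such critical point lies on or below the appropriate secant of $h_t$. A secondary challenge is the arithmetic determination of $k(t)$ and, in particular, of the exceptional values $t = 5, 8, 11, 14, 17, 20$; here one must verify precisely which integer points $(1 - 1/r, f(r))$ lie on the upper concave envelope of $\{(0,0)\}\cup\{(1-1/r, f(r))\colon r\ge 1\}$.
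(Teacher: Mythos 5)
Your outline follows the same skeleton as the paper: apply Theorem~\ref{THM:inducibility-multi-partite-all}\ref{it:110b}, analyze complete multipartite graphs with part proportions $p\in\Delta_{r-1}$, and compare against a concave piecewise-linear envelope whose vertices are determined by balanced partitions. Your asymptotic formula $\lim\rho(K_t^-, G_n) = \frac{t!}{2}\sum_i p_i^2 e_{t-2}(\hat p_i)$ and the ratio $f(r+1)/f(r) = r^t/\bigl((r-t+2)(r+1)^{t-1}\bigr)$ are both correct, and the reduction to ``$M(K_t^-,\cdot)\le h_t$ plus $h_t$ concave'' is sound.

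However, the central technical step is a genuine gap. You assert that the Lagrange stationarity condition $\partial F/\partial p_k = \lambda + 2\mu p_k$ ``forces the positive coordinates of every extremal $p$ to take at most two distinct values.'' This does not follow from the equations as stated: $\partial F/\partial p_k$ is a polynomial of degree $t-1$ in all of $p_1,\dots,p_r$, so the stationarity condition is not a fixed univariate polynomial equation in $p_k$, and the ``at most two distinct roots'' heuristic is unjustified. Even granting the two-value structure, you then face the separate (and substantial) comparison of $F(\alpha^a,\beta^b)$ against the appropriate secant of $h_t$ over the two-parameter family, which you leave entirely to ``checking directly.'' The paper avoids both difficulties by (i) decomposing the concave piecewise-linear $h_t$ into its linear pieces $\lambda x + \mu$, so that one only ever needs the inequality $M(K_t^-, x)\le \lambda x + \mu$ against a single affine function (Lemma~\ref{lem:4100}), and (ii) establishing that linear bound not via Lagrange multipliers but via a minimal-counterexample perturbation argument (Lemma~\ref{lem:7240}): restricting the objective to a deformation $(\alpha_1,\alpha_2)\mapsto(\alpha_1+\xi,\alpha_2-\xi)$ shows the objective along this line equals $c_6\alpha_1\alpha_2+c_7$ for some constants $c_6,c_7$, whence either $c_6\ne 0$ (contradicting extremality) or one coordinate can be merged to zero (contradicting minimality of $m$). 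This cleanly bypasses the unproved ``two distinct values'' claim. Your arithmetic for $q(t)$ is correct in outline and matches the paper's Lemma~\ref{lem:8351}, but the main upper bound as proposed is not established.
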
   

For instance, for $t=4$ we have $q(4)=5$ and, hence, ${\mathrm{ind}}(K_4^-)=72/125$. 
This was originally proved by Hirst~\cite{Hirst14}, whose computer assisted argument 
is based on the flag algebra method. Moreover, Theorem~\ref{thm:1643} yields the upper 
bound $I(K_4^-, x)\le 3x/4$ for $x\in [0, 3/4]$. For small values of $x$ we have 
the following stronger bound.

\begin{prop}\label{prop:1648}
	If $x\in [0, 1/2]$, then $I(K_4^-, x)\le 3x^2/2$.
\end{prop}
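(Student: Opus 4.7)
The plan is to upper bound the induced $K_4^-$-count of $G$ by the number of unordered pairs of vertex-disjoint edges in $G$, and then to bound the latter trivially. The key observation is that every induced copy of $K_4^-$ on a vertex set $\{a,b,c,d\}$ with missing edge $\{c,d\}$ contains exactly two unordered pairs of vertex-disjoint edges, namely $\bigl\{\{a,c\},\{b,d\}\bigr\}$ and $\bigl\{\{a,d\},\{b,c\}\bigr\}$. Writing $D(G)$ for the total number of such pairs in $G$, and noting that every vertex-disjoint edge pair lies in a unique $4$-element vertex set while $4$-subsets not inducing $K_4^-$ contribute nonnegatively to $D(G)$, one obtains $2 N(K_4^-, G) \le D(G)$.

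Next, since any two distinct edges of $G$ share at most one vertex, $D(G) = \binom{m}{2} - \sum_v \binom{d(v)}{2}$ where $m = e(G)$; dropping the nonnegative sum gives $D(G) \le \binom{m}{2}$, hence $N(K_4^-, G) \le \binom{m}{2}/2$. For any $K_4^-$-good sequence $(G_n)$ with $v(G_n)\to\infty$ and $\rho(G_n)\to x$, we have $m_n = e(G_n) = x\binom{v(G_n)}{2}(1+o(1))$; using the asymptotic $\binom{v(G_n)}{2}^2/\binom{v(G_n)}{4}\to 6$, dividing the previous inequality by $\binom{v(G_n)}{4}$ and passing to the limit yields $\rho(K_4^-, G_n) \le 3x^2/2 + o(1)$, proving $I(K_4^-, x) \le 3x^2/2$.

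No serious obstacle arises; the entire argument is elementary double counting. The hypothesis $x \le 1/2$ in the statement is in fact not used in the proof itself, but for $x > 1/2$ the bound $3x^2/2$ is dominated by the piecewise linear bound $h_4(x) = 3x/4$ provided on $[0, 3/4]$ by Theorem~\ref{thm:1643}, so the inequality becomes informative only for $x \in [0, 1/2]$.
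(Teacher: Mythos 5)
Your argument is correct and is essentially the paper's own proof: you count, for each induced copy of $K_4^-$, its two pairs of vertex-disjoint edges (the two perfect matchings of $K_4^-$), observe that any unordered pair of edges determines at most one candidate copy, and conclude $N(K_4^-,G)\le \tfrac12\binom{e(G)}{2}$ before passing to densities. The intermediate quantity $D(G)$ you introduce is a harmless repackaging of the same double count, and your side remark that the restriction $x\le 1/2$ only marks where the bound is useful (not where it holds) is also accurate.
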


\begin{figure}[htbp]
\centering
\begin{tikzpicture}[xscale=6,yscale=6]
\draw [->] (0,0)--(1.1,0);
\draw [->] (0,0)--(0,72/125+0.1);

\draw [line width=0.5pt,dash pattern=on 1pt off 1.2pt] (1/2,0)--(1/2,3/8);
\draw [line width=0.5pt,dash pattern=on 1pt off 1.2pt] (0,3/8)--(1/2,3/8);
\draw [line width=0.5pt,dash pattern=on 1pt off 1.2pt] (3/4,0)--(3/4,9/16);
\draw [line width=0.5pt,dash pattern=on 1pt off 1.2pt] (4/5,0)--(4/5,72/125);
\draw [line width=0.5pt,dash pattern=on 1pt off 1.2pt] (5/6,0)--(5/6,5/9);
\draw [line width=0.5pt,dash pattern=on 1pt off 1.2pt] (6/7,0)--(6/7,180/343);
\draw [line width=0.5pt,dash pattern=on 1pt off 1.2pt] (0,72/125)--(1,72/125);

\draw[line width=0.7pt,color=sqsqsq,fill=sqsqsq,fill opacity=0.25]
(0,0)--(0. 03,0.00135)--(0.06,0.0054)--(0.09,0.01215)--(0.12,0.0216)--(0.15,0.03375)--
(0.18,0.0486)--(0.21,0.06615)--(0.24,0.0864)--(0.27,0.10935)--(0.3,0.135)--(0.33,0.16335)--
(0.36,0.1944)--(0.39,0.22815)--(0.42,0.2646)--(0.45,0.30375)--(0.48,0.3456)--(0.5,0.375)
--(3/4,9/16)--(4/5,72/125)--(5/6,5/9)--(6/7,180/343)--
(7/ 8,63/128)--(8/9,112/243)--(9/10,54/125)--(10/11,540/1331)--
(11/12,55/144)--(12/13,792/2197)--(13/14,117/343)--(14/15,364/1125)--
(15/16,315/1024)--(16/17,1440/4913)--(17/18,68/243)--(18/19,1836/6859)--(19/20,513/2000)--(1,0);

\begin{scriptsize}
\draw [fill=uuuuuu] (1,0) circle (0.2pt);
\draw[color=uuuuuu] (1,0-0.07) node {$1$};
\draw [fill=uuuuuu] (0,0) circle (0.2pt);
\draw[color=uuuuuu] (0-0.05,0-0.05) node {$0$};
\draw [fill=uuuuuu] (0,72/125) circle (0.2pt);
\draw[color=uuuuuu] (0-0.05,72/125) node {$\frac{72}{125}$};
\draw [fill=uuuuuu] (3/4,0) circle (0.2pt);
\draw[color=uuuuuu] (3/4,0-0.07) node {$\frac{3}{4}$};
\draw [fill=uuuuuu] (3/4,9/16) circle (0.2pt);
\draw [fill=uuuuuu] (4/5,0) circle (0.2pt);
\draw[color=uuuuuu] (4/5,0-0.07) node {$\frac{4}{5}$};
\draw [fill=uuuuuu] (4/5,72/125) circle (0.2pt);
\draw [fill=uuuuuu] (5/6,0) circle (0.2pt);
\draw[color=uuuuuu] (5/6,0-0.07) node {$\frac{5}{6}$};
\draw [fill=uuuuuu] (5/6,5/9) circle (0.2pt);
\draw [fill=uuuuuu] (6/7,0) circle (0.2pt);
\draw [fill=uuuuuu] (6/7,180/343) circle (0.2pt);
\draw[color=uuuuuu] (1/2,0-0.07) node {$\frac{1}{2}$};
\draw [fill=uuuuuu] (1/2,3/8) circle (0.2pt);
\draw [fill=uuuuuu] (0,3/8) circle (0.2pt);
\draw[color=uuuuuu] (0-0.05,3/8) node {$\frac{3}{8}$};
\end{scriptsize}
\end{tikzpicture}
\caption{$\Omega_{{\rm ind}}(K_{4}^{-})$ is contained in the shaded area above.}
\label{fig:feasible-region-K4-minus}
\end{figure}
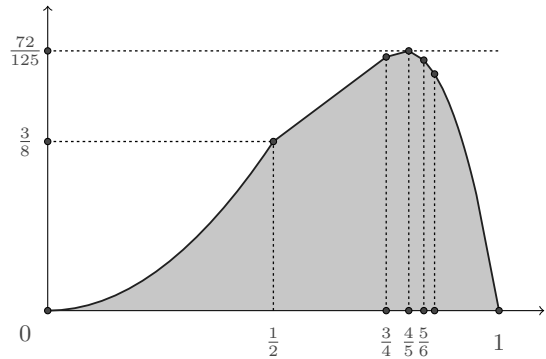

Finally, we remark that our determination of ${\mathrm{ind}}(K_t^-)$ 
in~\eqref{eq:8648} implies
\begin{equation}\label{eq:8751}
	\lim_{t\to\infty} {\mathrm{ind}}(K_t^-)=1/e\,.
\end{equation}

This is closely related to the so-called {\it edge-statistics conjecture} of Alon,  
Hefetz, Krivelevich, and Tyomkyn~\cite{AHKT}. Given positive integers $k$ and 
$\ell<\binom k2$ let the quantum graph $Q_{k, \ell}$ be the sum of all $k$-vertex 
graphs with $\ell$ edges. Alon et al.\ conjectured $\mathrm{ind}(Q_{k, \ell})\le 1/e+o_k(1)$,
where $o_k(1)$ is a quantity tending to zero as $k$ tends to infinity.
They also proved this for some range of $\ell$. Following the work of Kwan, Sudakov, 
and Tran~\cite{KST19}, the edges statistics conjecture was resolved by 
Fox and Sauermann~\cite{FS}
and, independently, by Martinsson, Mousset, Noever, and Truji\'{c}~\cite{MMNT}.
Part of the original motivation for the edges statistics conjecture was the observation 
that for $\ell=1$ we have $Q_{k, 1}=\overline{K^-_k}$ 
and $\mathrm{rand}(\overline{K^-_k}, 1/\binom k2)\to 1/e$ as $k\to\infty$. 
Thus the asymptotic formula~\eqref{eq:8751} follows from the results in~\cites{FS, MMNT}. 
However, the exact 
values $\mathrm{ind}(K_5^-)=525/1024$, $\mathrm{ind}(K_6^-)=178200/13^5$, etc.\
implied by Theorem~\ref{thm:1643} are new. It seems likely that the recent work
by Liu, Pikhurko, Sharifzadeh, and Staden~\cite{LPSS} is relevant to the corresponding
stability problems.

\subsection{Stars}\label{SUBSEC:star}
A second case of asymptotic equality in the edge-statistics conjecture occurs for stars. 
For every positive integer $t$ we denote the star with $t$ edges by $S_t$. As the case 
$S_1=K_2$ is trivial, we 
may assume~$t\ge 2$ in the sequel. A quick calculation shows that the induced $S_t$-density 
of a complete bipartite graph the sizes of whose vertex classes have roughly the ratio $1:t$
is $1/e+o_t(1)$, where again $o_t(1)$ tends to zero as $t$ tends to infinity.  

A precise formula for the inducibility of stars was discovered by 
Brown and Sidorenko~\cite{BS94} (see Theorem~\ref{THM:BS-inducibility-S_t} below).
Here we shall show that for small densities $x$ the values $I(S_t, x)$ of the upper 
bound function of the feasible region are realized by complete bipartite graphs. 
 
Toward this goal we consider for every real $x \in [0,1/2]$ 
a sequence $(B(n,x))_{n=1}^{\infty}$ 
of complete bipartite graphs with $v(B(n,x))=n$ for every $n\in \NN$ 
and $\lim_{n\to\infty}\rho(B(n,x)) = x$. The vertex classes of $B(n,x)$ have the sizes 
$\alpha n$ and $(1-\alpha)n$ for some $\alpha\in [0, 1/2]$ satisfying 
$\alpha(1-\alpha) = x/2 + o(1)$. 
Since $\rho(S_t, B(n, x))=(t+1)\bigl(\alpha(1-\alpha)^t+(1-\alpha)\alpha^t\bigr)+o_n(1)$,
we are led to the function $s_t\colon [0,1/2]\lra \RR$ defined by
\begin{align}\label{equ:St-left-side}
	s_{t}(x) 
	= 
	\lim_{n \rightarrow \infty} \rho(S_t, B(n,x)) 
	= 
	\frac{t+1}{2^{t}} x \left(\left(1-\sqrt{1-2x}\right)^{t-1}+\left(1+\sqrt{1-2x}	
		\right)^{t-1}\right)\,.
\end{align}

As we shall show in Section~\ref{SEC:proof-stars}, there is a unique point $x=x^\star(t)\in [0, 1/2]$ at which $s_t(x)$ attains its maximum. Moreover,
\[
	x^\star(2)
	= 
	x^\star(3) 
	= 
	\frac{1}{2}
	\quad \text{ and } \quad 
	\frac{2t}{(t+1)^2}
	<
	x^\star(t) 
	<
	\frac 2{t+1}
	\text { holds for } t\ge 4\,.
\]

Using Theorem~\ref{THM:inducibility-multi-partite-all} we determine $I(S_t,x)$ 
for $x\in [0,x^\star(t)]$.

\begin{theorem}\label{THM:inducibility-St-left-part}
If $t\ge 2$ is an integer and $x\in [0,x^\star(t)]$, then $I\left(S_{t},x\right) = s_{t}(x)$.
\end{theorem}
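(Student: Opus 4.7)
The lower bound $I(S_t,x)\ge s_t(x)$ is immediate: the sequence $(B(n,x))_{n\ge 1}$ of complete bipartite graphs is $S_t$-good and realises the point $(x,s_t(x))$ by~\eqref{equ:St-left-side}.

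For the upper bound, the plan is to apply Theorem~\ref{THM:inducibility-multi-partite-all}\ref{it:110b} to the quantum graph $Q=S_t$, whose unique constituent $K_{1,t}$ is complete multipartite and whose coefficient is positive, so the hypothesis is vacuously satisfied. This yields
\[
I(S_t,x)\le \capp\bigl(M(S_t,\cdot)\bigr)(x)\qquad\text{for every } x\in[0,1].
\]
It therefore suffices to exhibit a concave function $C\colon[0,1]\to\RR$ with $C\ge M(S_t,\cdot)$ pointwise and $C(x)=s_t(x)$ for every $x\in[0,x^\star(t)]$; by minimality of the concave envelope this gives $\capp(M(S_t,\cdot))\le C$ and hence the theorem on the relevant interval.

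The natural candidate is
\[
C(x)=\begin{cases} s_t(x) & \text{if } x\in[0,x^\star(t)],\\ s_t(x^\star(t)) & \text{if } x\in[x^\star(t),1].\end{cases}
\]
Concavity of $C$ reduces to the concavity and monotonicity of $s_t$ on $[0,x^\star(t)]$, which is amenable to direct calculus using the polynomial expansion $s_t(x)=\frac{(t+1)x}{2^{t-1}}\sum_{j\ge 0}\binom{t-1}{2j}(1-2x)^j$ obtained from~\eqref{equ:St-left-side}. The majorisation $C\ge M(S_t,\cdot)$ on $[x^\star(t),1]$ reduces to $M(S_t,x)\le I(S_t,x)\le \mathrm{ind}(S_t)=s_t(x^\star(t))$, where the last equality is the Brown--Sidorenko formula (Theorem~\ref{THM:BS-inducibility-S_t}), which is attained on a balanced complete bipartite blow-up of vertex ratio matching $x^\star(t)$.

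The main obstacle is then to prove $M(S_t,x)\le s_t(x)$ for $x\in[0,x^\star(t)]$, i.e., that among complete multipartite graphs of edge density $x\le x^\star(t)$ the two-part construction $B(n,x)$ already maximises the induced $S_t$-density in the limit. Unwinding the definition of $M$, this becomes the analytic inequality
\[
(t+1)\sum_{i=1}^{r}(1-\alpha_i)\alpha_i^t\le s_t\Bigl(1-\sum_{i=1}^{r}\alpha_i^2\Bigr)
\]
for every probability vector $(\alpha_1,\ldots,\alpha_r)$ with $1-\sum_i\alpha_i^2\le x^\star(t)$. I would attack it via Lagrange multipliers: at any extremum each positive $\alpha_i$ satisfies the degree-$t$ equation $\alpha_i^{t-1}\bigl(t-(t+1)\alpha_i\bigr)=\mu+2\lambda\alpha_i$, so the optimum has at most $t$ distinct part sizes. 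A careful case analysis on the number and location of these sizes, exploiting the characterisation of $x^\star(t)$ as the argmax of $s_t$ on $[0,1/2]$, should reduce the verification to the two-part case, where the inequality is tautological.
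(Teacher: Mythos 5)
Your overall architecture matches the paper's proof exactly: lower bound from the bipartite sequence $(B(n,x))$, upper bound by applying Theorem~\ref{THM:inducibility-multi-partite-all}\ref{it:110b} to the quantum graph $S_t$, and the choice of the piecewise candidate $C$ (the function $f$ in the paper). The reductions of concavity to the polynomial expansion of $s_t$ and of the majorisation on $[x^\star(t),1]$ to the Brown--Sidorenko inducibility formula are also the same as in the paper.

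The genuine gap is in what you yourself call the ``main obstacle,'' namely showing $M(S_t,x)\le s_t(x)$ for $x\in[0,x^\star(t)]$. You write down the correct analytic inequality
\[
(t+1)\sum_{i=1}^{r}(1-\alpha_i)\alpha_i^t\le s_t\Bigl(1-\sum_{i=1}^{r}\alpha_i^2\Bigr),
\]
but your proposed Lagrange-multiplier attack is not carried out, and as stated it does not close. The stationarity condition bounds the number of \emph{distinct} part sizes by $t$, not the number of parts; a configuration with, say, two distinct sizes repeated with large multiplicities is not excluded, so ``should reduce the verification to the two-part case'' is unjustified. The extra freedom in the multiplicities makes the ensuing case analysis essentially as hard as the original problem, and you give no mechanism for collapsing many equal-size parts into two.

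The paper closes this step differently and cleanly, via the Brown--Sidorenko merging argument (Proposition~\ref{PROP:BS-merge-incrases-bipartite-graphs} and Lemma~\ref{LEMMA:bipartite-is-optiaml}): merging the two smallest parts of a complete multipartite graph \emph{decreases} the number of edges while \emph{increasing} the number of induced copies of $K_{s,t}$. Iterating produces a complete bipartite graph of smaller edge density $x'\le x$ and larger $S_t$-density, and then the monotonicity of $s_t$ on $[0,x^\star(t)]$ (which you do establish) gives $I_r(S_t,x)\le I_2(S_t,x')\le I_2(S_t,x)=s_t(x)$. If you want to rescue your Lagrange-multiplier route, you would need a separate argument that the optimum occurs with at most two nonzero parts, or you should simply invoke the Brown--Sidorenko merging lemma as the paper does.

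Two small remarks. First, you never treat $t=2$; your framework in fact handles it (with $s_2(x)=3x/2$ linear), but Lemma~\ref{lem:1908} is stated only for $t\ge 3$, so you should either cite Theorem~\ref{THM:inducibility-S2} for $t=2$ as the paper does or note that the $t=2$ concavity is trivial. Second, your invocation of the vacuous hypothesis of Theorem~\ref{THM:inducibility-multi-partite-all}\ref{it:110b} is correct: $S_t=K_{1,t}$ is complete multipartite with a single positive coefficient, so the condition on negative-coefficient constituents is empty.
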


Notice that for $t=2$ this tells us $I(K_3^-, x)=3x/2$ for $x\in [0, 1/2]$, which 
follows from Theorem~\ref{THM:inducibility-S2} as well. 
It seems hard to determine $I(S_t,x)$ for $t\ge 3$ and $x\ge x^\star(t)$ (some remarks 
on this problem are given in Section~\ref{SEC:remarks}). 

For future reference it is convenient to extend the definitions of this subsection to 
the trivial case $t=1$ by setting $x^\star(1)=1/2$ and $s_1(x)=x$ for every $x\in [0, 1/2]$ 
(which is one half of the values one would obtain by plugging $t=1$ 
into~\eqref{equ:St-left-side}).
It is then still true that we have $I\left(S_{1},x\right) = s_{1}(x)$ 
for every $x\in [0,x^\star(1)]$ and that equality holds for the 
sequence~$(B(n,x))_{n=1}^{\infty}$ of bipartite graphs. 

\subsection{Complete bipartite graphs}\label{SUBSEC:complete-bipartite-graphs}
For positive integers $s$ and $t$ let $K_{s,t}$ denote the complete bipartite graph
whose vertex classes are of size $s$ and $t$. So $K_{1, t}=S_t$ is a star and 
it turns out that the calculation of $I(K_{s, t}, x)$ reduces to 
$I(S_{|s-t|+1}, x)$ for $x\in [0, x^\star(|s-t|+1)]$. 

\begin{theorem}\label{THM:feasible-reagion-Kst-upper-bound-x-small}
Let $t\ge s \ge 2$ be integers.
Then for every $x\in [0,1]$ we have
\begin{align}
I(K_{s,t}, x)
\le \frac{1}{2^{s-1}(t-s+2)}\binom{s+t}{s}x^{s-1}I(S_{t-s+1},x), \notag
\end{align}
and equality holds for $x\le x^\star(t-s+1)$.
In particular, for $x\in [0,x^\star(t-s+1)]$,
\begin{align}
I(K_{s,t}, x) =
\begin{cases}
 \frac{1}{2^{t}}\binom{2t}{t}x^{t} & {\rm if}\quad t=s,\\
 \frac{1}{2^{t}}\binom{s+t}{s}x^{s}\left(\left(1-\sqrt{1-2x}\right)^{t-s}+\left(1+\sqrt{1-2x}\right)^{t-s}\right) & {\rm if}\quad t > s.
\end{cases} \notag
\end{align}
\end{theorem}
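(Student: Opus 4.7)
The plan is to prove the upper bound via a matching-based double counting that reduces $\rho(K_{s,t},G)$ to $\rho(S_{t-s+1},G)$ and $\rho(G)=x$. For every induced copy of $K_{s,t}$ in a graph $G$ I identify the unordered matchings of $s-1$ edges it contains. Choosing $s-1$ of the $s$ small-side vertices, $s-1$ of the $t$ large-side vertices, and a bijection between them yields $s\cdot t!/(t-s+1)!$ such matchings inside each $K_{s,t}$, and removing the matched vertices always leaves an induced copy of $K_{1,t-s+1}=S_{t-s+1}$ on the remaining $t-s+2$ vertices.

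This gives the identity
\[
	N(K_{s,t},G)\cdot s\cdot\frac{t!}{(t-s+1)!}
	=
	\#\bigl\{(M,T)\colon M \text{ is an unordered $(s-1)$-matching in $G$, $T$ an induced $S_{t-s+1}$ in $G$, $V(M)\cap V(T)=\emptyset$, and $V(M)\cup V(T)$ induces $K_{s,t}$ in $G$}\bigr\}.
\]
Relaxing the cross-adjacency constraints between $M$ and $T$ upper-bounds the right-hand side by (number of unordered $(s-1)$-matchings in $G$) $\cdot\ N(S_{t-s+1},G)$. The former is asymptotically $e(G)^{s-1}/(s-1)!$; dividing the inequality by $\binom{v(G)}{s+t}$, passing to densities, and simplifying the resulting combinatorial factors produces
\[
	\rho(K_{s,t},G)\le \frac{\binom{s+t}{s}}{2^{s-1}(t-s+2)}\,x^{s-1}\,\rho(S_{t-s+1},G)+o(1).
\]
Taking limits along any $K_{s,t}$-good sequence $(G_n)_n$ with $\rho(G_n)\to x$ and using $\limsup_n \rho(S_{t-s+1},G_n)\le I(S_{t-s+1},x)$ then yields the desired upper bound on $I(K_{s,t},x)$.

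For the equality case with $x\le x^\star(t-s+1)$, I would verify by direct computation that the complete bipartite sequence $(B(n,x))_n$ with part sizes $\alpha n$ and $(1-\alpha)n$ satisfying $\alpha(1-\alpha)=x/2$ attains equality. By Theorem~\ref{THM:inducibility-St-left-part} this sequence realizes $I(S_{t-s+1},x)=s_{t-s+1}(x)$, and a routine calculation gives
\[
	\rho(K_{s,t},B(n,x))\longrightarrow \frac{1}{2^t}\binom{s+t}{s}x^s\bigl[(1-\sqrt{1-2x})^{t-s}+(1+\sqrt{1-2x})^{t-s}\bigr]
\]
for $t>s$ (and $\binom{2t}{t}x^t/2^t$ when $t=s$), matching the right-hand side of the claimed inequality.

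The main obstacle is identifying the correct combinatorial coefficient in the relaxation step: a naive version based on ordered oriented edges overcounts by a factor of $2^{s-1}$, so the argument must be phrased in terms of unordered matchings in order to yield the exact coefficient $\binom{s+t}{s}/[2^{s-1}(t-s+2)]$. A minor technicality arises when $t=s$, where $K_{s,s}$ has an additional side-swap automorphism and $S_{t-s+1}=K_2$, but the identity and the asymptotic computation both go through unchanged.
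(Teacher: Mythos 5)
Your proposal is correct and takes essentially the same route as the paper: the key step is Proposition~\ref{PROP:complete-bipartite-graphs-decomposition-upper-bound}, which double-counts decompositions of each induced $K_{s,t}$ into an induced $S_{t-s+1}$ together with $s-1$ induced edges (the paper uses ordered tuples of unoriented edges, you use unordered $(s-1)$-matchings, and the factor $(s-1)!$ you gain by deordering cancels exactly against the $(s-1)!$ in $\binom{e(G)}{s-1}$, so both yield $N(K_{s,t},G)\le\frac{(t-s+1)!}{s!\,t!}N(S_{t-s+1},G)\,e(G)^{s-1}$). The asymptotic simplification, passage to limits via $I(S_{t-s+1},x)$, and verification of equality on the complete bipartite sequence $(B(n,x))$ for $x\le x^\star(t-s+1)$ all match the paper's argument.
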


The remainder of this subsection focuses on the case $s=t=2$. Observe that $K_{2, 2}=C_4$
is a four-cycle. Theorem~\ref{THM:feasible-reagion-Kst-upper-bound-x-small} 
yields $I(C_4, x)= 3x^2/2$ for every $x\in [0, 1/2]$, where equality is achieved by 
the sequence~$(B(n,x))_{n=1}^{\infty}$ of bipartite graphs. For $x\ge 1/2$ we believe 
that~$I(C_4, x)$ is related to the constructions for the clique density theorem (see 
Construction~\ref{con:cdt}).

\begin{conjecture}\label{CONJ:induceibity-K22}\label{conj:cdt}
For every real number $x\in [1/2,1]$ we have 
\begin{align}
I(C_{4},x) = \lim_{n\to \infty}\rho\left(C_{4},H^\star(n, x)\right)\,. \notag
\end{align}
\end{conjecture}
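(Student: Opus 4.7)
The plan is to apply Theorem~\ref{THM:inducibility-multi-partite-all}\ref{it:110b} to $Q=C_4$. Since $C_4=K_{2,2}$ is complete bipartite and its only constituent has coefficient $+1>0$, the hypothesis that every $F_i$ with $\lambda_i<0$ is complete is vacuous, and the theorem yields
\[
	I(C_4, x)\le \capp\bigl(M(C_4, \cdot)\bigr)(x)
	\qquad\text{for every } x\in [0, 1]\,.
\]
Since $\bigl(H^\star(n, x)\bigr)_{n=1}^\infty$ is a $C_4$-good sequence of complete multipartite graphs realising the point $\bigl(x, f(x)\bigr)$, where $f(x)=\lim_{n\to\infty}\rho(C_4, H^\star(n, x))$, the reverse bound $\capp\bigl(M(C_4, \cdot)\bigr)(x)\ge M(C_4, x)\ge f(x)$ is automatic. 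Thus the conjecture reduces to the matching upper bound $\capp\bigl(M(C_4, \cdot)\bigr)(x)\le f(x)$ on $[1/2, 1]$, which splits naturally into computing $M(C_4, \cdot)$ and checking that the result is concave.

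To compute $M(C_4, \cdot)$, I would observe that for a complete $r$-partite graph with part proportions $(\alpha_1, \dots, \alpha_r)\in \Delta_{r-1}$ we have $\rho(G)\to 1-\sum_i\alpha_i^2$ and, since an induced copy of $C_4$ uses exactly two vertices from each of two distinct parts, $\rho(C_4, G)\to 6\sum_{i<j}\alpha_i^2\alpha_j^2 = 3\bigl((1-x)^2-\sum_i\alpha_i^4\bigr)$. Consequently
\[
	I_r(C_4, x)
	=
	3(1-x)^2-3\nu_r(x),
	\qquad\text{where}\qquad
	\nu_r(x)
	=
	\inf\Bigl\{\sum_i\alpha_i^4\colon\alpha\in \Delta_{r-1},\ \sum_i\alpha_i^2=1-x\Bigr\}\,.
\]
Any interior minimiser of $\nu_r(x)$ satisfies $4\alpha_i^3=2\mu\alpha_i+\lambda$, a depressed cubic whose three roots sum to zero; as all $\alpha_i\ge 0$, this forces the $\alpha_i$ to take at most two distinct positive values. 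A case analysis over such two-valued profiles, over all admissible splits of the $r$ parts into the two values, and over all $r\ge \lceil 1/(1-x)\rceil$ should then show that the minimum is attained by the $H^\star$-profile ($k-1$ parts of size $\alpha_k$ and one of size $\beta_k$, with $k=\lceil 1/(1-x)\rceil$), identifying $M(C_4, x)=f(x)$ piecewise on each interval $\bigl((k-2)/(k-1), (k-1)/k\bigr]$.

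Finally, one must verify that $f$ is concave on $[1/2, 1]$, so that $\capp(f)=f$ on this interval. A direct calculation using the explicit formula for $\alpha_k(x)$ shows concavity on each open subinterval $\bigl((k-2)/(k-1), (k-1)/k\bigr)$, and the critical task is to check that the one-sided derivatives of $f$ match appropriately at the breakpoints $x=1-1/k$. The main obstacle I expect is this derivative-matching at the breakpoints, which is reminiscent of --- and presumably at least as delicate as --- the analogous step in Reiher's proof of the clique density theorem; this is presumably the reason the statement is posed as a conjecture rather than a theorem. A secondary difficulty is pinning down the global minimiser among all $r\ge k$ in step two, since a priori both the ``$k-1$ large, one small'' and the ``$k-1$ small, one large'' two-valued profiles are candidates, and one must show that the former always wins.
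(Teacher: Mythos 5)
This statement is a conjecture, not a theorem: the paper offers no proof of it, and its closest result, Theorem~\ref{THM:feasible-reagion-Kst-upper-bound-x-big}, establishes only the weaker upper bound $I(C_4,x)\le 3x(1-x)^2$ on $[1/2,1]$ (with equality only at $x=1-1/k$) via a direct induction on $n$ using Goodman's formula, rather than via Theorem~\ref{THM:inducibility-multi-partite-all}. So you are not reproducing a hidden argument of the paper; you are proposing an attack on an open problem.

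Your plan is the natural one, and the ingredients you list are the right ones. The reduction via Theorem~\ref{THM:inducibility-multi-partite-all}\ref{it:110b} is legitimate (the hypothesis is vacuous for $C_4$), the formula $\rho(C_4,G)\to 6\sum_{i<j}\alpha_i^2\alpha_j^2=3\bigl((1-x)^2-\sum_i\alpha_i^4\bigr)$ is correct, and the Lagrange argument forcing at most two distinct positive part-sizes in a minimiser of $\sum\alpha_i^4$ is sound. Moreover, the global ``lifting'' issue that $\capp$ is computed over all of $[0,1]$ is harmless here and would not need the delicate analysis you anticipate: any tangent line $L$ to a concave $f$ at $x_0\ge 1/2$ satisfies $L(1/2)\ge f(1/2)=3/8=\mathrm{ind}(C_4)\ge M(C_4,x)$ for all $x$, and $L$ is nonincreasing, so $L\ge M$ on $[0,1/2]$ automatically. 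A back-of-the-envelope check of the one-sided derivatives of $f$ at $x=1-1/k$ gives $f'((1-1/k)^-)=-6(k-3)/k^2$ and $f'((1-1/k)^+)=-6(k-1)/k^2$, so the corners do point the right way; the derivative-matching worry you raise, while worth verifying carefully, may therefore be less of an obstacle than you fear. The gap I would flag as the most serious --- and which you somewhat underplay as ``a secondary difficulty'' --- is step two: proving that, among \emph{all} two-valued profiles over \emph{all} admissible $r$, the $H^\star$ profile (with $k-1$ parts at the larger value) globally minimises $\sum\alpha_i^4$ subject to $\sum\alpha_i=1$, $\sum\alpha_i^2=1-x$. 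Unlike the $\sum\alpha_i^3$ optimisation underlying the clique density theorem, there is no off-the-shelf majorisation or Schur-convexity argument that pins this down, and one must in particular rule out profiles with $j$ large and $r-j$ small parts for $2\le j\le r-2$ as well as profiles with more than $k$ parts. Until that lemma is supplied, the proposal remains a plausible plan rather than a proof, and the conjecture remains open.
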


This conjecture predicts $I(C_4, 1-1/k)=3(k-1)/k^3$ for every integer $k\ge 2$ 
and our next result shows that this is indeed the case.

%
%
%
%

\begin{figure}[htbp]
\centering
\begin{tikzpicture}[xscale=6,yscale=6]
\draw [->] (0,0)--(1.1,0);
\draw [->] (0,0)--(0,0.6);
\draw (0,0.5)--(1,0.5);
\draw (1,0)--(1,0.5);

\draw [line width=0.5pt,dash pattern=on 1pt off 1.2pt,domain=0:1] plot(\x,{3/8});
\draw [line width=0.5pt,dash pattern=on 1pt off 1.2pt] (1/2,0)--(1/2,3/8);
\draw [line width=0.5pt,dash pattern=on 1pt off 1.2pt] (2/3,0)--(2/3,2/9);
\draw [line width=0.5pt,dash pattern=on 1pt off 1.2pt] (0,2/9)--(2/3,2/9);
\draw [line width=0.5pt,dash pattern=on 1pt off 1.2pt] (0,9/64)--(3/4,9/64);
\draw [line width=0.5pt,dash pattern=on 1pt off 1.2pt] (3/4,0)--(3/4,9/64);

\draw[line width=0.7pt,color=sqsqsq,fill=sqsqsq,fill opacity=0.25]
(0.,0.)--(0.025,0.0009375)--(0.05,0.00375)--(0.075,0.0084375)--(0.1,0.015)--
(0.125,0.0234375)--(0.15,0.03375)--(0.175,0.0459375)--(0.2,0.06)--(0.225,0.0759375)--
(0.25,0.09375)--(0.275,0.113438)--(0.3,0.135)--(0.325,0.158438)--(0.35,0.18375)--
(0.375,0.210938)--(0.4,0.24)--(0.425,0.270938)--(0.45,0.30375)--(0.475,0.338438)--(0.5,0.375)--
(0.5,0.375)--(0.525,0.355359)--(0.55,0.334125)--(0.575,0.311578)--(0.6,0.288)--(0.625,0.263672)--
(0.65,0.238875)--(0.675,0.213891)--(0.7,0.189)--(0.725,0.164484)--(0.75,0.140625)--(0.775,0.117703)--
(0.8,0.096)--(0.825,0.0757969)--(0.85,0.057375)--(0.875,0.0410156)--(0.9,0.027)--(0.925,0.0156094)--
(0.95,0.007125)--(0.975,0.00182812)--(1.,0.);

\begin{scriptsize}
\draw [fill=uuuuuu] (1,0) circle (0.2pt);
\draw[color=uuuuuu] (1,0-0.07) node {$1$};
\draw [fill=uuuuuu] (0,0) circle (0.2pt);
\draw[color=uuuuuu] (0-0.05,0-0.05) node {$0$};
\draw [fill=uuuuuu] (0,3/8) circle (0.2pt);
\draw[color=uuuuuu] (0-0.05,3/8) node {$\frac{3}{8}$};
\draw [fill=uuuuuu] (0,2/9) circle (0.2pt);
\draw[color=uuuuuu] (0-0.05,2/9) node {$\frac{2}{9}$};
\draw [fill=uuuuuu] (3/4,9/64) circle (0.2pt);
\draw [fill=uuuuuu] (0,9/64) circle (0.2pt);
\draw[color=uuuuuu] (0-0.05,9/64) node {$\frac{9}{64}$};
\draw [fill=uuuuuu] (0,1/2) circle (0.2pt);
\draw[color=uuuuuu] (0-0.05,1/2) node {$\frac{1}{2}$};
\draw [fill=uuuuuu] (1/2,3/8) circle (0.2pt);
\draw[color=uuuuuu] (2/3,-0.07) node {$\frac{2}{3}$};
\draw [fill=uuuuuu] (2/3,2/9) circle (0.2pt);
\draw [fill=uuuuuu] (2/3,0) circle (0.2pt);
\draw[color=uuuuuu] (1/2,-0.07) node {$\frac{1}{2}$};
\draw [fill=uuuuuu] (1/2,0) circle (0.2pt);
\draw[color=uuuuuu] (3/4,-0.07) node {$\frac{3}{4}$};
\draw [fill=uuuuuu] (3/4,0) circle (0.2pt);
\end{scriptsize}
\end{tikzpicture}
\caption{$\Omega_{{\rm ind}}(C_{4})$ is contained in the shaded area above.}
\label{fig:feasible-region-C4}
\end{figure}
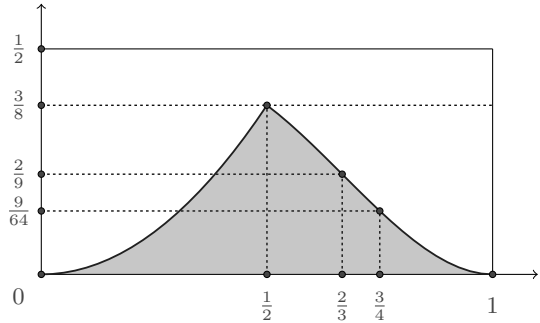


\begin{theorem}\label{THM:feasible-reagion-Kst-upper-bound-x-big}
If $x\in [1/2,1]$, then
\begin{align}
I(C_{4}, x)\le 3x(1-x)^2\,. \notag
\end{align}
Moreover, the bound is tight for all $x \in \left\{ (k-1)/k\colon k \in \NN \text{ and } k \ge 2 \right\}$.
\end{theorem}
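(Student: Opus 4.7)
The plan is to apply Theorem~\ref{THM:inducibility-multi-partite-all}\ref{it:110b} to $Q=C_4$, bound $M(C_4,x)$ from above by Hölder's inequality, and then handle the non-concave portion of $[1/2,1]$ by a direct counting argument.

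Since $C_4=K_{2,2}$ is complete multipartite and $Q=C_4$ has no constituent with negative coefficient, Theorem~\ref{THM:inducibility-multi-partite-all}\ref{it:110b} yields $I(C_4,x)\le\capp(M(C_4,\cdot))(x)$. For a complete $r$-partite graph with part proportions $\alpha_1,\dots,\alpha_r$ satisfying $\sum_i\alpha_i=1$ and $\sum_i\alpha_i^2=1-x$, the induced $C_4$'s are precisely those obtained by choosing two vertices from one part and two from another, so
\[
\rho(C_4)=6\sum_{i<j}\alpha_i^2\alpha_j^2=3\Bigl[(1-x)^2-\sum_i\alpha_i^4\Bigr].
\]
Hölder's inequality applied to $\alpha_i^2=\alpha_i^{4/3}\cdot\alpha_i^{2/3}$ with conjugate exponents $3$ and $3/2$ gives $\sum_i\alpha_i^2\le\bigl(\sum_i\alpha_i^4\bigr)^{1/3}\bigl(\sum_i\alpha_i\bigr)^{2/3}=\bigl(\sum_i\alpha_i^4\bigr)^{1/3}$, whence $\sum_i\alpha_i^4\ge(1-x)^3$ and therefore $M(C_4,x)\le 3x(1-x)^2$. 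Equality in Hölder holds precisely when all nonzero $\alpha_i$ are equal, which forces $x=(k-1)/k$ and is witnessed by the Turán graph $T(n,k)$; a direct count gives $\rho(C_4,T(n,k))\to 3(k-1)/k^3=3x(1-x)^2$, establishing the tightness claim.

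It remains to pass from $M(C_4,\cdot)\le 3x(1-x)^2$ to the asserted upper bound on $I(C_4,x)$. Since $3x(1-x)^2$ is concave on $[1/2,2/3]$, it dominates the concave envelope of $M(C_4,\cdot)$ there and the proposition follows immediately on this subinterval. On $[2/3,1]$, however, $3x(1-x)^2$ is convex and lies strictly below every chord joining consecutive critical points $\bigl((k-1)/k,3(k-1)/k^3\bigr)$, so the concave-envelope bound from Theorem~\ref{THM:inducibility-multi-partite-all}\ref{it:110b} is too weak on its own. For this range I would supplement the multipartite bound with the identity
\[
2N(C_4,G)+N(K_4^-,G)=\sum_{\{u,v\}\in\binom{V(G)}{2}\setminus E(G)}\binom{c(u,v)}{2},
\]
where $c(u,v)=|N(u)\cap N(v)|$, and apply Cauchy--Schwarz on the codegree sum together with the clique density theorem (Theorem~\ref{thm:cdt}) and the $K_4^-$ upper bound of Theorem~\ref{thm:1643} to recover a bound that is tight at every Turán configuration.

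The main obstacle is the convex subinterval $[2/3,1]$: here Theorem~\ref{THM:inducibility-multi-partite-all}\ref{it:110b} alone only produces the piecewise linear interpolant of the critical values, which strictly exceeds $3x(1-x)^2$ between them, so the codegree identity and the clique-density input must be combined very carefully to yield a bound simultaneously tight at every Turán graph.
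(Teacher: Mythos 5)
Your plan has a flaw that already appears on the interval $[1/2,2/3]$, not only on $[2/3,1]$. You argue that since $3x(1-x)^2$ is concave on $[1/2,2/3]$ it dominates $\capp(M(C_4,\cdot))$ there, but the concave envelope is taken over all of $[0,1]$, so it can (and does) use values of $M(C_4,\cdot)$ from outside the subinterval. Concretely, $M(C_4,1/2)=3/8$ and $M(C_4,1)=0$, so any concave function dominating $M(C_4,\cdot)$ satisfies
\begin{equation*}
\capp\bigl(M(C_4,\cdot)\bigr)(3/5)\;\ge\;\tfrac45\cdot\tfrac38+\tfrac15\cdot 0\;=\;\tfrac{3}{10}\;>\;\tfrac{36}{125}\;=\;3\cdot\tfrac35\cdot\bigl(\tfrac25\bigr)^2,
\end{equation*}
and similarly $\capp(M(C_4,\cdot))(2/3)\ge\frac23\cdot\frac38=\frac14>\frac29$. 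Thus Theorem~\ref{THM:inducibility-multi-partite-all}\ref{it:110b} alone cannot yield $I(C_4,x)\le 3x(1-x)^2$ at any interior point of $(1/2,1)$; the obstruction is not confined to the convex piece $[2/3,1]$.

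The Hölder computation of $M(C_4,x)\le 3x(1-x)^2$ for complete multipartite hosts is correct and pleasant, and your codegree identity $2N(C_4,G)+N(K_4^-,G)=\sum_{\{u,v\}\notin E(G)}\binom{c(u,v)}{2}$ is valid, but the proposed second step is only a sketch and its stated direction looks problematic: to bound $N(C_4)$ from above via that identity you would need a \emph{lower} bound on $N(K_4^-)$ (which can be $0$, e.g. in triangle-free graphs), whereas Theorem~\ref{thm:1643} gives an \emph{upper} bound. The paper avoids the multipartite reduction entirely and instead proves the precise finite inequality $N(C_4,G)\le \frac{x(1-x)^2}{8}n^4+2n^3$ by induction on $n$: Goodman's identity (Proposition~\ref{PROP:Goodman-bound-constraint}) together with a Cauchy--Schwarz averaging step produces a vertex $v$ whose neighborhood is dense enough that the contribution of $v$ to $N(C_4,G)$ is controlled, and a Taylor expansion handles the change of density upon deleting $v$. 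This direct argument sidesteps the concave-envelope barrier and gives the bound simultaneously at and between the Turán densities. Your tightness observation at $x=(k-1)/k$ via $T(n,k)$ is correct and matches the paper.
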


%
%
%
%
%

\subsection*{Organization}
For every $x\in \{2, 3, 4, 5, 6\}$ the results stated in Subsection~1.$x$ 
are proved in Section~$x$. 
Section~\ref{SEC:remarks} contains further remarks and open problems.


\section{Proofs of general results}\label{SEC:proof-general-results}
We prove Theorem~\ref{thm:1347} and 
Proposition~\ref{PROP:lower-bound-is-zero}
in this section.
The following result is very similar to~\cite{LM21A}*{Proposition 1.3}.

\begin{proposition}\label{PROP:induced-feasible-region-is-closed}
For every quantum graph $Q$ the set $\Omega_{{\rm ind}}(Q)$ is closed. 
\end{proposition}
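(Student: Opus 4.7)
The plan is to establish closedness by a standard diagonal extraction combined with a compactness argument in $[0,1]^m$. Suppose $(x_k, y_k)$ is a sequence in $\Omega_{\rm ind}(Q)$ converging to some point $(x, y) \in [0,1] \times \RR$; the goal is to produce a single $Q$-good sequence of graphs realizing $(x, y)$.

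First, for each $k$ fix a $Q$-good sequence $\bigl(G_n^{(k)}\bigr)_{n\ge 1}$ realizing $(x_k, y_k)$, and write $\alpha_i^{(k)} = \lim_{n\to\infty} \rho(F_i, G_n^{(k)})$ for every constituent $F_i$ of $Q$; these limits exist by $Q$-goodness. Then I would select, for each $k$, an index $n_k$ large enough so that $v\bigl(G_{n_k}^{(k)}\bigr) \ge k$ and
\[
\bigl|\rho\bigl(G_{n_k}^{(k)}\bigr) - x_k\bigr| < \tfrac{1}{k},
\quad
\bigl|\rho\bigl(Q, G_{n_k}^{(k)}\bigr) - y_k\bigr| < \tfrac{1}{k},
\quad
\bigl|\rho\bigl(F_i, G_{n_k}^{(k)}\bigr) - \alpha_i^{(k)}\bigr| < \tfrac{1}{k}
\text{ for } i\in [m].
\]
Set $H_k = G_{n_k}^{(k)}$. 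Then $v(H_k)\to\infty$, and $\rho(H_k) \to x$ and $\rho(Q, H_k) \to y$ by the triangle inequality, since $(x_k, y_k)\to (x,y)$.

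The one thing left to check is that the sequence $(H_k)_{k\ge 1}$ can be made $Q$-good, i.e.\ that each $\lim_{k\to\infty}\rho(F_i, H_k)$ exists. Since the values $\rho(F_i, H_k)$ all lie in the compact set $[0,1]$ and there are only finitely many constituents, the sequence $\bigl(\rho(F_1, H_k), \dots, \rho(F_m, H_k)\bigr)_{k\ge 1}$ in $[0,1]^m$ has a convergent subsequence by Bolzano--Weierstrass. Passing to this subsequence preserves all the previously established limits, so the resulting sequence of graphs is $Q$-good and realizes $(x, y)$, proving $(x,y)\in\Omega_{\rm ind}(Q)$.

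The argument is essentially soft: there is no genuine obstacle, only the bookkeeping needed to simultaneously arrange convergence of $\rho$, of $\rho(Q,\cdot)$, and of each $\rho(F_i,\cdot)$. The one pitfall to avoid is forgetting that $Q$-goodness requires convergence of $\rho(F_i, \cdot)$ along the diagonal sequence itself (not only of the linear combination $\rho(Q,\cdot)$), which is the reason the final Bolzano--Weierstrass step is needed.
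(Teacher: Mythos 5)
Your proof is correct and follows essentially the same route as the paper's: a diagonal extraction to build a single sequence of graphs whose edge densities and $Q$-densities converge to $(x,y)$, followed by a Bolzano--Weierstrass step in $[0,1]^m$ to pass to a subsequence along which each $\rho(F_i,\cdot)$ converges, ensuring $Q$-goodness. The only cosmetic difference is that your third condition on $n_k$ (closeness of $\rho(F_i, G_{n_k}^{(k)})$ to $\alpha_i^{(k)}$) is superfluous, since the final compactness step handles the constituents regardless.
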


\begin{proof}
	Let $(x, y)\in [0, 1]\in\RR$ be a point belonging to the closure of $\Omega_{{\rm ind}}(Q)$.
	We are to exhibit a $Q$-good sequence $(G_n)_{n=1}^\infty$ of graphs realizing $(x, y)$.
	For every positive integer $n$ we first take a point $(x_n, y_n)\in \Omega_{{\rm ind}}(Q)$
	satisfying $|x_n-x|+|y_n-y|\le 1/n$ and then, using the definition 
	of $\Omega_{{\rm ind}}(Q)$, we take a graph $G_n$ such that 
	\[
		v(G_n)>n, \quad  |\rho(G_n)-x|<1/n, \quad \text{ and } \quad |\rho(Q, G_n)-y|<1/n. 
	\]
	The sequence $(G_n)_{n=1}^\infty$
	thus constructed satisfies $\lim_{n\to\infty} v(G_n)=\infty$, 
	$\lim_{n\to\infty} \rho(G_n)=x$, and $\lim_{n\to \infty} \rho(Q, G_n)=y$. 
	Since $Q$ has only finitely many constituents, some subsequence of $(G_n)_{n=1}^\infty$ 
	is $Q$-good. Every such subsequence realizes~$(x, y)$.
\end{proof}

Therefore the definitions of $i(Q,x)$ and $I(Q,x)$ rewrite as
\begin{align}
i(Q,x) = \min\left\{y\colon (x,y)\in \Omega_{{\rm ind}}(Q)\right\} 
\quad{\rm and}\quad
I(Q,x) = \max\left\{y\colon (x,y)\in \Omega_{{\rm ind}}(Q)\right\}. \notag
\end{align}

Next we show that $\Omega_{{\rm ind}}(Q)$ is determined by $i(Q,x)$ and $I(Q,x)$.

\begin{proposition}\label{PROP:feasible-region-determined-by-upper-lower-bounds}
Let $Q$ be a quantum graph, $x\in [0,1]$ and $y_1 < y_2$.
If $(x,y_1)\in \Omega_{{\rm ind}}(Q)$ and $(x,y_2)\in \Omega_{{\rm ind}}(Q)$,
then $(x,y)\in \Omega_{{\rm ind}}(Q)$ holds for all $y\in [y_1,y_2]$.
\end{proposition}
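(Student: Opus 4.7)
The plan is an edge-swapping interpolation argument. Let $(G_n)_{n=1}^\infty$ and $(H_n)_{n=1}^\infty$ be $Q$-good sequences realizing $(x, y_1)$ and $(x, y_2)$, respectively; I will produce, for every $y\in (y_1, y_2)$, a $Q$-good sequence realizing $(x, y)$.

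First, for each sufficiently large integer $N$ I would produce a pair of graphs $G^{*}_N, H^{*}_N$ on a common $N$-vertex set with $|E(G^{*}_N)|=|E(H^{*}_N)|$, whose edge densities tend to $x$ and whose $Q$-densities tend to $y_1$ and $y_2$ respectively. To arrange this, for each $N$ I would pick indices $n_1=n_1(N), n_2=n_2(N)$ tending to infinity and take blowups of $G_{n_1}$ and $H_{n_2}$ of appropriate multiplicities to land on the common vertex count $N$ (padding by a bounded number of isolated vertices as necessary). A routine calculation shows that for every fixed graph $F$ the induced density $\rho(F, G[k])$ in a $k$-blowup differs from $\rho(F,G)$ by $O_F(1/v(G))$ uniformly in $k$, since the probability that a uniformly random $v(F)$-subset uses two vertices from the same part is $O(v(F)^2/v(G))$. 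Therefore all relevant limits are preserved. Finally, one inserts or deletes at most $o(N^2)$ edges to equalize edge counts; each such single edge modification changes $\rho(Q,\cdot)$ by $O_Q(1/N^2)$, so the total disturbance is $o(1)$.

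Second, the collection of graphs on a fixed vertex set with a fixed edge count is connected under the \emph{edge-swap} operation (delete one edge, add another): pairing up the edges in $E(G^{*}_N)\setminus E(H^{*}_N)$ with those in $E(H^{*}_N)\setminus E(G^{*}_N)$ arbitrarily and performing the swaps in sequence yields a walk $G^{*}_N=F_0, F_1,\dots, F_{\ell_N}=H^{*}_N$ of $N$-vertex graphs, each on the same edge count, with consecutive members differing by a single swap. A single swap affects at most $O(N^{v(F)-2})$ induced copies of any fixed graph $F$ out of $\Theta(N^{v(F)})$ total $v(F)$-subsets, so consecutive values of $\rho(Q, F_i)$ differ by at most $C_Q/N^2$. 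A discrete intermediate value argument then yields some $i_N\in\{0,\dots,\ell_N\}$ for which $|\rho(Q, F_{i_N})-y|=o(1)$ as $N\to\infty$. Since $v(F_{i_N})=N\to\infty$ and $\rho(F_{i_N})\to x$, the sequence $(F_{i_N})_N$ has a subsequence that is $Q$-good (because $Q$ has only finitely many constituents) and realizes $(x,y)$, placing $(x, y)\in \Omega_{\rm ind}(Q)$.

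The main technical point is the coordination carried out in the first step: one must simultaneously preserve the limits of the edge density and of $\rho(F_i,\cdot)$ for every constituent $F_i$ of $Q$ while reducing to graphs with equal vertex and equal edge counts. This is a matter of choosing the blowup multiplicities and letting $N\to\infty$ slowly enough relative to $n_1(N), n_2(N)$; there is no conceptual obstacle, only careful bookkeeping.
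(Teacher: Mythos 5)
Your argument is correct and follows essentially the same route as the paper: put both sequences on a common vertex set, interpolate between the two graphs by single-edge modifications, observe that each such modification changes $\rho(Q,\cdot)$ by only $O_Q(1/N^2)$, and conclude by a discrete intermediate value argument. The paper normalizes the vertex counts by a random sampling argument rather than by blowups, and instead of first equalizing edge counts and then walking by edge swaps it simply adds or deletes single edges while keeping the edge density sandwiched between $\rho(G'_n)$ and $\rho(G''_n)$, which is already enough since both endpoints tend to $x$; both variants work. One small caveat in your writeup: the number of isolated vertices you pad with is not ``bounded'' but merely $o(N)$, and the requisite condition is that $n_1(N), n_2(N)$ tend to infinity slowly relative to $N$, not the other way around.
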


\begin{proof}[Proof of Proposition~\ref{PROP:feasible-region-determined-by-upper-lower-bounds}]
Fix $y\in [y_1, y_2]$. 
Let $\left(G'_{n}\right)_{n=1}^{\infty}$ be a $Q$-good sequence of graphs that 
realizes $(x,y_1)$,
and let $\left(G''_{n}\right)_{n=1}^{\infty}$ be a $Q$-good sequence of graphs that 
realizes $(x,y_2)$.
By a simple probabilistic argument we may assume $V(G'_{n}) = V(G''_{n}) = [n]$ 
for every $n\ge 1$.
We shall construct a sequence of graphs $\left(G_{n}\right)_{n=1}^{\infty}$ 
with $V(G_n)=[n]$ for every $n\ge 1$ that realizes~$(x, y)$.

For fixed $n\ge 1$ we consider a finite sequence of graphs $G^1_n, \dots, G^{m(n)}_n$
with common vertex set $[n]$ which interpolates between $G^1_n=G'_n$ and $G^{m(n)}=G''_n$ 
in the sense that 
\begin{enumerate}
	\item[$\bullet$] for $1\le m<m(n)$ the graph $G^{m+1}_n$ arises from $G^m_n$ by adding 
		or deleting a single edge, 
	\item[$\bullet$] and $\min\{\rho(G'_n), \rho(G''_n)\} \le \rho(G^m_n)\le 
		\max\{\rho(G'_n), \rho(G''_n)\}$ for every $m\in [m(n)]$.
\end{enumerate}	  

Due to the first bullet we have $\rho(Q, G^{m+1}_n)=\rho(Q, G^m_n)+o(1)$ for every
$m\in [m(n)-1]$. Combined with $\rho(Q, G^1_n)=y_1+o(1)$ and $\rho(Q, G^{m(n)}_n)=y_2+o(1)$
this proves that there exists some $k(n)\in [m(n)]$ such that the graph $G_n=G^{k(n)}_n$
satisfies $\rho(Q, G_n)=y+o(1)$. Owing to the second bullet we also have $\rho(G_n)=x+o(1)$. 
\end{proof}

Towards the continuity of $I(Q, x)$ we now establish the following lemma. 

\begin{lemma}\label{LEMMA:inequalities-I(H,x)}
For every quantum graph $Q$ there exist constants $\ell \ge 1$ and $C\ge 0$ such that
for all $x$, $x'$ with $0< x\le x' \le 1$ we have
\begin{align}\label{equ:increase-slow}
\frac{I(Q,x')}{(x')^{\ell}}
\le \frac{I(Q,x)}{x^{\ell}}
    + C \cdot \left(\left(\frac{1}{x}\right)^{\ell}-\left(\frac{1}{x'}\right)^{\ell}\right)\,.
\end{align}
\end{lemma}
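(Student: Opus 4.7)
The plan is to interpret the stated inequality as the assertion that $\phi(x) := (I(Q, x) + C)/x^{\ell}$ is non-increasing on $(0, 1]$---a quick rearrangement confirms the equivalence---and to establish this monotonicity via a disjoint-union construction that converts a near-extremal $Q$-good sequence at edge density $x'$ into one at the smaller edge density $x$, while controlling the decrease in $\rho(Q,\cdot)$.

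I begin with a reduction to the case where every constituent of $Q$ has the same number of vertices $k := \max_i v(F_i)$. This uses the averaging identity
\begin{equation*}
	\rho(F, G)
	= \binom{k}{v(F)}^{-1} \sum_H N(F, H)\, \rho(H, G),
\end{equation*}
in which $H$ ranges over $k$-vertex graphs; this rewrites $Q$ as an equivalent quantum graph $Q' = \sum_H c_H H$ with $v(H) = k$ for every constituent and the same induced-density function as $Q$. I then set $\ell = k/2$ and $C = \sum_H |c_H|$; the case $k \le 1$ is vacuous, as $\rho(Q, G)$ would then be constant.

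For the core construction, fix $0 < x \le x' \le 1$ and, using Proposition~\ref{PROP:induced-feasible-region-is-closed}, choose a $Q$-good sequence $(G_n)$ realising $(x', I(Q, x'))$; after passing to a subsequence, I may assume $\lim_n \rho(F, G_n)$ exists for every graph $F$ on at most $k$ vertices. Now set $H_n := G_n \cup \overline{K}_{m_n}$, with $m_n$ chosen so that $n/(n+m_n) \to \alpha := \sqrt{x/x'}$; a direct computation gives $\rho(H_n) \to x' \alpha^2 = x$. Partitioning induced copies of a $k$-vertex graph $F$ in $H_n$ by the number $j$ of vertices they place in the added independent set, one obtains
\begin{equation*}
	\lim_{n\to\infty} \rho(F, H_n)
	= \sum_{j=0}^{\iota(F)} \binom{k}{j} \alpha^{k-j}(1-\alpha)^j \lim_{n\to\infty} \rho(F^{-j}, G_n),
\end{equation*}
where $\iota(F)$ is the number of isolated vertices of $F$ and $F^{-j}$ denotes $F$ with $j$ of them removed. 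The $j = 0$ term equals $\alpha^k \lim \rho(F, G_n)$, and since $\sum_{j\ge 0}\binom{k}{j} \alpha^{k-j}(1-\alpha)^j = 1$ the remaining contributions are non-negative and sum to at most $1 - \alpha^k$.

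Combining over the signed coefficients $c_H$ of $Q'$ yields $\lim \rho(Q, H_n) \ge \alpha^k I(Q, x') - C(1 - \alpha^k)$. Since $(H_n)$ realises a point of $\Omega_{{\rm ind}}(Q)$ with first coordinate $x$, this forces $I(Q, x) \ge \alpha^k I(Q, x') - C(1 - \alpha^k)$; rearranging gives $(I(Q, x) + C)/x^{\ell} \ge (I(Q, x') + C)/(x')^{\ell}$ with $\ell = k/2$, which is exactly the desired inequality. The main technical point will be the handling of constituents with isolated vertices: the positive correction terms they contribute do not vanish as $\alpha \to 1$, so a cleaner bound of the form $I(Q, x')/(x')^\ell \le I(Q, x)/x^\ell$ cannot hold in general---absorbing these errors is precisely the role of the additive constant $C$.
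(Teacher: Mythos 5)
Your proof is correct, and it rests on the same core construction as the paper's: take a near-extremal $Q$-good sequence at density $x'$ and dilute it by adding a proportion of isolated vertices so that the edge density drops to $x$. The genuine difference is your preliminary normalization via the averaging identity $\rho(F, G) = \binom{k}{v(F)}^{-1}\sum_H N(F,H)\rho(H,G)$, which replaces $Q$ by an equivalent quantum graph whose constituents all have $k = \max_i v(F_i)$ vertices. This buys you the clean hypergeometric decomposition $\lim\rho(F, H_n) = \sum_{j}\binom{k}{j}\alpha^{k-j}(1-\alpha)^j\lim\rho(F^{-j}, G_n)$ with the uniform tail bound $1-\alpha^k$, so the positive- and negative-coefficient constituents are treated symmetrically and the parameters $\ell = k/2$, $C = \sum_H |c_H|$ emerge without further massaging. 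The paper skips this reduction, works directly with constituents of varying sizes $\ell_i$, sets $\ell = \max_i \ell_i/2$ (the same value), and then handles the positive and negative coefficients by two separate ad hoc estimates, arriving at a different but equally serviceable constant $C = \sum_{j\in N}(-\lambda_j)(\ell_j+1)$. One small imprecision: the case $k\le 1$ is not ``vacuous'' but merely trivial --- then $\rho(Q, \cdot)$ equals a constant $c$, so $I(Q, \cdot)\equiv c$ and one may simply take $\ell = 1$ and $C = \max(0, -c)$.
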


\begin{proof}[Proof of Lemma~\ref{LEMMA:inequalities-I(H,x)}]
Fix $0< x \le x' \le 1$, set $\alpha = \left({x'}/{x}\right)^{1/2}-1$, 
and consider a $Q$-good sequence $\left(G'_n\right)_{n=1}^{\infty}$ that 
realizes $(x', I(Q, x'))$. Without loss of generality we may assume $v(G'_n) = n$ 
for every $n\ge 1$. Let $G_n$ be the graph which is the union of $G'_n$ and a set 
of~$\lfloor \alpha n \rfloor$ isolated vertices.
Since
\begin{align}
\rho\left(G_n\right)
= 
\frac{\rho\left(G'_n\right)\binom{n}{2}}{\binom{n+\lfloor \alpha n \rfloor}{2}}
\to 
\frac{x'}{(1+\alpha)^2} = x 
\quad{\rm as}\quad n\to \infty\,, \notag
\end{align}
we have 
\begin{equation}\label{eq:2039}
	I(Q, x)
	\ge
	\limsup_{n\to\infty} \rho(Q, G_n)\,. 
\end{equation}

To estimate the right side we write $Q = \sum_{i\in P}\lambda_iF_i + \sum_{j\in N}\lambda_jF_j$
with $\lambda_i > 0$ for $i\in P$ and $\lambda_j < 0$ for $j\in N$.
Set $\ell_i = v(F_i)$ for every $i\in P\cup N$ and $\ell = \max\{\ell_i/2\colon i\in P\cup N\}$.
For every $i\in P$ the fact that $G'_n$ is a subgraph of $G_n$ yields
\begin{align}\label{equ:lower-bound-fx}
\rho\left(F_i, G_n\right)
\ge 
\frac{\rho\left(F_i,G'_n\right)\binom{n}{\ell_i}}{\binom{n+\lfloor \alpha n \rfloor}{\ell_i}}
\ge 
\frac{\rho\left(F_i,G'_n\right)}{(1+\alpha)^{\ell_i}}
= 
\frac{\rho\left(F_i,G'_n\right)}{(x'/x)^{\ell_i/2}}
\ge 
\frac{\rho\left(F_i,G'_n\right)}{(x'/x)^{\ell}}.
\end{align}
For $j\in N$ we use that every induced copy of $F_j$ in $G_n$ is either already 
contained in $G'_n$ or involves one of the new isolated vertices, which implies 
\[    
	\rho\left(F_j, G_n\right)
	\le 
	\frac{\rho\left(F_j,G'_n\right)\binom{n}{\ell_j}+\alpha n\cdot \binom{n+\lfloor \alpha n \rfloor}{\ell_j-1}}{\binom{n+\lfloor \alpha n \rfloor}{\ell_j}} 
	\le
	\rho(F_j,G'_n)+ \frac{\ell_j \cdot \alpha}{1+\alpha} + o_n(1) \,.
\]
Taking into account that 
\[
	\rho(F_j,G'_n)
	\le
	\frac{\rho\left(F_j,G'_n\right)}{(x'/x)^{\ell}}
	+
	\left(1-\left(\frac{x}{x'}\right)^{\ell}\right)
\]
and 
\[
	\frac{\alpha}{1+\alpha} 
	=
	1-\left(\frac{x}{x'}\right)^{1/2}
	\le
	1-\left(\frac{x}{x'}\right)^{\ell}
\]
we obtain
\[
	\rho\left(F_j, G_n\right)
	\le
	\frac{\rho(F_j, G'_n)}{(x'/x)^{\ell}} 
		+ (\ell_j+1)\left(1-\left(\frac{x}{x'}\right)^{\ell}\right)+ o_n(1)\,.
\]

Combined with~\eqref{equ:lower-bound-fx} this entails
\begin{align*}
	\rho\left(Q, G_n\right)
	& =  
	\sum_{i\in P}\lambda_i\rho\left(F_i,G_n\right) 
		+ \sum_{j\in N}\lambda_j\rho\left(F_j,G_n\right) \\
	& \ge 
	\sum_{i\in P}\lambda_i\frac{\rho\left(F_i,G'_n\right)}{(x'/x)^{\ell}}
      + \sum_{j\in N}\lambda_j\left(\frac{\rho\left(F_j,G'_n\right)}{(x'/x)^{\ell}}
      + (\ell_j+1)\left(1-\left(\frac{x}{x'}\right)^{\ell}\right)\right) - o_n(1)\\
	& \ge 
	\frac{\rho(Q, G'_n)}{(x'/x)^{\ell/2}} 
		 - C \cdot \left(1-\left(\frac{x}{x'}\right)^{\ell}\right)- o_n(1)\,, 
\end{align*}
where $C = \sum_{j\in N}(-\lambda_{j})(\ell_j+1) \ge 0$.
Now~\eqref{eq:2039} reveals
\begin{align*}
I(Q, x)
\ge 
\frac{I(Q, x')}{(x'/x)^{\ell}} - C \cdot \left(1-\left(\frac{x}{x'}\right)^{\ell}\right)
\end{align*}
and upon multiplying both sides by $x^{-\ell}$ the claim follows. 
\end{proof}

For later use we record the following consequence.

\begin{corollary}\label{CORO:I(Q,x)-continuous}
Given a quantum graph $Q$ and $x'\in [0, 1]$, $\eps>0$, there exists some $\delta>0$
such that $I(Q, x)>I(Q, x')-\eps$ holds for all $x\in [0, x')$ with $|x-x'|<\delta$. \qed
\end{corollary}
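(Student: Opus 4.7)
The plan is to derive this corollary as an essentially immediate consequence of Lemma~\ref{LEMMA:inequalities-I(H,x)}. First observe that if $x'=0$ then the interval $[0, x')$ is empty and the statement holds vacuously, so I may assume $x'>0$ and (after possibly shrinking $\delta$ at the end) restrict attention to $x\in(0,x')$.

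For such $x$ the lemma furnishes constants $\ell\ge 1$ and $C\ge 0$ depending only on $Q$ such that
\[
\frac{I(Q,x')}{(x')^\ell} \le \frac{I(Q,x)}{x^\ell} + C\left(\frac{1}{x^\ell} - \frac{1}{(x')^\ell}\right).
\]
Multiplying through by $x^\ell$ and rearranging yields the lower bound
\[
I(Q,x) \ge \left(\frac{x}{x'}\right)^\ell \bigl(I(Q,x')+C\bigr) - C.
\]
The right-hand side is a continuous function of $x$ whose value at $x=x'$ equals $I(Q,x')$. Hence, given $\eps>0$, a routine continuity argument provides some $\delta\in(0,x']$ such that the right-hand side exceeds $I(Q,x')-\eps$ for every $x\in(x'-\delta,\, x')$, which is the desired conclusion.

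I do not foresee any real obstacle, since Lemma~\ref{LEMMA:inequalities-I(H,x)} does all the substantive work and only an elementary one-variable estimate remains. It is worth noting, however, that this corollary only captures the one-sided bound $I(Q,x) > I(Q,x')-\eps$ as $x$ approaches $x'$ from below; the complementary estimates needed to establish the full continuity (and almost everywhere differentiability) of $I(Q,\cdot)$ and $i(Q,\cdot)$ in Theorem~\ref{thm:1347} will have to come from separate arguments, presumably a symmetric application of Lemma~\ref{LEMMA:inequalities-I(H,x)} together with Fact~\ref{FACT:complement-minus-inducibility} to switch between $I$ and $i$ and between the two sides of $x'$.
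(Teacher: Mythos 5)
Your proof is correct and matches the intended derivation: the paper records this corollary without proof as an immediate consequence of Lemma~\ref{LEMMA:inequalities-I(H,x)}, and your algebraic rearrangement followed by the one-sided continuity argument is exactly the natural way to obtain it.
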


Now we are ready to prove the main result of Subsection~\ref{SUBSEC:general-results-intro}.

\begin{proof}[Proof of Theorem~\ref{thm:1347}]
Given a quantum graph $Q$ the formula 
\[
	\Omega_{{\rm ind}}(Q)
	=
	\bigl\{(x, y)\in [0, 1]\times \RR\colon i(Q, x)\le y\le I(Q, x)\bigr\}
\]
follows immediately from Proposition~\ref{PROP:feasible-region-determined-by-upper-lower-bounds}.
Now, due to Fact~\ref{FACT:complement-minus-inducibility}\ref{it:13a} it suffices to show
that~$I(Q, x)$ is continuous and almost everywhere differentiable.

Let $\ell\ge 1$, $C\ge 0$ be the constants provided by Lemma~\ref{LEMMA:inequalities-I(H,x)}.
Owing to~\eqref{equ:increase-slow} the function $F\colon (0, 1]\longrightarrow\RR$ defined 
by $F(x) = \left(I(Q,x)+C\right)/x^{\ell}$ is decreasing. It follows that $F$ is almost 
everywhere differentiable and that for every $x\in (0, 1]$ the left-sided limit 
$\lim_{x\to x_0^{-}}F(x)$ exists. Consequently, the function $I(Q, x)$ has the same properties. 

Let us show next that $I(Q, x)$ is left-continuous. Given an arbitrary $x_0\in (0, 1]$
we already know that the limit $y_0 = \lim_{x\to x_0^{-}}I(Q,x)$ exists.  
Proposition~\ref{PROP:induced-feasible-region-is-closed} 
yields $(x_0, y_0)\in \Omega_{{\rm ind}}(Q)$, whence $I(Q, x_0)\ge y_0$.
But $I(Q, x_0) > y_0$ would contradict Corollary~\ref{CORO:I(Q,x)-continuous}
and thus we have indeed $I(Q, x_0) = y_0$.
By Fact~\ref{FACT:complement-minus-inducibility}\ref{it:13b} the function 
$I(Q, x)=I(\overline{Q}, 1-x)$ is right-continuous as well. This concludes the proof. 
\end{proof}

\begin{proof}[Proof of Proposition~\ref{PROP:lower-bound-is-zero}]
For every $n\in\NN$ and $x\in [0, 1]$ we let $H'(n, x)$ denote the $n$-vertex graph  
consisting of a clique of order $\lfloor x^{1/2}n\rfloor$ and $n-\lfloor x^{1/2}n\rfloor$
isolated vertices. Moreover, we set $H''(n, x)=\overline{H'(n, 1-x)}$. Notice that 
$\lim_{n\to\infty} \rho(H'(n, x))=\lim_{n\to\infty} \rho(H''(n, x))=x$ holds for 
every $x\in [0, 1]$. 

Now suppose that $F$ is a graph which is neither complete nor empty. 
If $F$ has no isolated vertex, then $\rho(F, H'(n, x))=0$ holds for all $n\in \NN$ 
and $x\in [0, 1]$, which leads to $i(F, x)=0$. If $F$ has an isolated vertex 
we get the same conclusion from $\rho(F, H''(n, x))=0$.
\end{proof}

\section{Proof for complete multipartite graphs}\label{SEC:proof-complte-multipartite}
We prove Theorem~\ref{THM:inducibility-multi-partite-all} in this section.
The following result of Schelp and Thomason~\cite{ST98} will be useful in our argument.

\begin{theorem}[Schelp-Thomason \cite{ST98}]\label{THM:ST98}
Let $Q = \sum_{i\in [m]}\lambda_i F_i$ be a quantum graph whose constituents are 
complete multipartite graphs and let $n\in\NN$. If every $F_i$ with $\lambda_i<0$ is 
complete, then 
among all $n$-vertex graphs $G$ maximizing $\rho(Q,G)$ there is a complete multipartite one. 
\end{theorem}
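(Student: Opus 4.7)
The plan is to prove this by a Zykov-style symmetrization argument. Start with any $n$-vertex graph $G$ maximizing $\rho(Q, \cdot)$. Recall that $G$ is complete multipartite if and only if it contains no induced copy of $K_1 \cup K_2$, equivalently, if and only if the relation ``non-adjacent with equal external neighborhoods'' is transitive. If $G$ is not complete multipartite, pick non-adjacent vertices $a, b\in V(G)$ with $N_G(a)\setminus\{b\}\ne N_G(b)\setminus\{a\}$ and consider the two graphs $G^{a\to b}$ (obtained from $G$ by replacing $N(b)$ with $N_G(a)\setminus\{b\}$) and $G^{b\to a}$ (symmetric); in both, the pair $\{a,b\}$ is a non-adjacent twin pair. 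The crucial lemma is
\[
  \rho(Q, G^{a\to b})+\rho(Q, G^{b\to a})\ge 2\,\rho(Q, G),
\]
so at least one of the two symmetrizations is also a maximizer. Each application strictly increases the size of some twin class, so iterating at most $\binom{n}{2}$ times yields a maximizer for which non-adjacency is an equivalence relation, i.e. a complete multipartite graph.

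To establish the lemma I would decompose $N(F_i,\cdot)=\sum_T\Ind[{\cdot}[T]\cong F_i]$ for each constituent according to the intersection $T\cap\{a,b\}$. Subsets $T$ disjoint from $\{a,b\}$ contribute equally in the three graphs. For $T\supseteq\{a,b\}$, whether $G[T]\cong F_i$ depends only on $G[T\setminus\{a,b\}]$ together with the pair of neighborhoods $\bigl(N(a)\cap T,\,N(b)\cap T\bigr)$; for complete multipartite $F_i$ a case analysis on how $\{a,b\}$ fits into the part structure of $F_i$, together with the observation that in both symmetrized graphs $a,b$ are twins, shows via a concavity/averaging argument that the sum of the two symmetrized contributions on such $T$ is at least twice the original. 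The residual contribution comes from subsets $T$ containing exactly one of $\{a,b\}$.

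The main obstacle is controlling these one-sided contributions, and this is precisely where the hypothesis that negative coefficients occur only on complete constituents enters. For $T$ with, say, $a\in T$ and $b\notin T$, the counts in $G$ and $G^{a\to b}$ agree (only $b$'s neighborhood was changed), while the count in $G^{b\to a}$ reflects $a$'s rewritten neighborhood; a symmetric statement holds when $b\in T, a\notin T$. For constituents $F_i$ with $\lambda_i>0$ complete multipartite, the multipartite structure makes the count of induced $F_i$'s using a fixed vertex $v$ an affine function of the indicator vector of $N(v)$ restricted to appropriate blocks, and the required inequality follows by averaging. For constituents $F_i$ with $\lambda_i<0$ assumed complete, the count of induced $F_i$'s using $v$ is simply the number of $(v(F_i)-1)$-cliques in $G[N(v)]$, and a Karamata/convexity comparison between the two neighborhoods $N_G(a)\setminus\{b\}$ and $N_G(b)\setminus\{a\}$ bounds the negative term in the correct direction. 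Assembling these estimates proves the key inequality; combined with the termination argument above, this completes the proof.
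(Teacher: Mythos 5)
First, note that the paper does not actually prove this statement; it is quoted from Schelp--Thomason \cite{ST98}, so your argument has to stand on its own. Your Zykov-style plan and the key inequality $\rho(Q,G^{a\to b})+\rho(Q,G^{b\to a})\ge 2\rho(Q,G)$ are correct, but your account of where the hypothesis ``negative constituents are complete'' enters is misplaced, and as written the lemma is not established. The one-sided contributions need no hypothesis at all: for any constituent $F$, the bijection $T\mapsto (T\setminus\{b\})\cup\{a\}$ satisfies $G^{a\to b}[T]\cong G\bigl[(T\setminus\{b\})\cup\{a\}\bigr]$, so the sets meeting $\{a,b\}$ in exactly one vertex contribute exactly $2A$ to $N(F,G^{a\to b})$, exactly $2B$ to $N(F,G^{b\to a})$, and $A+B$ to $N(F,G)$, where $A$ (resp.\ $B$) is the number of such sets through $a$ (resp.\ $b$) inducing $F$ in $G$; these terms cancel identically, and no ``affine function of the neighborhood indicator'' (which is false for general multipartite constituents) or Karamata comparison is needed. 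Where the sign hypothesis is genuinely needed is the sets $T\supseteq\{a,b\}$: for a complete multipartite $F$ the inequality you want holds pointwise (if $G[T]\cong F$, then the non-adjacent pair $a,b$ lies in one part, so $N_G(a)\cap T=N_G(b)\cap T$ and both clones leave $G[T]$ unchanged), but for a constituent with $\lambda_i<0$ this ``$\ge$'' is the wrong direction; it is precisely completeness that rescues the lemma, because an induced complete graph can never contain the non-adjacent pair $\{a,b\}$ in $G$ or in either clone, so these terms vanish identically for the negative constituents. Your sketch instead asserts the ``$\ge$'' for all multipartite constituents and then invokes the completeness hypothesis on the one-sided terms, which does not fit together.

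The second genuine gap is termination. The claim that ``each application strictly increases the size of some twin class, so at most $\binom{n}{2}$ iterations suffice'' identifies no monotone quantity: a cloning step can destroy other twin pairs and create new non-adjacent non-twin pairs (vertex $b$ loses all its edges to $V\setminus N_G(a)$), so neither the number of bad pairs nor any single class size is monotone, and the iteration as described could in principle cycle. A standard repair within your framework: since $G$ is a global maximizer, neither clone can beat it, so your averaging inequality forces \emph{both} $G^{a\to b}$ and $G^{b\to a}$ to be maximizers, and you get to choose the direction. Then among all $n$-vertex maximizers pick one maximizing the potential $\sum_{u}\bigl|\{v\colon N(v)=N(u)\}\bigr|$; if it is not complete multipartite, take non-adjacent non-twins $a,b$ with the twin class of $a$ at least as large as that of $b$ and pass to $G^{a\to b}$, which strictly increases the potential (twins off $b$ remain twins, and $b$ joins the class of $a$) while preserving maximality --- a contradiction. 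With that fix and the corrected sign-sensitive treatment of the $T\supseteq\{a,b\}$ terms, your approach does prove the theorem.
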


\begin{definition}
Suppose that $H\colon [0,1] \to \RR$ is a concave function
and $L\colon [0,1] \to \RR$ is a linear function.
We say $L$ is a {\it tangent line} of $H$ at $x_0 \in [0,1]$ if $L(x) \ge H(x)$  
holds for $x\in [0,1]$ with equality for $x = x_0$.
\end{definition}

It is easy to see that for every concave function $F\colon [0,1] \to \RR$
and every $x_0\in (0,1)$ there always exists a (not necessarily unique) tangent line of $F$ at $x_0$.

\begin{proof}[Proof of Theorem~\ref{THM:inducibility-multi-partite-all}]
By Fact~\ref{FACT:complement-minus-inducibility}\ref{it:13a} it suffices to 
show part~\ref{it:110b}.
Let $Q = \sum_{i\in [m]}\lambda_i F_i$ be a quantum graph whose constituents are 
complete multipartite graphs such that every~$F_i$ with $\lambda_i < 0$ is complete.
For brevity we set $H(x) = \capp\left(M(Q,\cdot)\right)(x)$  
for every $x\in[0,1]$. Clearly 
\[
	H(0)=M(Q, 0)=\lim_{n\to\infty}\rho(Q, \overline{K_n})=I(Q, 0)
\]
and a similar argument shows $H(1)=I(Q, 1)$. So it remains to prove $H(x_0)\ge I(Q, x_0)$
for every $x_0\in (0,1)$. To this end we choose a tangent line $L(x)=kx+p$ of $H$ at $x_0$,
so that  
\begin{align}\label{equ:F(x)-less-than-L(x)}
	H(x)\le kx+p 
	\quad \text{ for all } x\in [0, 1] 
	\quad \text{ and } \quad 
	H(x_0)=kx_0+p\,.
\end{align}

Now let $\left(G_n\right)_{n=1}^{\infty}$ be a sequence of graphs that 
realizes $(x_0, I(Q, x_0))$. By Theorem~\ref{THM:ST98} applied to the quantum 
graph $Q^\star=Q -k K_2$ there exists for every $n\ge 1$ a multipartite $n$-vertex 
graph $G'_n$ such that $v(G'_n)=v(G_n)$ and 
\begin{equation}\label{eq:0203}
	\rho(Q, G_n)-k\rho(G_n)
	=
	\rho(Q^\star, G_n)
	\le
	\rho(Q^\star, G'_n)
	=
	\rho(Q, G'_n)-k\rho(G'_n)\,.
\end{equation}
By passing to a subsequence of $\left(G'_n\right)_{n=1}^{\infty}$ we may assume that 
the limits $x_1=\lim_{n\to \infty}\rho(G'_n)$ and $y_1=\lim_{n\to \infty}\rho(Q, G'_n)$
exist. Due to the definition of $M(Q, x_1)$ and~\eqref{equ:F(x)-less-than-L(x)} 
we have 
\[
	y_1
	\le 
	M(Q, x_1)
	\le 
	H(x_1)
	\le 
	kx_1+p
\]
and taking the limit $n\to\infty$ in~\eqref{eq:0203} it follows that 
\[
	I(Q, x_0)-kx_0
	\le 
	y_1-kx_1
	\le 
	p\,.
\]
Together with~\eqref{equ:F(x)-less-than-L(x)} this leads to the desired estimate 
$I(Q, x_0)\le kx_0+p=H(x_0)$.
\end{proof}

\section{Proofs for almost complete graphs}\label{SEC:proof-Kt-}

In this section we prove Theorems~\ref{THM:inducibility-S2} and~\ref{thm:1643}
as well as Proposition~\ref{prop:1648}.

\subsection{Cherries}\label{SUBSEC:S2-full}
We begin with the proof of Theorem~\ref{THM:inducibility-S2}.
Consider a graph $G=(V, E)$ with $|V|=n$ vertices. Counting the number of pairs
$(\{x, y\}, z)\in E\times V$ with $z\ne x, y$ in two different ways, we obtain
\[
	(n-2)|E|=N(\overline{K_3^-}, G)+2N(K_3^-, G)+3N(K_3, G)\,.
\]
Dividing by $2\binom n3$ and rearranging we deduce
\[
	\rho(K_3^-, G)
	=
	\frac 32\bigl(\rho(K_2, G)-\rho(K_3, G)\bigr)-\frac12 \rho(\overline{K_3^-}, G)\,.
\]

Therefore the clique density theorem yields for every $x\in [0, 1]$ 
the upper bound $I(K_3^-, x)\le \frac32(x-g_3(x))$. 
Moreover, for every $x\in [0, 1]$
the sequence of multipartite graphs $(H^\star(n,x))_{n=1}^\infty$ 
is $\overline{K_3^-}$-free and establishes the lower bound $I(K_3^-, x)\ge \frac32(x-g_3(x))$. 

\subsection{Piecewise linear upper bounds}
Roughly speaking we show in this subsection that a concave piecewise linear function 
is an upper bound on $I(K_t^-, x)$ if it respects the constraints coming from Tur\'an graphs.

\begin{lemma}\label{lem:7240}
	Suppose that an integer $s\ge 1$ and real numbers $\lambda$, $\mu$ have the 
	property that 
	\begin{equation}\label{eq:7147}
		\frac{1}{r^{s+1}}\binom{r-1}{s}\le \lambda\frac{r-1}{2r}+\mu
	\end{equation}
	holds for every positive integer $r$. 
	If $m\ge 1$ and $(\alpha_1, \dots, \alpha_m)\in \Delta_{m-1}$, then 
	\[
		\sum_{i=1}^m\sum_{W\in\binom{[m]\setminus\{i\}}{s}} 
			\alpha_i^2\prod_{j\in W}\alpha_j
		\le
		\lambda \sum_{\{i, j\}\in\binom{[m]}{2}} \alpha_i\alpha_j +\mu\,.
	\]
\end{lemma}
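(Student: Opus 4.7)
The plan is to show that the function
\[
f(\alpha)=\sum_{i=1}^m\sum_{W\in\binom{[m]\setminus\{i\}}{s}}\alpha_i^2\prod_{j\in W}\alpha_j\;-\;\lambda\sum_{\{i,j\}\in\binom{[m]}{2}}\alpha_i\alpha_j\;-\;\mu
\]
is $\le 0$ on the compact simplex $\Delta_{m-1}$. The hypothesis~\eqref{eq:7147} is precisely the statement $f(\alpha)\le 0$ when $\alpha$ is uniform on some $r$-set, so it suffices to locate a maximizer of $f$ whose positive coordinates are all equal. My first step would be to rewrite the first sum more symmetrically as
\[
L(\alpha):=\sum_{i=1}^m\sum_{W\in\binom{[m]\setminus\{i\}}{s}}\alpha_i^2\prod_{j\in W}\alpha_j=\sum_{S\in\binom{[m]}{s+1}}\Bigl(\sum_{k\in S}\alpha_k\Bigr)\prod_{k\in S}\alpha_k,
\]
obtained by setting $S=\{i\}\cup W$ and summing the distinguished coordinate $\alpha_i$ into $\sum_{k\in S}\alpha_k$.

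The heart of the argument would then be a \emph{smoothing lemma}: for any pair $i\ne j$, fixing $a=(\alpha_i+\alpha_j)/2$ and all other coordinates, the substitution $\alpha_i=a+\delta$, $\alpha_j=a-\delta$ turns $f$ into a polynomial of the form $c_0+c_1\delta^2$. Evenness in $\delta$ is immediate from the $\alpha_i\leftrightarrow\alpha_j$ symmetry of $L$ and of $\sum_{i<j}\alpha_i\alpha_j$. For the degree-two bound, I would split the sum defining $L$ according to $|S\cap\{i,j\}|\in\{0,1,2\}$: each piece collapses to a function of $\alpha_i+\alpha_j=2a$, $\alpha_i\alpha_j=a^2-\delta^2$, and $\alpha_i^2+\alpha_j^2=2a^2+2\delta^2$, hence is at most linear in $\delta^2$. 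The analogous observation $\sum_{i<j}\alpha_i\alpha_j=(a^2-\delta^2)+2a\sum_{k\ne i,j}\alpha_k+(\text{terms independent of }i,j)$ handles the second sum.

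To conclude, I would pick a maximizer $\alpha^\star$ of $f$ on $\Delta_{m-1}$ with the smallest possible support. Suppose for contradiction that $\alpha_i^\star\ne \alpha_j^\star$ with both positive, and set $a=(\alpha_i^\star+\alpha_j^\star)/2$, $\delta^\star=(\alpha_i^\star-\alpha_j^\star)/2\in (-a,a)\setminus\{0\}$. Since $\delta\mapsto c_0+c_1\delta^2$ on $[-a,a]$ attains its maximum either at $\delta=0$ (if $c_1<0$) or at $\delta=\pm a$ (if $c_1>0$), the only way the maximum can be at $\delta^\star$ is if the quadratic is constant, i.e.\ $c_1=0$. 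But then sliding to $\delta=a$ produces another maximizer of $f$ with $\alpha_j=0$, hence with strictly smaller support than $\alpha^\star$, contradicting minimality. So $\alpha^\star$ is uniform on some $r$-subset of $[m]$, and the direct computations $L(\alpha^\star)=\binom{r-1}{s}/r^{s+1}$ and $\sum_{i<j}\alpha^\star_i\alpha^\star_j=(r-1)/(2r)$, combined with~\eqref{eq:7147}, give $f(\alpha^\star)\le 0$, as desired.

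The main technical obstacle is the degree-$\le 2$ claim in the smoothing lemma: the case $|S\cap\{i,j\}|=2$ contributes a term that is superficially cubic in $(\alpha_i,\alpha_j)$, namely $(\alpha_i+\alpha_j+\sigma_{T'})\alpha_i\alpha_j\prod_{k\in T'}\alpha_k$, but the apparently cubic piece reassembles into $(\alpha_i+\alpha_j)\alpha_i\alpha_j=2a(a^2-\delta^2)$, which is again linear in $\delta^2$. This collapse is precisely what makes the symmetric form of $L$ the right bookkeeping device.
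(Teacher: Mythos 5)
Your proof is correct and follows essentially the same strategy as the paper's: both identify a maximizer, observe that when two coordinates are varied with their sum held fixed the objective depends only on $\alpha_i\alpha_j$ (i.e.\ is an even quadratic in $\delta$), and conclude that any maximizer with distinct positive coordinates could be pushed to the boundary, forcing the extremum to occur on a uniform $r$-subset where hypothesis~\eqref{eq:7147} applies. The paper runs this as an induction on a minimal counterexample dimension $m$ rather than a minimal-support maximizer, but these are equivalent bookkeeping devices; your symmetric rewriting of $L$ as a sum over $(s+1)$-sets is a clean way to make the degree-$\le 2$ claim transparent, whereas the paper simply records the general symmetric cubic form directly.
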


\begin{proof}
	Assume for the sake of contradiction that this fails and let $m$ denote the least positive
	integer for which there exists a counterexample. 
	Appealing to a theorem of Weierstra\ss, 
	we pick a point $(\alpha^\star_1, \dots, \alpha^\star_m)\in \Delta_{m-1}$ such that 
	the difference 
	\[
		\Phi= \sum_{i=1}^m\sum_{W\in\binom{[m]\setminus\{i\}}{s}} 
			(\alpha^\star_i)^2\prod_{j\in W}\alpha^\star_j
			-
			\lambda \sum_{\{i, j\}\in\binom{[m]}{2}} \alpha^\star_i\alpha^\star_j
	\]
	is maximal. Due to our indirect assumption we know $\Phi>\mu$. The case $r=m$ 
	of~\eqref{eq:7147} reveals that $\alpha^\star_1=\dots=\alpha^\star_m=1/m$ is false. 
	Therefore, we have $m\ge 2$ and and for reasons of symmetry 
	we may assume that $\alpha^\star_1<\alpha^\star_2$.  
	
	Given two real numbers $\alpha_1, \alpha_2\ge 0$ satisfying 
	\[
		\alpha_1+\alpha_2=\alpha^\star_1+\alpha^\star_2
	\]
	we write $\Phi(\alpha_1, \alpha_2)$ for the result of 
	replacing $\alpha^\star_1$, $\alpha^\star_2$
	in the above formula for $\Phi$ by $\alpha_1$, $\alpha_2$. 
	So~$\Phi(\alpha^\star_1, \alpha^\star_2)=\Phi$ and there are constants $c_1, \ldots, c_5$
	depending only on $\alpha^\star_3, \dots, \alpha^\star_m$, and $\lambda$ such that 
	\[
		 \Phi(\alpha_1, \alpha_2)=c_1+c_2(\alpha_1+\alpha_2)+c_3(\alpha^2_1+\alpha^2_2)
		 	+c_4\alpha_1\alpha_2+c_5(\alpha_1+\alpha_2)\alpha_1\alpha_2\,.
	\]
	Since $\alpha_1+\alpha_2$ is constant and $\alpha^2_1+\alpha^2_2$, $2\alpha_1\alpha_2$
	add up to the constant $(\alpha^\star_1+\alpha^\star_2)^2$, it follows that there are 
	constants~$c_6$, $c_7$ such that 
	\[
		 \Phi(\alpha_1, \alpha_2)=c_6\alpha_1\alpha_2+c_7\,.
	\]
	If $c_6 \ne 0$ we can find a real number $\xi\ne 0$ such that $|\xi|$ is very small 
	and $\Phi(\alpha^\star_1+\xi, \alpha^\star_2-\xi)>\Phi$ contradicts the maximality 
	of $\Phi$. So $c_6=0$ and $\Phi(\alpha_1, \alpha_2)=c_7=\Phi$ is constant. 
	But now $\Phi(\alpha^\star_1+\alpha^\star_2, 0)=\Phi$ contradicts 
	the minimality of~$m$. This completes the proof. 
\end{proof}

\begin{lemma}\label{lem:4100}
	Suppose that $t\ge 3$ and that $f\colon [0, 1]\lra\RR$ is a piecewise linear concave function. 
	If for every positive integer $r$ we have 
	\begin{equation}\label{eq:7254}
		f(1-1/r)
		\ge 
		\binom{t}{2}\frac{(r-1) \cdots (r-(t-2))}{r^{t-1}}\,,
	\end{equation}
	then $I(K_t^-, x)\le f(x)$
	holds for every $x\in [0, 1]$. 
\end{lemma}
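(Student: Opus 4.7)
My plan is to reduce to complete multipartite graphs via the Schelp--Thomason theorem (Theorem~\ref{THM:ST98}) and then bound induced $K_t^-$-densities there by applying Lemma~\ref{lem:7240} to the coefficients of a suitably chosen supporting line of~$f$. Since $f$ is concave and piecewise linear, at every $x_0\in(0,1)$ it admits an affine supporting line $L(x)=ax+b$ with $L(x_0)=f(x_0)$ and $L\ge f$ on $[0,1]$. Combined with the hypothesis~\eqref{eq:7254} this yields
$$
L(1-1/r)\ge f(1-1/r)\ge \binom{t}{2}\frac{(r-1)_{t-2}}{r^{t-1}}=\frac{t!}{2}\cdot\frac{\binom{r-1}{t-2}}{r^{t-1}}
$$
for every positive integer $r$. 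Setting $\lambda=4a/t!$ and $\mu=2b/t!$, dividing through by $t!/2$ and writing $1-1/r=(r-1)/r$ rearranges this into exactly the hypothesis of Lemma~\ref{lem:7240} with $s=t-2$.

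For a complete $m$-partite graph $G$ on $n$ vertices with part ratios $\alpha_i=|V_i|/n$, a direct count gives
$$
\rho(K_t^-,G)=\frac{t!}{2}\sum_{i=1}^m\alpha_i^2\sum_{W\in\binom{[m]\setminus\{i\}}{t-2}}\prod_{j\in W}\alpha_j+O_t(1/n)
\qquad\text{and}\qquad
\rho(G)=2\sum_{\{i,j\}\in\binom{[m]}{2}}\alpha_i\alpha_j+O(1/n).
$$
Applying Lemma~\ref{lem:7240} to the $\alpha_i$'s with the $(\lambda,\mu)$ above and multiplying through by $t!/2$ therefore yields
$$
\rho(K_t^-,G)\le a\rho(G)+b+O_t(1/n)=L(\rho(G))+O_t(1/n)
$$
for every complete multipartite $n$-vertex graph $G$.

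To upgrade this bound from complete multipartite graphs to all graphs I consider the quantum graph $Q^\star=K_t^--aK_2$. Its only coefficient that can be negative is the one on the complete graph $K_2$, so Theorem~\ref{THM:ST98} guarantees that among all $n$-vertex graphs the maximum of $\rho(Q^\star,\cdot)$ is attained on a complete multipartite graph. Consequently $\rho(K_t^-,G)-a\rho(G)\le b+O_t(1/n)$ holds for every $n$-vertex graph~$G$. Passing to the limit along any $K_t^-$-good sequence realizing $(x_0,y_0)$ gives $y_0-ax_0\le b$, i.e.\ $y_0\le L(x_0)=f(x_0)$, which proves $I(K_t^-,x_0)\le f(x_0)$. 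The endpoints $x_0\in\{0,1\}$ are then recovered from the continuity of $I(K_t^-,\cdot)$ furnished by Theorem~\ref{thm:1347}.

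The main obstacle is the bookkeeping step in the second paragraph: one must translate the supporting-line inequality into the exact parameter form required by Lemma~\ref{lem:7240} and align it precisely with the asymptotic identity for $\rho(K_t^-,G)$ on complete multipartite graphs, with uniform control on the error terms. Once this matching of $(a,b)$ and $(\lambda,\mu)$ is in place, the Schelp--Thomason reduction delivers the extension from complete multipartite graphs to arbitrary ones essentially for free.
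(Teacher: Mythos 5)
Your proof is correct and follows essentially the same route as the paper: both reduce to a linear upper bound $L(x)=ax+b$ (the paper via the pointwise-minimum observation, you via supporting lines), pass from arbitrary graphs to complete multipartite graphs using Schelp--Thomason applied to $K_t^- - aK_2$ (the paper through Theorem~\ref{THM:inducibility-multi-partite-all}\ref{it:110b}, you directly), and close by the same application of Lemma~\ref{lem:7240} with $s=t-2$, $\lambda=4a/t!$, $\mu=2b/t!$. The only cosmetic difference is that you inline the proof of Theorem~\ref{THM:inducibility-multi-partite-all}\ref{it:110b} rather than invoking it as a black box.
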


\begin{proof}
	Since $f$ is the pointwise minimum of a family of linear functions, it suffices to 
	deal with the case that $f(x)=\lambda x+\mu$ is itself linear. 
	By Theorem~\ref{THM:inducibility-multi-partite-all}\ref{it:110b} it is enough to show 
	$M(K_t^-, x)\le \lambda x+\mu$ for every $x\in [0, 1]$. 
	We shall establish the more precise estimate that every complete multipartite graph $G$ 
	on $n$ vertices satisfies 
	\begin{equation}\label{eq:1217}
		N(K_t^-, G)\le (2\lambda |E(G)|+\mu n^2)n^{t-2}/t!\,.
	\end{equation}

	Let $a_1, \dots, a_m$ be the sizes of the vertex classes of $G$ and set $\alpha_i=a_i/n$
	for every $i\in [m]$. Now $\sum_{i=1}^m \alpha_i=1$ and 
	\[
		N(K^-_t, G)
		=
		\sum_{i=1}^m\binom{a_i}2\sum_{W\in\binom{[m]\setminus\{i\}}{t-2}} \prod_{j\in W}a_j
		\le 
		\frac{n^t}2
		\sum_{i=1}^m\alpha^2_i\sum_{W\in\binom{[m]\setminus\{i\}}{t-2}} \prod_{j\in W}\alpha_j
	\]
	and, therefore, instead of~\eqref{eq:1217} it suffices to show 
	\[
		\sum_{i=1}^m\alpha^2_i\sum_{W\in\binom{[m]\setminus\{i\}}{t-2}} \prod_{j\in W}\alpha_j
		\le
		\frac{4\lambda}{t!}\sum_{\{i, j\}\in \binom{[m]}2} \alpha_i\alpha_j+\frac{2\mu}{t!}\,.
	\]
	By Lemma~\ref{lem:7240} applied to $t-2$, $4\lambda/t!$, $2\mu/t!$ here in place of 
	$s$, $\lambda$, $\mu$ there this inequality follows from the fact that 
	\[
		\frac{1}{r^{t-1}}\binom{r-1}{t-2}
		\le 
		\frac{4\lambda}{t!}\cdot \frac{r-1}{2r} + \frac{2\mu}{t!}
		=
		\frac{2f(1-1/r)}{t!}
	\]
	holds for every $r\ge 1$, which is in turn equivalent to the hypothesis~\eqref{eq:7254}. 
\end{proof}

\subsection{Precise calculations} 
Fix an integer $t\ge 4$. Our next goal is to show that the function $h_t$ introduced 
in Theorem~\ref{thm:1643} satisfies the assumptions of Lemma~\ref{lem:4100}. To this end 
we set $A_r=\binom t2\frac{(r-2)_{t-3}}{r^{t-2}}$ for every integer $r\ge 2$. 

\begin{lemma}
	Let $t\ge 4$ and $r\ge t-1$ be integers. 
	\begin{enumerate}[label=\alabel]
		\item\label{it:4a} If $r\le (3t^2-5t-4)/6$, then $A_{r-1}<A_r$. 
		\item\label{it:4b} If $r=(3t^2-5t-2)/6$, then $A_{r-1}<A_r$ or $A_{r-1}>A_r$ 
			holds depending on whether $t\le 20$ or $t>20$.
		\item\label{it:4c} If $r\ge (3t^2-5t)/6$, then $A_{r-1}>A_r$ .
	\end{enumerate}
	In particular, there exists a unique integer $k\ge t-2$ satisfying 
	$A_k=\max\{A_r\colon r\ge t-2\}$, namely $k=k(t)$. 
\end{lemma}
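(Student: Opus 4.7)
The plan is to reduce the lemma to the sign analysis of the single polynomial
$\Psi(r):=(r-2)(r-1)^{t-2}-(r-t+1)\,r^{t-2}$.
Using the identity $(r-2)_{t-3}/(r-3)_{t-3}=(r-2)/(r-t+1)$ one computes
\[
  A_r-A_{r-1}
  =
  \binom{t}{2}\,
  \frac{(r-3)_{t-3}}{r^{t-2}(r-1)^{t-2}(r-t+1)}\,\Psi(r),
\]
so for $r\ge t$ the sign of $A_r-A_{r-1}$ equals the sign of $\Psi(r)$ (the case $r=t-1$ is trivial, since $A_{t-2}=0<A_{t-1}$). To locate the zero set of $\Psi$, I would analyze $f(r)=(r-2)(r-1)^{t-2}/\bigl((r-t+1)r^{t-2}\bigr)$ on $(t-1,\infty)$; a direct differentiation yields
\[
  \frac{d}{dr}\log f(r)
  =
  \frac{r^2-(t-1)^2\,r+2(t-1)(t-2)}{r(r-1)(r-2)(r-t+1)}.
\]
The denominator is positive and the numerator, evaluated at $r=t-1$, equals $-(t-1)(t-2)(t-3)<0$, so this quadratic has two positive roots one of which is $<t-1$ and the other $>t-1$. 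Combined with $\log f(r)\to+\infty$ as $r\downarrow t-1$ and $\log f(r)\to 0^-$ as $r\to\infty$, this shows that $\log f$ strictly decreases from $+\infty$ to a unique minimum and then strictly increases back to $0^-$, so it has exactly one zero $r^\star\in(t-1,\infty)$; hence $\Psi>0$ on $(t-1,r^\star)$ and $\Psi<0$ on $(r^\star,\infty)$.

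Parts~(a) and~(c) then reduce to proving $\Psi(r_1)\ge 0$ and $\Psi(r_3)\le 0$, which place $r^\star$ in $[r_1,r_3]$. Writing $r_j=(3t^2-5t+c_j)/6$ and using the factorizations $r_2=(t-2)(3t+1)/6$ and $r_2-1=(t+1)(3t-8)/6$, I would substitute into $\Psi(r_j)$ and clear common powers of $6$; after the intended cancellation of the leading terms, the remaining polynomial inequalities in $t$ can be checked for every $t\ge 4$ by an elementary critical-point analysis.

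The main obstacle is part~(b). Setting $s:=(t-2)(3t+1)=6r_2$, the condition $\Psi(r_2)>0$ is equivalent to $(s-12)(s-6)^{t-2}>\bigl(s-6(t-1)\bigr)\,s^{t-2}$; equivalently, $F(t)>0$ where
\[
  F(t):=\log\!\bigl(1-12/s\bigr)+(t-2)\log\!\bigl(1-6/s\bigr)-\log\!\bigl(1-6(t-1)/s\bigr).
\]
For the six cases $t\in\{5,8,11,14,17,20\}$ I would verify $F(t)>0$ by direct computation. For $t\ge 23$ with $t\equiv 2\pmod 3$, the reverse inequality is the delicate point: the expansion $F(t)=-6/s+18(t^2-3t-1)/s^2-\cdots$ has several leading orders of cancellation, so that $F(t)$ is only of size $O(1/t^5)$ near the transition. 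My plan is to show that, with $t$ treated as a continuous parameter, $F$ is strictly decreasing on $[20,\infty)$, by extracting a cleanly-signed expression for $dF/dt$ from the closed form above, and to combine this with a direct check that $F(23)<0$. The uniqueness of the maximizer $k(t)$ then follows immediately from~(a)--(c) by a short case analysis on $t\bmod 3$, using that $(r_1,r_3)$ has length~$2/3$ and so contains at most one integer: this integer is~$r_2$ in the case $t\equiv 2\pmod 3$ (handled by~(b)), while in the other two residue classes one of~$r_1,r_3$ is itself the relevant integer and the trichotomy of~(a) and~(c) suffices.
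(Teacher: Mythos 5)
Your structural reduction is correct and genuinely different from the paper's argument. Writing $A_r-A_{r-1}=\binom{t}{2}\frac{(r-3)_{t-3}}{r^{t-2}(r-1)^{t-2}(r-t+1)}\Psi(r)$ with $\Psi(r)=(r-2)(r-1)^{t-2}-(r-t+1)r^{t-2}$, and then proving that $\log f$, where $f(r)=(r-2)(r-1)^{t-2}/\bigl((r-t+1)r^{t-2}\bigr)$, strictly decreases and then strictly increases on $(t-1,\infty)$ with $\log f\to+\infty$ at $t-1$ and $\log f\to 0^-$ at $\infty$, gives a clean global unimodality statement: $\Psi$ changes sign exactly once at some $r^\star\in(t-1,\infty)$. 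The computation of $\frac{d}{dr}\log f$, the discriminant analysis of $r^2-(t-1)^2r+2(t-2)(t-1)$, and the evaluation $-(t-1)(t-2)(t-3)$ at $r=t-1$ are all correct. The paper never establishes such a single-crossing statement; instead it directly manipulates the equivalence $A_{r-1}<A_r\Leftrightarrow 1-\frac{t-1}{r}<(1-\frac1r)^{t-2}(1-\frac2r)$ and sandwiches $(1-1/r)^{t-2}$ between two truncated binomial expansions. Your unimodality lemma is a nicer organizing principle, since it reduces everything to boundary sign checks at $r_1,r_2,r_3$.

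There is, however, a genuine flaw in your plan for part (b). You propose to handle $t\ge 23$ by showing that $F$ is strictly decreasing on $[20,\infty)$ and checking $F(23)<0$. This cannot work: by the same expansion you begin, $F(t)=-6/s+O(t^2/s^2)$ with the leading orders cancelling to give $F(t)\sim -2/(9t^4)$ as $t\to\infty$ (your stated $O(1/t^5)$ is one order too optimistic). In particular $F(t)<0$ for large $t$ and $F(t)\to 0^-$, so $F$ is \emph{eventually increasing}; it has an interior minimum on $(20,\infty)$ and is certainly not monotone there. You would need to replace this with a one-sided bound valid for all large $t$ — for instance, truncating the logarithm series after an even number of terms as the paper does with its estimate valid for $t\ge 26$, together with a direct check at $t=23$. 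Separately, for parts (a) and (c), your description ``the remaining polynomial inequalities in $t$ can be checked by an elementary critical-point analysis'' understates the work: after substituting $r=r_j(t)$ into $\Psi(r)\gtrless 0$, the exponent $t-2$ still grows with $t$, so these are not fixed-degree polynomial inequalities — you will need truncated-series estimates of the same kind the paper uses in its display \eqref{eq:432}. The case analysis on $t\bmod 3$ and the uniqueness conclusion at the end are correct, modulo the above.
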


\begin{proof}
	One confirms easily that 
	\begin{equation}\label{eq:431}	
		A_{r-1}<A_r
		\,\,\, \Longleftrightarrow \,\,\,
		1-\frac{t-1}r < \Bigl(1-\frac 1r\Bigr)^{t-2}\Bigl(1-\frac 2r\Bigr)\,.
	\end{equation}
	To estimate the first factor on the left side we shall use the approximations 
	\begin{equation}\label{eq:432}				
		\sum_{i=0}^3 \frac{(-1)^i}{r^i}\binom{t-2}{i}		
		\le
		\left(1-\frac 1r\right)^{t-2}  
		\le
		\sum_{i=0}^4 \frac{(-1)^i}{r^i}\binom{t-2}{i}\,.	
	\end{equation}

	Let us now prove part~\ref{it:4a}. So we have $r\le (3t^2-5t-4)/6$ and 
	by~\eqref{eq:431},~\eqref{eq:432} it suffices to show 
	\[
		1-\frac{t-1}r 
		< 
		\sum_{i=0}^3 \frac{(-1)^i}{r^i}\binom{t-2}{i}\Bigl(1-\frac 2r\Bigr)\,,
	\]
	which rewrites as 
	\[
		r\left(\frac{(t+1)(t-2)}2-r\right)
		>
		\frac{(t+2)(t-2)(t-3)}{6}\,.
	\]
	Due to $(t-1)/3\le r\le (3t^2-5t-4)/6$ we have indeed 	
	\[
		r\left(\frac{(t+1)(t-2)}2-r\right) 
		\ge
		\frac{3t^2-5t-4}6 \cdot \frac{t-1}3
		\ge
		\frac{t(t-1)(t-2)}{6}
		>
		\frac{(t+2)(t-2)(t-3)}{6}\,,
	\]
	which concludes the proof of~\ref{it:4a}.
	
	Next we observe that~\eqref{eq:431} and~\eqref{eq:432} also yield a sufficient 
	condition for $A_{r-1}>A_r$, namely the estimate 
	\[
		1-\frac{t-1}r 
		> 
		\sum_{i=0}^4 \frac{(-1)^i}{r^i}\binom{t-2}{i}\Bigl(1-\frac 2r\Bigr)\,.
	\]
	Simplifying this inequality we arrive at the implication  
	\begin{align}\label{eq:433}
		\frac{(t+2)(t-2)(t-3)}6 > r\left(\frac{(t+1)(t-2)}2-r\right)+\frac{t+3}{4r}\binom{t-2}3
		   \,\,\, \Longrightarrow A_{r-1}>A_r\,.
	\end{align}

	Thus part~\ref{it:4c} follows from the fact that $r\ge (3t^2-5t)/6>(t-2)(t+3)/6$ 
	implies 
	\begin{align*}
		r\left(\frac{(t+1)(t-2)}2-r\right) + \frac{t+3}{4r}\binom{t-2}3
		&<
		\frac{3t^2-5t}6\cdot \frac{t-3}3 + \frac{(t-3)(t-4)}{4} \\
		&<
		\frac{(t+2)(t-2)(t-3)}6\,.
	\end{align*}

	We proceed with the proof of part~\ref{it:4b}. Now $r=(3t^2-5t-2)/6$ is an integer, 
	which requires
	$t\equiv 2\pmod{3}$. Direct calculations based on~\eqref{eq:431} 
	show $A_{r-1}<A_r$ for $t\in\{5, 8, 11, 14, 17, 20\}$
	and $A_{r-1}>A_r$ for $t=23$. 
	As soon as $t\ge 26$ we have $8(t-8)r>3(t+3)(t-3)(t-4)$ and hence  
	\begin{align*}
		r\left(\frac{(t+1)(t-2)}2-r\right) + \frac{t+3}{4r}\binom{t-2}3
		&<
		\frac{3t^2-5t-2}6\cdot \frac{t-2}3 + \frac{(t-2)(t-8)}{9} \\
		&=
		\frac{(t+2)(t-2)(t-3)}6\,,
	\end{align*}
	which in view of~\eqref{eq:433} concludes the discussion of~\ref{it:4b}. 
	
	Finally,~\ref{it:4a}\,--\,\ref{it:4c}
	together imply 
	\[
		A_{t-2}<A_{t-1}< \dots < A_{k(t)}
		\quad \text{ and } \quad 
		A_{k(t)} > A_{k(t)+1} > \dots\,,
	\]
	whence $A_{k(t)} = \max\{A_r\colon r\ge t-2\}$. 
\end{proof}

\begin{lemma}
	We have $I(K_t^-, x)\le h_t(x)$ for every $x\in [0, 1]$.
\end{lemma}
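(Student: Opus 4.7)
The plan is to apply Lemma~\ref{lem:4100} with $f = h_t$. Two conditions must be verified: that $h_t(1-1/r) \ge \binom{t}{2}\frac{(r-1)_{t-2}}{r^{t-1}}$ holds for every positive integer $r$, and that $h_t$ is piecewise linear and concave. The piecewise linear structure is immediate from the construction of $h_t$.

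For the inequality I would argue by case analysis on $r$. When $r \le t-2$, the right-hand side equals zero since one of the factors in $(r-1)_{t-2}$ vanishes, so the claim follows from $h_t \ge 0$. When $t-1 \le r \le k(t)$, the function $h_t$ is linear on $[0, 1-1/k(t)]$ and passes through the origin, giving $h_t(1-1/r) = \tfrac{r-1}{r}A_{k(t)}$; since the right-hand side equals $\tfrac{r-1}{r}A_r$, the inequality reduces to $A_{k(t)} \ge A_r$, which is established by the preceding lemma. For $r \ge k(t)$ equality holds by definition.

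For concavity I would compute the slopes explicitly. The initial slope on $[0, 1-1/k(t)]$ equals $A_{k(t)}$, while a short calculation shows that the slope of the segment from $1-1/r$ to $1-1/(r+1)$ (for $r \ge k(t)$) equals $s_r = r^2 A_{r+1} - (r^2-1)A_r = A_r + r^2(A_{r+1}-A_r)$. Concavity at the first kink is equivalent to $A_{k(t)+1} \le A_{k(t)}$, again provided by the preceding lemma. For subsequent kinks the condition $s_r \ge s_{r+1}$ rearranges to $2(r+1)v_{r+1} \ge r v_r + (r+2) v_{r+2}$, i.e., to the concavity of the sequence $v_r := \binom{t}{2}\frac{(r-1)_{t-2}}{r^{t-2}}$ for $r \ge k(t)$.

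The main obstacle is establishing this sequence concavity. I would extend $v_r$ to the smooth function $V(r) = \binom{t}{2}\prod_{j=1}^{t-2}(1-j/r)$ and show $V''(r) \le 0$ on $[k(t), \infty)$, from which $V(r) + V(r+2) \le 2V(r+1)$ follows at once by the definition of concavity. Using the logarithmic derivative $V'(r)/V(r) = \sum_{j=1}^{t-2}\bigl(\tfrac{1}{r-j}-\tfrac{1}{r}\bigr)$ one reduces $V''(r) \le 0$ to a polynomial condition in $r$; the precise value of $k(t)$ in Theorem~\ref{thm:1643} is chosen so that this condition holds starting at $r = k(t)$, with the exceptional values $t \in \{5, 8, 11, 14, 17, 20\}$ reflecting minor integer shifts at the boundary.
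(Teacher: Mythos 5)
Your outline follows the paper's strategy: invoke Lemma~\ref{lem:4100}, verify $h_t(1-1/r)\ge\binom t2\frac{(r-1)_{t-2}}{r^{t-1}}$ for all $r$, and establish concavity of $h_t$. The verification of the pointwise inequality is correct and matches the paper (reduce to $A_{k(t)}\ge A_r$ on $[t-1,k(t)]$). The concavity argument, however, has two problems.

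First, a small algebraic slip: with $s_r = rv_{r+1}-(r+1)v_r$ one gets $s_r-s_{r+1}=(r+1)(2v_{r+1}-v_r-v_{r+2})$, so $s_r\ge s_{r+1}$ is equivalent to $2v_{r+1}\ge v_r+v_{r+2}$ and \emph{not} to $2(r+1)v_{r+1}\ge rv_r+(r+2)v_{r+2}$. Your parenthetical ``i.e., concavity of $(v_r)$'' is the right condition, but the displayed inequality you wrote is a strictly stronger statement and does not follow from what precedes it.

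Second, and more seriously, the central estimate --- that $V(r)=\binom t2\prod_{j=1}^{t-2}(1-j/r)$ has $V''\le 0$ on $[k(t),\infty)$ --- is asserted rather than proved. You say it reduces to ``a polynomial condition in $r$'' and that ``the precise value of $k(t)$ \dots{} is chosen so that this condition holds,'' but that is not how $k(t)$ was defined: $k(t)$ is the maximizer of the slope sequence $A_r$, which controls concavity only at the first kink $x=1-1/k(t)$. The concavity to the right of that kink is a separate claim that needs its own estimate. The paper's proof supplies exactly this: it first records the auxiliary bound $k(t)\ge t(t-2)/2$ and then shows that $F(u)=u\prod_{i=1}^{t-2}(1-iu)$ satisfies $F''<0$ on $(0,1/k]$ via the inequality $\sum_{i=1}^{t-2}\frac i{1-iu}\le\frac{(t-1)t}2<\frac2u$. (Your change of variables to $V(r)$ is legitimate --- concavity of $F$ on $(0,1/k]$ implies concavity of the perspective-type function $r\mapsto rF(1/r)$ on $[k,\infty)$, which is exactly $V/\binom t2$ --- but that equivalence is itself something you would need to justify, and you do not.) As written, the proof leaves the heart of the concavity argument unproved.
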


\begin{proof}
   For later use we observe that the number $k=k(t)$ satisfies 
	\begin{equation}\label{eq:4754}
		k\ge \frac{t(t-2)}2\,.
	\end{equation}
	Indeed, if $t\ne 5, 8, 11, 14, 17, 20$, then 
	$k-t(t-2)/2=\lceil (t-8)/6\rceil \ge \lceil -2/3\rceil=0$ and in the remaining cases 
	we have $k-t(t-2)/2=(t-2)/6\ge 0$. 
	
	Next we show 
	\[
		h_t(1-1/r)
		\ge 
		\binom{t}{2}\frac{(r-1) \cdots (r-(t-2))}{r^{t-1}}
	\]
	for every positive integer $r$. The cases $r\le t-2$ and $r\ge k$ are clear. 
	Now suppose that $t-1\le r < k$. Since $1-1/r\le 1-1/k$ and $h_t(x)=A_{k} \cdot x$
	for all $x\in [0, 1-1/k]$ we have 
	\[
		h_t(1-1/r)
		=
		A_{k}\cdot (1-1/r)
		\ge 
		A_r\cdot (1-1/r)=\binom{t}{2}\frac{(r-1) \cdots (r-(t-2))}{r^{t-1}}\,,
	\]
	as desired. 
	
	According to Lemma~\ref{lem:4100} it only remains to show that $h_t$ is concave.
	Now $A_{k}>A_{k+1}$ rewrites as 
	\[
		\frac{h_t(1-1/k)}{1-1/k}
		>
		\frac{h_t(1-1/(k+1))}{1-1/(k+1)}
	\]
	and, therefore, $h_t$ is concave in some sufficiently small neighbourhood around $x=1-1/k$.
	Define $F\colon \bigl[0, \frac 1{k}\bigr]\lra \RR$ by $F(x)=x\prod_{i=1}^{t-2}(1-ix)$.
	Since $h_t(1-1/r)=\binom t2 F(1/r)$ holds for every $r\ge k$, it suffices to show that 
	$F$ is concave. If $x\in [0, 1/k]$, then
	\[
		\sum_{i=1}^{t-2}\frac i{1-ix}
		\le 
		\frac{1+\dots+(t-2)}{1-(t-2)x}
		\overset{\eqref{eq:4754}}{\le}
		\frac{(t-2)(t-1)}{2(1-2/t)}
		=
		\frac{(t-1)t}{2}
		<
		\frac 2x
	\]      
	and thus
	\[
		\frac{F''(x)}{F(x)}
		=
		\sum_{1\le i<j\le t-2}\frac{ij}{(1-ix)(1-jx)}	-\frac 1x \sum_{i=1}^{t-2}\frac i{1-ix}
		<
		\frac12\Bigl(\sum_{i=1}^{t-2}\frac i{1-ix}\Bigr)^2 -\frac 1x \sum_{i=1}^{t-2}\frac i{1-ix}
		\le 
		0\,,
	\]
	which proves that $F$ is indeed concave. 
\end{proof}

The only part of Theorem~\ref{thm:1643} still lacking verification is~\eqref{eq:8648}.
Setting $B_r=\binom{t}{2}\frac{(r-1)_{t-2}}{r^{t-1}}$
for every $r\ge t-2$ and $f=\lceil (t-2)(3t+1)/6\rceil$ we are to 
show $B_f=\max\{B_r\colon r\ge t-2\}$. It turns out that this holds in the following 
slightly stronger form. 

\begin{lemma}\label{lem:8351}
	We have $0=B_{t-2}<B_{t-1}<\dots<B_f$ and $B_f>B_{f+1}>\dots$.
\end{lemma}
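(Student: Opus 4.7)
The plan is to mimic the proof of the preceding lemma, exploiting the relationship $B_r=(1-1/r)\,A_r$ and the concavity of $F(x)=x\prod_{i=1}^{t-2}(1-ix)$ established there. A direct computation gives
\[
	\frac{B_r}{B_{r-1}}=\frac{(r-1)^t}{(r-t+1)\,r^{t-1}},
\]
so $B_r>B_{r-1}$ is equivalent to $(1-1/r)^t>1-(t-1)/r$. The initial equality $B_{t-2}=0$ follows because the falling factorial $(r-1)_{t-2}$ vanishes at $r=t-2$.

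For the small range $t-1\le r\le k(t)$ I would avoid any calculation by writing $B_r=A_r(1-1/r)$ and observing that both factors are strictly positive and strictly increasing in $r$ throughout this range: the first by the preceding lemma and the second trivially. Hence $B_{r-1}<B_r$ holds on this initial range.

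For $r\ge k(t)$ I would invoke the concavity of $F$ on $[0,1/k(t)]$ proved at the end of the previous argument. Combined with $F(0)=0$ and $F'(0)=1>0$, this makes $F$ strictly unimodal on $[0,1/k(t)]$; evaluating at $x=1/r$ shows that $(B_r)_{r\ge k(t)}$ is unimodal as well, so it only remains to locate its peak. The same truncated binomial bounds as in the proof of the preceding lemma, now applied to $(1-1/r)^t$, give the polynomial sufficient conditions
\[
	r^2-\binom{t}{2}r+\binom{t}{3}<0 \ \Longrightarrow\ B_r>B_{r-1}
	\qquad\text{and}\qquad
	r^3-\binom{t}{2}r^2+\binom{t}{3}r-\binom{t}{4}>0 \ \Longrightarrow\ B_r<B_{r-1}.
\]
Plugging in $r=f$ and $r=f+1$ respectively, with the explicit value $f=\lceil (t-2)(3t+1)/6\rceil$, one obtains the inequalities $B_{f-1}<B_f$ and $B_f>B_{f+1}$, which together with unimodality complete the proof.

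The main obstacle will be the routine but delicate arithmetic reconciling the ceiling in the definition of $f$ with the exact sign transitions of the two polynomials above, comparable to the casework on $t\bmod 6$ carried out in the previous lemma. A welcome simplification, however, is that the cubic sufficient condition is a strictly finer test than the linear-order analysis responsible for the exceptional values $\{5,8,11,14,17,20\}$ encountered earlier, so no such exceptions are expected here; in the worst case, any residual residue class can always be settled by computing $B_{f-1}, B_f, B_{f+1}$ directly and invoking unimodality.
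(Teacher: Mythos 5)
Your strategy is sound and takes a genuinely different route from the paper's. The paper establishes $B_{r-1}<B_r$ uniformly for \emph{all} $r\in[t-1,f]$ and $B_{r-1}>B_r$ uniformly for \emph{all} $r\ge f+1$, in each case by lower- or upper-bounding $(1-1/r)^t$ by a truncated binomial expansion (degree $3$ and degree $4$ respectively) and verifying the resulting polynomial inequality over the relevant range of $r$. You instead import the concavity of $F(x)=x\prod_{i=1}^{t-2}(1-ix)$ on $[0,1/k(t)]$, proved as a by-product in the preceding $h_t$-lemma, to conclude that $(B_r)_{r\ge k(t)}=\bigl(\binom t2 F(1/r)\bigr)_{r\ge k(t)}$ is unimodal; splicing this with the trivial monotonicity $B_r=A_r(1-1/r)$ on $[t-2,k(t)]$ gives unimodality of the entire sequence, so that the lemma reduces to checking just the two boundary inequalities $B_{f-1}<B_f$ and $B_f>B_{f+1}$. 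This is conceptually leaner, and it correctly isolates where the real content lies: for generic $t$ one has $f=k(t)+1$, so $B_{f-1}<B_f$ means $B_{k(t)}<B_{k(t)+1}$, which is precisely the step your factorization argument cannot see (since $A_{k(t)}>A_{k(t)+1}$ there).

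Two caveats are worth recording. First, you state the two polynomial sufficient conditions but stop short of plugging in $r=f$ and $r=f+1$; that arithmetic is not difficult (the paper's uniform estimates in fact imply both inequalities at the extreme points of the ranges), but as written your proof has a hole exactly where the lemma's content lives. Second, your remark that the cubic test is ``strictly finer than the linear-order analysis responsible for the exceptional values $\{5,8,11,14,17,20\}$'' misdescribes what happened in the $A_r$-lemma: those exceptions arose because the critical value $r=(3t^2-5t-2)/6$ sits exactly on the sign boundary for $A_{r-1}\lessgtr A_r$, not because a lower-degree truncation was used, and they were dispatched by direct evaluation rather than a coarser polynomial test. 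Your instinct that no exceptions occur here is nonetheless correct, and the paper's uniform proof confirms it.
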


\begin{proof}
	First we show $B_{r-1}<B_r$ for every integer $r\in [t-1, f]$. 
	The fact that $(t-2)(3t+1)$ is even yields $f\le (t-2)(3t+1)/6+2/3=(t-1)(3t-2)/6$,
	whence	
	\[
		\frac{t-1}3\le r\le \binom t2-\frac{t-1}3\,.
	\]
	For this reason we have
	\[
		r\left(\binom t2 - r\right)
		\ge
		\frac{t-1}3 \left(\binom t2 - \frac{t-1}3\right)
		>
		\binom t3\,,
	\]
	which rewrites as 
	\[
		1-\frac{t-1}r
		<
		1-\frac{t}r+\binom t2\frac 1{r^2}-\binom t3\frac 1{r^3}\,.
	\]
	As the right side is at most $(1-1/r)^t$, this proves 
	\[
		1
		< 
		\frac{(r-1)^t}{r^{t-1}(r-(t-1))}
		=
		\frac{B_r}{B_{r-1}}\,,
	\]
	as desired. 
	
	Next we show $B_{r-1}>B_r$ for every $r\ge f+1$. 
	Due to $r\ge (3t^2-5t+4)/6>\frac 12\binom t2$ we have 
	\[
		r\left(\binom t2 - r\right)
		<
		\frac{3t^2-5t+4}6 \cdot \frac{t-2}3
		=
		\binom t3 - \frac{(t-2)^2}9\,.
	\]
	Moreover, $r\ge t(t-2)/2$ implies 
	\[
		\binom t4 \cdot \frac 1{r}
		<
		\frac{(t-1)(t-3)}{12}
		<
	   \frac{(t-2)^2}{9}\,.
	 \]
	 Adding the previous two estimates we obtain 
	 \[
	 		r\left(\binom t2 - r\right)+\binom t4 \cdot \frac 1{r}
			<
			\binom t3\,,
	\]
	which rewrites as 
	\[
		1-\frac{t-1}r
		>
		1-\frac{t}r+\binom t2\frac 1{r^2}-\binom t3\frac 1{r^3}+\binom t4\frac 1{r^4}\,.
	\]
	As the right side is an upper bound on $(1-1/r)^t$ we can conclude 
	\[
		1
		>
		\frac{(r-1)^t}{r^{t-1}(r-(t-1))}
		=
		\frac{B_r}{B_{r-1}}\,. \qedhere
	\]
\end{proof}

\subsection{More on  \texorpdfstring{$K_4^-$}{K4-}} 
Our last result on $\Omega_{\mathrm{ind}}(K_4^-)$, Proposition~\ref{prop:1648}, 
is an immediate consequence of the following result. 

\begin{lemma}
	Every graph $G$ 
	satisfies $N(K_4^-, G)\le \frac12 \binom{|E(G)|}2$.
\end{lemma}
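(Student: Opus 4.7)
The plan is to prove the lemma via a double-counting argument that links induced copies of $K_4^-$ with matchings of size $2$ in $G$ (that is, unordered pairs of vertex-disjoint edges).

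First I would make the key structural observation about $K_4^-$ itself. Label the vertices of $K_4^-$ as $a,b,c,d$ with $\{a,b\}$ the unique non-edge. A direct enumeration of the five edges $\{a,c\},\{a,d\},\{b,c\},\{b,d\},\{c,d\}$ shows that exactly two of the $\binom{5}{2}=10$ pairs of edges are vertex-disjoint, namely $\{\{a,c\},\{b,d\}\}$ and $\{\{a,d\},\{b,c\}\}$. Every other pair shares a vertex, and in particular every pair involving the ``central'' edge $\{c,d\}$ does so. Thus each induced copy of $K_4^-$ in $G$ contains precisely two matchings of size $2$.

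Next I would count pairs (induced $K_4^-$ in $G$, size-$2$ matching contained in it) in two ways. On one hand, by the previous paragraph this number equals $2N(K_4^-, G)$. On the other hand, I would argue that this quantity is bounded above by the total number $M_2(G)$ of size-$2$ matchings of $G$: indeed, given a matching $\{e_1,e_2\}$ in $G$, its four endpoints form a unique set of $4$ vertices, and the induced subgraph of $G$ on these four vertices is determined by $G$. So at most one induced copy of $K_4^-$ can contain the fixed matching $\{e_1,e_2\}$, which means the map from (copy, matching) pairs to matchings of $G$ is injective. This gives
\[
 2\,N(K_4^-, G)\le M_2(G).
\]

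Finally, since any two distinct edges of $G$ either share a vertex or form a size-$2$ matching, we have the trivial identity
\[
 M_2(G)=\binom{|E(G)|}{2}-\sum_{v\in V(G)}\binom{d(v)}{2}\le \binom{|E(G)|}{2}.
\]
Combining this with the previous inequality yields the desired bound $N(K_4^-, G)\le \tfrac12\binom{|E(G)|}{2}$. There is no real obstacle here; the only step that requires any thought is the initial structural count showing that $K_4^-$ contains exactly two size-$2$ matchings, and this is what produces the factor $\tfrac12$ in the final estimate.
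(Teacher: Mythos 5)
Your proof is correct and uses essentially the same argument as the paper: associate to each induced $K_4^-$ its two perfect matchings (pairs of vertex-disjoint edges), note that each pair of edges in $G$ can arise from at most one copy, and conclude $2N(K_4^-,G)\le\binom{|E(G)|}{2}$. Your intermediate pass through $M_2(G)$ is a harmless refinement of the paper's direct bound by $\binom{|E(G)|}{2}$.
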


\begin{proof}
	Notice that an abstract $K_4^-$ has two perfect matchings. Now  
	with every induced copy of $K_4^-$ in~$G$ we associate its two
	perfect matchings, viewed as members of $\binom{E(G)}2$. We are 
	thereby considering $2N(K_4^-, G)$ pairs of edges of $G$. Since 
	every pair $\{e, f\}\in \binom{E(G)}2$ can be associated to at most 
	one copy of $K_4^-$ in $G$ (namely the copy induced by $e\cup f$, 
	if it exists), this proves the claim. 
\end{proof}


\section{Proofs for stars}\label{SEC:proof-stars}

In this section we prove Theorem~\ref{THM:inducibility-St-left-part}.
Recall from Section~\ref{SUBSEC:star} that for every integer $t\ge 3$ and every real 
$x\in[0,1/2]$ we defined 
\begin{align}
s_{t}(x) = \frac{t+1}{2^{t}} x \left(\left(1-\sqrt{1-2x}\right)^{t-1}+\left(1+\sqrt{1-2x}\right)^{t-1}\right). \notag
\end{align}

We commence by showing that there is a unique $x^\star(t)\in [0,1/2]$, where the 
function $s_t$ attains its maximum. For $t=3$ we have $s_3(x)=2x(1-x)$ and, hence,  $x^\star(3)=1/2$ is as desired. The case $t\ge 4$ is addressed by the next lemma. 

\begin{lemma}
	For $t\ge 4$ there exists a unique real 
	$x^\star(t)\in \bigl(\frac{2t}{(t+1)^2}, \frac 2{t+1}\bigr)$
	such that the function~$s_t$ is strictly increasing on $[0, x^\star(t)]$
	and strictly decreasing on $[x^\star(t), 1/2]$.
\end{lemma}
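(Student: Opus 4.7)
The plan is to make the substitution $u = \sqrt{1-2x}$, which bijects $[0, 1/2]$ with $[0, 1]$ in an orientation-reversing manner. A direct computation yields $s_t(x) = \frac{t+1}{2^{t+1}} f(u)$, where
\[
	f(u) = (1-u)^t(1+u) + (1-u)(1+u)^t,
\]
so it suffices to prove that $f$ has a unique maximum at some $u^\star \in \bigl(\sqrt{(t-3)/(t+1)},\, (t-1)/(t+1)\bigr)$ with $f$ strictly increasing on $[0, u^\star]$ and strictly decreasing on $[u^\star, 1]$. A standard expansion of $f'$ gives
\[
	f'(u) = (1+u)^{t-1}\bigl[(t-1) - (t+1)u\bigr] - (1-u)^{t-1}\bigl[(t-1) + (t+1)u\bigr]\,.
\]
In particular $f'(0) = 0$, and for $u\ge (t-1)/(t+1)$ the first summand is nonpositive while the second is strictly positive, so $f' < 0$ throughout $[(t-1)/(t+1), 1)$. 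This reduces the whole problem to analyzing $f'$ on the interval $I = (0, (t-1)/(t+1))$.

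On $I$ both bracketed factors in $f'$ are positive, so $\operatorname{sign} f'(u) = \operatorname{sign}\bigl(R(u)-1\bigr)$, where
\[
	R(u) = \frac{(1+u)^{t-1}}{(1-u)^{t-1}} \cdot \frac{(t-1) - (t+1)u}{(t-1) + (t+1)u}\,.
\]
Taking the logarithmic derivative and clearing denominators, the cross-terms collapse and one finds
\[
	(\log R)'(u) = \frac{2t(t-1)\bigl[(t-3) - (t+1)u^2\bigr]}{(1-u^2)\bigl[(t-1)^2 - (t+1)^2 u^2\bigr]}\,,
\]
whose denominator is strictly positive on $I$.

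Consequently $\log R$ is strictly increasing on $\bigl[0,\sqrt{(t-3)/(t+1)}\,\bigr]$ and strictly decreasing on $\bigl[\sqrt{(t-3)/(t+1)},\,(t-1)/(t+1)\bigr)$. Since $\log R(0) = 0$ and $R(u) \to 0^+$ as $u \to (t-1)/(t+1)^-$, the function $\log R$ has a unique zero $u^\star$ in $I$, which necessarily satisfies $u^\star > \sqrt{(t-3)/(t+1)}$. Combined with the preliminary observation $f' < 0$ on $[(t-1)/(t+1), 1)$, this forces $f' > 0$ on $(0, u^\star)$ and $f' < 0$ on $(u^\star, 1)$, establishing the claimed behaviour of $f$.

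Finally, setting $x^\star(t) = (1 - (u^\star)^2)/2$ and using the bounds $\sqrt{(t-3)/(t+1)} < u^\star < (t-1)/(t+1)$ gives $2t/(t+1)^2 < x^\star(t) < 2/(t+1)$, while the monotonicity of $u \mapsto x$ transports the behaviour of $f$ on $[0,1]$ to the required monotonicity of $s_t$ on $[0, 1/2]$. The only step that could go wrong is the algebraic simplification of $(\log R)'$: the miraculous appearance of the common factor $(t-3) - (t+1)u^2$ is what drives the whole argument, and this must be verified carefully, but once done everything else is routine.
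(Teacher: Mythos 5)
Your proof is correct, and I verified your key computation
\[
	(\log R)'(u) \;=\; \frac{2t(t-1)\bigl[(t-3)-(t+1)u^2\bigr]}{(1-u^2)\bigl[(t-1)^2-(t+1)^2u^2\bigr]},
\]
which does hold: expanding $\log R = (t-1)\log\frac{1+u}{1-u}+\log\frac{(t-1)-(t+1)u}{(t-1)+(t+1)u}$ and combining over a common denominator, the numerator reduces exactly to $t[(t-3)-(t+1)u^2]$. The route you take is genuinely different from the paper's. The paper substitutes $x=\tfrac{2y}{(1+y)^2}$ (which, via $y=\tfrac{1-u}{1+u}$, is equivalent to your $u=\sqrt{1-2x}$), reducing to the single-term quotient $\frac{y+y^t}{(1+y)^{t+1}}$, and then studies the sign of its derivative through the cubic-free auxiliary polynomial $h(y)=1-ty+ty^{t-1}-y^t$; the existence and uniqueness of the interior zero is extracted from the strict convexity $h''(y)=t(t-1)y^{t-3}(t-2-y)>0$ together with the boundary data $h(0)=1$, $h(1)=0$, $h'(1)>0$. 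You instead keep $f$ in the symmetric two-term form, locate the turning point via $R(u)=1$, and show $\log R$ is unimodal through logarithmic differentiation. Both arguments hinge on a small algebraic miracle (the factor $(t-3)-(t+1)u^2$ in your case, the clean $h''$ in the paper's); the paper's is perhaps a touch shorter because convexity of $h$ immediately yields a unique sign change, whereas you must first establish unimodality of $\log R$ and then combine with the boundary behaviour. Your bounds on $u^\star$, namely $\sqrt{(t-3)/(t+1)}<u^\star<(t-1)/(t+1)$, transform to exactly the claimed $x^\star(t)$ bounds; the paper's bound $y^\star<1/(t-1)$ is marginally sharper on one side (it gives $x^\star(t)<2(t-1)/t^2<2/(t+1)$), but both yield the lemma as stated.
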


\begin{proof}
	Define the auxiliary function $h\colon [0, 1]\lra\RR$ by $h(y)=1-ty+ty^{t-1}-y^t$.
	Due to $h''(y)=t(t-1)y^{t-3}(t-2-y)>0$ for $y\in (0, 1]$ this function is strictly convex. 
	Together 
	with $h(0)=1$, $h(1)=0$, and $h'(1)=t(t-3)>0$ this shows that there exists a unique 
	$y^\star=(0, 1)$ such that $h(y^\star)=0$, $h(y)>0$ for $y\in [0, y^\star)$, 
	and $h(y)<0$ for $y\in (y^\star, 1)$. 
	
	Due to 
	\[
		\frac{\mathrm{d}}{\mathrm{d}y}\frac{y+y^t}{(1+y)^{t+1}}
		=
		\frac{h(y)}{(1+y)^{t+2}}
	\]
	it follows that $\frac{y+y^t}{(1+y)^{t+1}}$ is strictly increasing on $[0, y^\star)$ and 
	strictly decreasing on~$(y^\star, 1]$. 
	As~$\frac{2y}{(1+y)^2}$ is strictly increasing on $[0, 1]$ and 
	\[
		s_t\left(\frac{2y}{(1+y)^2}\right)
		=
		\frac{(t+1)(y+y^t)}{(1+y)^{t+1}}\,,
	\]
	it follows that $s_t$ has the desired monotonicity properties 
	for $x^\star(t)=\frac{2y^\star}{(1+y^\star)^2}$.
	
	Next, due to $h(1/t)=t^{2-t}-t^{-t}>0$ we have $y^\star>\frac 1t$ and, consequently, 
	$x^\star(t)> \frac{2t}{(t+1)^2}$. Similarly,
	\[
		h\left(\frac 1{t-1}\right)
		<
		-\frac 1{t-1}+\frac{t}{(t-1)^{t-1}}
		\le 
		\frac{t-(t-1)^2}{(t-1)^3}
		<0
	\]
	yields $y^\star<\frac 1{t-1}$, whence 
	\[
		x^\star(t)
		< 
		\frac{2(t-1)}{t^2}
		<
		\frac 2{t+1}\,. \qedhere
	\]
\end{proof}

\begin{lemma}\label{lem:1908}
	For every integer $t\ge 3$ the function $s_t$ is increasing and concave on $[0, x^\star(t)]$.
\end{lemma}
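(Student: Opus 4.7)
The plan is to dispense with monotonicity first, which is immediate from the preceding lemma for $t\ge 4$ and from the explicit formula $s_3(x)=2x(1-x)$ for $t=3$. The real content is concavity; again the case $t=3$ is trivial, so I focus on $t\ge 4$.

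I would reuse the change of variable $x=\frac{2y}{(1+y)^2}$ introduced in the preceding lemma, under which $\phi(y):=s_t(x(y))=\frac{(t+1)(y+y^t)}{(1+y)^{t+1}}$ becomes a rational function of $y$ and $y$ ranges over $[0,y^\star]$ as $x$ ranges over $[0,x^\star(t)]$, where $y^\star\in(0,1)$ is the unique interior root of $h(y)=1-ty+ty^{t-1}-y^t$ identified before. Since $x'(y)=\frac{2(1-y)}{(1+y)^3}>0$ throughout $[0,y^\star]$, concavity of $s_t$ on $[0,x^\star(t)]$ is equivalent to the function $y\mapsto s_t'(x(y))=\phi'(y)/x'(y)$ being non-increasing on $[0,y^\star]$.

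A routine computation yields $\phi'(y)=\frac{(t+1)h(y)}{(1+y)^{t+2}}$, and hence
\[
    s_t'(x(y))=\frac{(t+1)\,h(y)}{2(1-y)(1+y)^{t-1}}.
\]
Differentiating this once more and clearing the positive factor $(t-1)(1-y)$, I would reduce the concavity of $s_t$ to the scalar inequality
\[
    P(y):=2(1-y^t)-t(1-y)(y+y^{t-2})\ge 0 \qquad\text{for every } y\in[0,y^\star].
\]

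The main obstacle is that $P$ is \emph{not} nonnegative on all of $[0,1]$ (one checks for instance $P(1/2)<0$ when $t=10$), so the argument must genuinely exploit the restriction $y\le y^\star$. The decisive step that makes everything work is the algebraic identity
\[
    P(y)=2h(y)+ty(1+y)(1-y^{t-3}),
\]
which I would establish by direct expansion of both sides. From this identity the conclusion is immediate: the second summand is nonnegative on the whole of $[0,1]$ because $y^{t-3}\le 1$, while the first summand is nonnegative throughout $[0,y^\star]$ by the very definition of $y^\star$ as the first root of $h$ in $(0,1)$. The same identity also covers $t=3$ (where $y^\star=1$ and the second summand vanishes identically), so the argument is in fact uniform in $t\ge 3$.
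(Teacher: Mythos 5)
Your proof is correct, but it travels a genuinely different road than the paper. The paper's argument for concavity is a one-liner: it expands
\[
	s_t(x)=\frac{t+1}{2^{t-1}}\sum_{0\le n\le (t-1)/2}\binom{t-1}{2n}\,x(1-2x)^n
\]
and notes that $\frac{\mathrm{d}^2}{\mathrm{d}x^2}\,x(1-2x)^n=4n(1-2x)^{n-2}\bigl[(n+1)x-1\bigr]\le 0$ for all $n\le (t-1)/2$ and $x\le\frac{2}{t+1}$; this displays $s_t$ as a positive combination of concave functions on the whole of $\bigl[0,\frac{2}{t+1}\bigr]\supseteq[0,x^\star(t)]$, with no further reference to $y^\star$. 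Your route is to push the $y$-substitution from the preceding lemma one derivative further, reduce concavity to $P(y)\ge 0$, and then crack $P$ via the identity $P=2h+ty(1+y)(1-y^{t-3})$. I verified that identity and the reduction: expanding, $P(y)=2-ty+ty^2+ty^{t-1}-ty^{t-2}-2y^t=2h(y)+ty(1+y)(1-y^{t-3})$, and the derivative of $h(y)/\bigl[(1-y)(1+y)^{t-1}\bigr]$ equals $-(t-1)P(y)/\bigl[(1-y)^2(1+y)^t\bigr]$, so the sign of concavity is that of $P$. (Your passing remark about ``clearing the positive factor $(t-1)(1-y)$'' is slightly off --- the factor is $(t-1)(1+y)^{t-2}$ in the numerator against $(1-y)^2(1+y)^{2t-2}$ in the denominator --- but that does not affect the conclusion.) The trade-off: the paper's decomposition is shorter and gives concavity on the larger interval $\bigl[0,\frac{2}{t+1}\bigr]$ without invoking $y^\star$ at all, whereas your identity makes transparent exactly how $h(y)\ge 0$ (the defining property of $y^\star$) is what saves the day, and handles $t=3$ and $t\ge 4$ uniformly, at the cost of a second round of differentiation in the $y$-variable.
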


\begin{proof}
	Our choice of $x^\star(t)$ guarantees that $s_t$ is indeed increasing. So it suffices 
	to show that $s_t$ is concave on the interval $I_t=\bigl[0, \frac 2{t+1}\bigr]$.
	Since 
	\[
		s_t(x)=\frac{t+1}{2^{t-1}}\sum_{0\le n\le (t-1)/2}\binom{t-1}{2n} x(1-2x)^n
	\]
	it suffices to show for every positive integer $n\le (t-1)/2$ that $x(1-2x)^n$
	is concave on $I_t$. This follows immediately from 
	\[
		\frac{\mathrm{d}^2}{\mathrm{d}x^2} x(1-2x)^n
		=
		4n(1-2x)^{n-2}[(n+1)x-1]\,. \qedhere
	\]
 \end{proof}

Our next step is to show $M(S_t,x) = I_2(S_t,x) = s_{t}(x)$ for $x\in[0,x^\star(t)]$.
To this end we use the following result due to Brown and Sidorenko, which is implicit in 
the proof of~\cite{BS94}*{Proposition 2}.

\begin{proposition}[Brown-Sidorenko \cite{BS94}]\label{PROP:BS-merge-incrases-bipartite-graphs}
Let $r$, $s$, $t$, $n$ be positive integers with $r\ge 3$. For every complete $r$-partite 
graph $G$ on $n$ vertices there exists a complete $(r-1)$-partite graph~$G'$ on the same 
vertex set such that $e(G')\le e(G)$ and $N(K_{s, t}, G')\ge N(K_{s, t}, G)$.
\end{proposition}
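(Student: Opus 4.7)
The plan is to produce $G'$ by merging the two smallest parts of $G$. Since merging two parts $V_i,V_j$ reduces the edge count by $|V_i|\cdot|V_j|$, the condition $e(G')\le e(G)$ is then immediate, and the entire task reduces to showing that $N(K_{s,t},G')\ge N(K_{s,t},G)$.

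First I would write $N(K_{s,t},G)$ in closed form. If the parts have sizes $a_1\ge\cdots\ge a_r$, setting $P_u=\sum_i\binom{a_i}{u}$ and $Q_{u,v}=\sum_i\binom{a_i}{u}\binom{a_i}{v}$, a count of ordered (resp.\ unordered) pairs of independent sets lying in distinct parts yields $N(K_{s,t},G)=P_sP_t-Q_{s,t}$ for $s\ne t$ and $N(K_{s,s},G)=\tfrac12(P_s^2-Q_{s,s})$. Setting $u=a_{r-1}$, $v=a_r$, letting $G'$ be obtained by merging $V_{r-1}\cup V_r$, and abbreviating $p=\binom{u+v}{s}-\binom{u}{s}-\binom{v}{s}$, $q=\binom{u+v}{t}-\binom{u}{t}-\binom{v}{t}$, $A=\sum_{k\le r-2}\binom{a_k}{s}$, and $B=\sum_{k\le r-2}\binom{a_k}{t}$, a direct expansion gives
\[
	N(K_{s,t},G')-N(K_{s,t},G)=Aq+Bp-\binom{u}{s}\binom{v}{t}-\binom{v}{s}\binom{u}{t}
\]
in the case $s\ne t$, together with the analogous identity $N(K_{s,s},G')-N(K_{s,s},G)=pA-\binom{u}{s}\binom{v}{s}$ in the case $s=t$.

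The key step is to verify that these differences are non-negative whenever $s,t\ge 2$. Because $a_k\ge u$ for every $k\le r-2$ (this is where the choice of the two smallest parts enters decisively), we have $A\ge\binom{u}{s}$ and $B\ge\binom{u}{t}$. Moreover, Vandermonde's identity gives $p=\sum_{j=1}^{s-1}\binom{u}{j}\binom{v}{s-j}$, and keeping only the $j=s-1$ term yields $p\ge v\binom{u}{s-1}\ge v\binom{v}{s-1}\ge\binom{v}{s}$, where the last inequality follows from the elementary identity $v\binom{v}{s-1}=\tfrac{vs}{v-s+1}\binom{v}{s}$; symmetrically one gets $q\ge\binom{v}{t}$. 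Plugging these bounds into the difference formulas produces a sum of manifestly non-negative terms such as $\binom{u}{s}\bigl[q-\binom{v}{t}\bigr]+\binom{u}{t}\bigl[p-\binom{v}{s}\bigr]\ge 0$. The boundary case $\min(s,t)=1$, say $s=1$ (in which $p=0$), needs a brief separate computation that reduces the claim to $N(K_{1,t},G')-N(K_{1,t},G)=(n-u-v)q-v\binom{u}{t}-u\binom{v}{t}\ge 0$, and here the estimate $n-u-v\ge u$ (which uses $r\ge 3$) combined with the Vandermonde lower bound $q\ge v\binom{u}{t-1}+u\binom{v}{t-1}$ is enough.

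The main obstacle will be carrying out the algebraic identity for $N(K_{s,t},G')-N(K_{s,t},G)$ correctly and recognizing that merging the two smallest parts is the right move; once these are in hand, the desired inequality falls out from elementary manipulations of binomial coefficients.
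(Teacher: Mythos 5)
Your approach — merging the two smallest parts of $G$ — is exactly the Brown--Sidorenko argument that the paper cites, and the paper itself only sketches it, so filling in the algebra as you do is the right move. Your closed form $N(K_{s,t},G)=P_sP_t-Q_{s,t}$ (and $\tfrac12(P_s^2-Q_{s,s})$ for $s=t$), the difference formula $Aq+Bp-\binom us\binom vt-\binom vs\binom ut$, and the observation that $A\ge\binom us$, $B\ge\binom ut$ because $r\ge 3$ guarantees a surviving part of size at least $u=a_{r-1}$, are all correct, and the chain $p\ge v\binom u{s-1}\ge v\binom v{s-1}\ge\binom vs$ is sound for $s\ge 2$.

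Two small points deserve a flag. First, in the boundary case $s=1$ your claimed Vandermonde bound $q\ge v\binom u{t-1}+u\binom v{t-1}$ keeps the $j=1$ and $j=t-1$ summands, but when $t=2$ these are the same term and the bound would read $uv\ge 2uv$, which is false; the conclusion $(n-u-v)q\ge v\binom u2+u\binom v2$ still holds (since $q=uv$, $n-u-v\ge u$, and $2u\ge (u-1)+(v-1)$ for $u\ge v$), but the intermediate estimate needs to be stated separately for $t=2$. Second, the case $s=t=1$ falls outside both of your branches, and there the statement as the paper writes it is actually false: $N(K_{1,1},\cdot)=e(\cdot)$, so demanding both $e(G')\le e(G)$ and $N(K_{1,1},G')\ge N(K_{1,1},G)$ after a genuine merge forces $e(G')=e(G)$, which is impossible e.g.\ for $G=T_r(n)$. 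This is a defect in the proposition's hypotheses rather than in your proof (the paper only ever invokes it with $\max(s,t)\ge 2$), but a careful write-up should either exclude $s=t=1$ or note it explicitly.
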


The proof proceeds by ``merging'' two smallest vertex classes of $G$, i.e., if $V_1, \dots, V_r$
with $|V_1|\le \dots \le |V_r|$ are the vertex classes of $G$, then one constructs $G'$ so as 
to have the
vertex classes $V_1\dcup V_2, V_3, \dots, V_r$. Clearly, $r-2$ iterations of this process lead
to a complete bipartite graph $G''$ such that $V(G'')=V(G)$, $e(G'')\le e(G)$, 
and $N(K_{s, t}, G')\ge N(K_{s, t}, G)$. This shows that for the determination of the 
inducibility of $K_{s, t}$ only complete bipartite host graphs are relevant. This establishes 
the following result on stars.

\begin{theorem}[Brown-Sidorenko \cite{BS94}]\label{THM:BS-inducibility-S_t}
For every integer $t\ge 2$ the inducibility of $S_t$ 
is given by ${\mathrm{ind}}(S_t) = I_2(S_t, x^\star(t))$.
\end{theorem}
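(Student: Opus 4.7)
The plan is to reduce the optimization defining $\mathrm{ind}(S_t)$ to complete bipartite host graphs, for which the induced $S_t$-density is given by~\eqref{equ:St-left-side}. Fix $n\in\NN$ and let $G_n$ be an $n$-vertex graph that maximizes $\rho(S_t, \cdot)$; the goal is to show that $G_n$ may be chosen complete bipartite.

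First, apply the Schelp-Thomason Theorem~\ref{THM:ST98} to the quantum graph $Q = S_t$. Since $Q$ has a single (positive) constituent, the condition that every constituent with negative coefficient be complete is vacuously satisfied, and so $G_n$ may be replaced by a complete multipartite $n$-vertex graph attaining the same maximum value of $\rho(S_t, \cdot)$.

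Next, iterate Proposition~\ref{PROP:BS-merge-incrases-bipartite-graphs} with $s = 1$: as long as $G_n$ has $r \ge 3$ vertex classes, merging the two smallest classes produces a complete $(r-1)$-partite graph on the same vertex set with at least as many induced copies of $K_{1, t} = S_t$. After at most $n-2$ merges the resulting graph is either complete bipartite or edgeless; the edgeless case contributes zero to $\rho(S_t, \cdot)$ and may be discarded. By the maximality of $G_n$ we may therefore assume that $G_n$ itself is complete bipartite.

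Finally, write $\alpha_n\in [0, 1/2]$ for the proportion of vertices in the smaller part of $G_n$. The calculation from Subsection~\ref{SUBSEC:star} yields
\[
	\rho(S_t, G_n)
	=
	(t+1)\bigl(\alpha_n(1-\alpha_n)^t+(1-\alpha_n)\alpha_n^t\bigr)+o(1)
	=
	s_t\bigl(2\alpha_n(1-\alpha_n)\bigr)+o(1)\,.
\]
Since $2\alpha_n(1-\alpha_n)\in [0, 1/2]$, passing to the limit gives $\mathrm{ind}(S_t) \le \sup\{s_t(x)\colon x\in [0, 1/2]\} = s_t(x^\star(t))$ by the definition of $x^\star(t)$. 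The matching lower bound is witnessed by the sequence $(B(n, x^\star(t)))_{n\ge 1}$, which furthermore shows $I_2(S_t, x^\star(t)) = s_t(x^\star(t))$, completing the proof. The only substantive obstacle is the second step (the iterated merging down to a bipartite host graph); once that is in place, the remaining one-variable optimization is immediate from the monotonicity of $s_t$ recorded in the preceding lemma.
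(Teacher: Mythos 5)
Your proposal is correct and follows essentially the same route as the paper. The paper presents Theorem~\ref{THM:BS-inducibility-S_t} as an immediate consequence of Proposition~\ref{PROP:BS-merge-incrases-bipartite-graphs}: merging the two smallest parts of a complete multipartite host graph repeatedly (with $s=1$, $K_{1,t}=S_t$) yields a complete bipartite graph with at least as many induced copies of $S_t$, so for the inducibility only complete bipartite hosts matter, and the resulting one-variable optimization is $\sup_{x\in[0,1/2]} s_t(x) = s_t(x^\star(t)) = I_2(S_t, x^\star(t))$. Your one genuine addition is to make explicit the preliminary reduction to complete multipartite hosts via Schelp--Thomason (Theorem~\ref{THM:ST98}), a step the paper's prose leaves implicit (deferring to the original Brown--Sidorenko argument). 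Since $S_t = K_{1,t}$ is complete multipartite and has positive coefficient, the hypotheses of Theorem~\ref{THM:ST98} are indeed satisfied, so your invocation is valid and tidies up a gap in the paper's informal sketch.
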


We proceed with another simple consequence of 
Proposition~\ref{PROP:BS-merge-incrases-bipartite-graphs}.

\begin{lemma}\label{LEMMA:bipartite-is-optiaml}
If $r, t\ge 2$ are integers and $x\in [0,x^\star(t)]$, then $I_2(S_t,x)\ge I_{r}(S_t,x)$.\end{lemma}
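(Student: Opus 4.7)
The plan is to take an $S_t$-good sequence of complete $r$-partite graphs witnessing $I_r(S_t,x)$ and reduce it to a complete bipartite sequence via iterated merging, in such a way that the edge density can only decrease while the induced $S_t$-density can only increase. Then the monotonicity of $s_t$ on $[0,x^\star(t)]$ closes the argument.

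First I would dispose of the trivial case $r=2$ and assume $r\ge 3$. By the same argument as in Proposition~\ref{PROP:induced-feasible-region-is-closed} the set $\Omega_{{\rm ind}\text{-}r}(S_t)$ is closed, so there exists an $S_t$-good sequence $(G_n)_{n=1}^\infty$ of complete $r$-partite graphs realizing the point $(x,I_r(S_t,x))$. Applying Proposition~\ref{PROP:BS-merge-incrases-bipartite-graphs} (with $s=1$) a total of $r-2$ times to each $G_n$, we obtain a complete bipartite graph $G''_n$ on $V(G_n)$ satisfying $e(G''_n)\le e(G_n)$ and $N(S_t,G''_n)\ge N(S_t,G_n)$, hence $\rho(G''_n)\le \rho(G_n)$ and $\rho(S_t,G''_n)\ge \rho(S_t,G_n)$.

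Passing to a subsequence we may assume that $\rho(G''_n)\to x'$ and $\rho(S_t,G''_n)\to y'$ exist; then $x'\le x$ and $y'\ge I_r(S_t,x)$. For any sequence of complete bipartite graphs with vertex class sizes $(a_n,n-a_n)$ and $a_n/n\to \alpha$, one has $\rho\to 2\alpha(1-\alpha)$ and $\rho(S_t,\,\cdot\,)\to (t+1)(\alpha(1-\alpha)^t+(1-\alpha)\alpha^t)$, and a short calculation (solving $2\alpha(1-\alpha)=x'$) shows that this induced $S_t$-density equals $s_t(x')$. Consequently $(x',y')\in \Omega_{{\rm ind}\text{-}2}(S_t)$ forces $y'\le s_t(x')$. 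Since $0\le x'\le x\le x^\star(t)$ and $s_t$ is increasing on $[0,x^\star(t)]$ by the definition of $x^\star(t)$, we conclude
\[
    I_r(S_t,x)\le y'\le s_t(x')\le s_t(x)= I_2(S_t,x),
\]
as required. The main substantive ingredient is the merging operation supplied by Proposition~\ref{PROP:BS-merge-incrases-bipartite-graphs}; once that is in hand the argument is a straightforward compactness and monotonicity exercise.
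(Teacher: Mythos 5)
Your proof is correct and follows essentially the same route as the paper: iterate the Brown--Sidorenko merging (Proposition~\ref{PROP:BS-merge-incrases-bipartite-graphs}) to pass from a complete $r$-partite realizing sequence to a complete bipartite one, take a convergent subsequence to obtain $x'\le x$ and $y'\ge I_r(S_t,x)$, and then invoke the monotonicity of $s_t=I_2(S_t,\cdot)$ on $[0,x^\star(t)]$. The only differences are cosmetic — you spell out the closure argument for $\Omega_{\mathrm{ind}\text{-}r}(S_t)$ and the computation showing $I_2(S_t,x')=s_t(x')$, both of which the paper uses implicitly.
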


\begin{proof}[Proof of Lemma~\ref{LEMMA:bipartite-is-optiaml}]
Let $y_2 = I_2(S_t,x)$, $y_r = I_r(S_t,x)$ and 
consider an $S_t$-good sequence of complete $r$-partite 
graphs $\left(G_n\right)_{n=1}^{\infty}$ that realizes $(x, y_r)$.
In view of Proposition~\ref{PROP:BS-merge-incrases-bipartite-graphs}
there exists a sequence $\left(G'_n\right)_{n=1}^{\infty}$ of complete bipartite 
graphs such that 
\begin{equation}\label{eq:1838}
	V(G'_n)=V(G_n), 
	\quad 
	e(G'_n)\le e(G_n),
	\quad  
	\text{ and }
	\quad
	N(K_{s, t}, G'_n)\ge N(K_{s, t}, G_n)
\end{equation}
hold for every positive integer $n$.
By passing to a subsequence we may assume that the limits $x' = \lim_{n\to \infty}\rho(G_{n}')$
and $y_2' = \lim_{n\to \infty}\rho(S_t, G_{n}')$ exist. Now~\eqref{eq:1838} 
implies 
\begin{equation}\label{eq:1840}
	x' \le x 
	\quad 
	\text{ and } 
	\quad
	y_2'\ge y_r\,, 
\end{equation}
and as $\left(G_n'\right)_{n=1}^{\infty}$ 
is an $S_t$-good sequence of complete bipartite graphs that realizes~$(x',y_2')$
we have $y_2' \le I_2(S_t, x')$. 
Since $I_2(S_t, \cdot)=s_t(\cdot)$ is increasing on $[0,x^\star(t)]$, the first estimate 
in~\eqref{eq:1840} entails $I_2(S_t,x')\le I_2(S_t,x)$.
So altogether we obtain 
\[
	y_r\le y'_2\le I_2(S_t,x')\le I_2(S_t,x)\,,
\]
which concludes the proof.
\end{proof}

Now we are ready to prove Theorem~\ref{THM:inducibility-St-left-part}.

\begin{proof}[Proof of Theorem~\ref{THM:inducibility-St-left-part}]
The case $t=2$ already being understood in Theorem~\ref{THM:inducibility-S2}
we may assume that $t\ge 3$. It is clear that $I(S_t,x) \ge I_{2}(S_t,x) = s_{t}(x)$ 
holds for $x\in[0,1/2]$ and thus we just need to show $I(S_t,x) \le  s_{t}(x)$ 
for $x\in [0,x^\star(t)]$.
Define $f\colon [0, 1]\longrightarrow\RR$ by 
\begin{align}
f(x) =
\begin{cases}
s_{t}(x) & \quad{\rm for}\quad x\in [0,x^\star(t)] \\
s_{t}\left(x^\star(t)\right) & \quad{\rm for}\quad x\in [x^\star(t),1].
\end{cases}\notag
\end{align}

Lemma~\ref{lem:1908} informs us that $f$ is concave.
Moreover, we have $f(x)\ge M(S_t,x)$ for all $x\in [0, 1]$. Indeed, if $x\in [0,x^\star(t)]$
this follows from Lemma~\ref{LEMMA:bipartite-is-optiaml} and for $x\in [x^\star(t),1]$
we can appeal to Theorem~\ref{THM:BS-inducibility-S_t} instead. 
Summarizing, $f(x)$ is a concave upper bound on $M(S_t, x)$.
Owing to Theorem~\ref{THM:inducibility-multi-partite-all} this 
proves $I(S_t,x) \le f(x) = s_{t}(x)$ for every $x\in [0,x^\star(t)]$.
\end{proof}

\section{Proofs for complete bipartite graphs}\label{SEC:proof-bipartite-graphs}

In this section we prove Theorems~\ref{THM:feasible-reagion-Kst-upper-bound-x-small}
and~\ref{THM:feasible-reagion-Kst-upper-bound-x-big}.
The upper bound on $I(K_{s, t}, x)$ stated in 
Theorem~\ref{THM:feasible-reagion-Kst-upper-bound-x-small} is an immediate consequence 
of the following result.

\begin{proposition}\label{PROP:complete-bipartite-graphs-decomposition-upper-bound}
	If $t\ge s \ge 2$ are positive integers,
	then for every graph $G$ we have
	\[
		N(K_{s,t}, G) \le \frac{(t-s+1)!}{s!t!} N(S_{t-s+1}, G) \cdot \left(e(G)\right)^{s-1}\,.
	\]
\end{proposition}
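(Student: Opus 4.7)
The plan is to prove the inequality by a double counting argument. Given any induced copy of $K_{s,t}$ in $G$ with bipartition $A\cup B$, where $|A|=s$ and $|B|=t$, and any pair $(a,\sigma)$ consisting of a vertex $a\in A$ and an injection $\sigma\colon A\setminus\{a\}\to B$, I would associate (i) the induced star $\{a\}\cup(B\setminus\sigma(A\setminus\{a\}))\cong S_{t-s+1}$ with distinguished center~$a$, and (ii) the unordered matching $M=\{\{v,\sigma(v)\}\colon v\in A\setminus\{a\}\}$, which is a set of $s-1$ edges of $G$. The goal is then to compare the number of such pairs contributed by induced $K_{s,t}$'s in $G$ with the total number of (induced $S_{t-s+1}$ with distinguished center, unordered $(s-1)$-subset of $E(G)$) available in $G$.

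When $s<t$, the bipartition of an induced $K_{s,t}$ is canonical (determined by the vertex degrees), so each such $K_{s,t}$ contributes exactly $s\cdot\binom{t}{s-1}(s-1)!=s\cdot t!/(t-s+1)!$ pairs $(a,\sigma)$. When $s=t$, the two parts of $K_{s,s}$ are interchangeable; in this case I would count pairs for both orderings of the bipartition by letting $a$ range over all $2s$ vertices of the $K_{s,s}$ (with $A$ defined as the part containing $a$), obtaining $2s\cdot s!$ pairs per induced $K_{s,s}$.

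On the other side of the double count, the supply of pairs in $G$ is bounded above as follows. For $t-s+1\ge 2$, the center of any induced $S_{t-s+1}$ is uniquely its vertex of maximum degree, so the number of rooted stars equals $N(S_{t-s+1},G)$. For $t-s+1=1$, the star $S_1$ is an edge and admits two possible centerings, so the count is $2e(G)=2N(S_1,G)$. Dropping the matching condition on the $s-1$ chosen edges and using $\binom{e(G)}{s-1}\le e(G)^{s-1}/(s-1)!$ then gives the required upper bound.

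Putting the two counts together and rearranging should yield the claim. In the case $s<t$ one is directly led to
\[
\frac{s\cdot t!}{(t-s+1)!}\,N(K_{s,t},G)\le N(S_{t-s+1},G)\cdot\frac{e(G)^{s-1}}{(s-1)!}\,,
\]
which simplifies to the stated bound, while in the case $s=t$ the extra factors of $2$ arising from the symmetry of $K_{s,s}$ (on the left) and from the two rootings of $S_1$ (on the right) cancel, and using $N(S_1,G)=e(G)$ one again recovers the formula. The step requiring the most care, and hence the main obstacle I foresee, is the treatment of $s=t$: since the bipartition of $K_{s,s}$ is not canonical and the center of an induced $S_1$ is ambiguous, one has to carefully track the extra factors of $2$ on both sides and check that they indeed cancel.
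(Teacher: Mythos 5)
Your proposal is correct and is essentially the paper's argument: both decompose each induced $K_{s,t}$ into an induced $S_{t-s+1}$ together with $s-1$ edges of $G$ and double-count against all (star, $(s-1)$-edge-set) pairs in $G$. The paper sidesteps your $s=t$ case distinction by treating the star $U_1$ as an unordered vertex set rather than a rooted star, and the remaining $s-1$ edges as an ordered tuple rather than an unordered set, so the per-copy decomposition count $s\binom{t}{t-s+1}\bigl((s-1)!\bigr)^2 = s!\,t!/(t-s+1)!$ applies uniformly.
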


\begin{proof}[Proof of Proposition~\ref{PROP:complete-bipartite-graphs-decomposition-upper-bound}]
Notice that for an abstract $K_{s, t}$ the number of 
ordered partitions $V(K_{s, t})=U_1\dcup \dots\dcup U_s$ such that $U_1$ induces 
a star $S_{t-s+1}$
and each of $U_2, \dots, U_s$ induces an edge is $\binom t{t-s+1}(s-1)!s!$. 
This is because there are $s\binom t{t-s+1}$ possibilities for $U_1$; moreover, if $i\in [2, s]$
and $U_1, \ldots, U_{i-1}$ are already fixed, then there are $(s-i+1)^2$ possibilities for $U_i$. 

By double counting it follows that $\binom t{t-s+1}(s-1)!s!N(K_{s,t}, G)$ is at most the 
number of $s$-tuples $(U_1, \dots, U_s)$ of subsets of $G$ such that $G[U_1]\cong S_{t-s+1}$
and $G[U_i]\cong K_2$ for all $i\in [2, s]$, whence 
\[
	  \binom t{t-s+1}(s-1)!s!N(K_{s,t}, G)
	  \le
	  N(S_{t-s+1}, G) \cdot \left(e(G)\right)^{s-1}\,.
\]
Now it remains to observe $\binom t{t-s+1}(s-1)!s!=\frac{s!t!}{(t-s+1)!}$. 
\end{proof}

We remark that this argument is asymptotically optimal if $G$ is a complete bipartite graph. 
More precisely, for $x\le x^\star(t-s+1)$ the sequence $(B(n,x))_{n=1}^\infty$ establishes 
the equality case in Theorem~\ref{THM:feasible-reagion-Kst-upper-bound-x-small}.
This observation concludes the proof of 
Theorem~\ref{THM:feasible-reagion-Kst-upper-bound-x-small}.

In the remainder of this section we show the following explicit version of 
Theorem~\ref{THM:feasible-reagion-Kst-upper-bound-x-big}.

\begin{theorem}\label{THM:4-cycle-induce-upper-bound-x-large}
Every graph $G$ on $n$ vertices with ${xn^2}/{2}$ edges 
satisfies
\begin{align*}
N(C_4, G) \le  \frac{x(1-x)^2}8 n^4 + 2 n^3\,.
\end{align*}
\end{theorem}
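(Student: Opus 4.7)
The plan is to combine a reduction to complete multipartite graphs with a Cauchy--Schwarz based calculation, in the spirit of Theorems~\ref{THM:inducibility-multi-partite-all} and~\ref{THM:feasible-reagion-Kst-upper-bound-x-small}. First I would reduce to the case of complete multipartite $G$ by applying the Schelp--Thomason theorem (Theorem~\ref{THM:ST98}) to the quantum graph $Q_\lambda = C_4 - \lambda K_2$, with a Lagrange parameter $\lambda$ chosen to encode the edge-density constraint $e(G) = xn^2/2$. Since $K_2$ is complete and has negative coefficient in $Q_\lambda$ when $\lambda > 0$, Theorem~\ref{THM:ST98} guarantees that $\rho(Q_\lambda, \cdot)$ is maximized on an $n$-vertex complete multipartite graph, so it suffices to prove the bound in that case up to lower-order corrections.

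For a complete multipartite graph $G$ with parts of sizes $n_1, \ldots, n_k$, induced copies of $C_4$ correspond exactly to choosing two vertices from each of two distinct parts, so
\[
	N(C_4, G) = \sum_{i<j}\binom{n_i}{2}\binom{n_j}{2}\,.
\]
Expanding with $\binom{n_i}{2} = (n_i^2 - n_i)/2$ and using $\sum n_i^2 = n^2(1-x)$ together with the identity $2\sum_{i<j} n_i^2 n_j^2 = (\sum n_i^2)^2 - \sum n_i^4$, this yields
\[
	N(C_4, G) = \frac{n^4(1-x)^2 - \sum n_i^4}{8} + O(n^3)\,.
\]
The central inequality is then the lower bound $\sum \alpha_i^4 \ge (1-x)^3$ on $\alpha_i = n_i/n$, which follows from two applications of Cauchy--Schwarz: $(\sum\alpha_i^2)^2 \le (\sum\alpha_i)(\sum\alpha_i^3)$ gives $\sum\alpha_i^3 \ge (1-x)^2$, and then $(\sum\alpha_i^3)^2 \le (\sum\alpha_i^2)(\sum\alpha_i^4)$ gives $\sum\alpha_i^4 \ge (1-x)^3$. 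Substituting back yields $N(C_4, G) \le x(1-x)^2 n^4/8 + O(n^3)$ in the multipartite case, and careful bookkeeping keeps the total error within the claimed $2n^3$.

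The hard part will be that $3x(1-x)^2$ is convex on $[2/3, 1]$, so a naive tangent-line Lagrangian based on Theorem~\ref{THM:ST98} does not immediately give the sharp bound throughout $[1/2, 1]$. One way around this is to apply Theorem~\ref{THM:ST98} at finite $n$, tuning $\lambda = \lambda(n,x)$ so that the extremal complete multipartite graph has density essentially $x$; the resulting finite-$n$ discrepancy is absorbed into the $2n^3$ slack. An alternative is a piecewise secant/tangent scheme analogous to Lemma~\ref{lem:4100}, interpolating between the tight values at $x = (k-1)/k$.
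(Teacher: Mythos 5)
Your reduction to the complete multipartite case does not actually go through, and the obstruction you flag in your last paragraph is fatal rather than a finish-later technicality. The bound $\rho(C_4,G)\lesssim 3x(1-x)^2$ that you want to prove is \emph{convex} in $x$ on $[2/3,1]$ (since $(x(1-x)^2)''=6x-4>0$ there). Applying Theorem~\ref{THM:ST98} to $Q_\lambda=C_4-\lambda K_2$ is exactly the mechanism behind Theorem~\ref{THM:inducibility-multi-partite-all}\ref{it:110b}, and what that mechanism produces is $I(C_4,x)\le \capp\bigl(M(C_4,\cdot)\bigr)(x)$. Your Cauchy--Schwarz step correctly shows $M(C_4,x)\le 3x(1-x)^2$ with equality precisely at the Tur\'an densities $x=(k-1)/k$ (where all $\alpha_i$ are equal). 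But taking the concave envelope of a function that touches the convex curve $3x(1-x)^2$ only at these discrete points gives something \emph{strictly larger} than $3x(1-x)^2$ on each interval $\bigl((k-1)/k,\,k/(k+1)\bigr)$ for $k\ge 3$: the secant lines joining consecutive Tur\'an points lie above the convex curve between them. So you end up with the inequality chain $M(C_4,x)\le 3x(1-x)^2\le \capp(M(C_4,\cdot))(x)$, strict in both places away from Tur\'an points, and the quantity your method actually bounds is the one on the right, not the one in the middle.

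Neither of your proposed fixes escapes this. ``Tuning $\lambda$ so the extremal multipartite graph has density $x$'' cannot work: for $x$ in the interior of the convex region, as $\lambda$ varies the density of the maximizer of $\rho(C_4,\cdot)-\lambda\rho(\cdot)$ over complete multipartite graphs jumps from one side of the convex arc to the other, never landing at $x$; no choice of Lagrange multiplier produces a supporting line at a point where the feasible profile is locally convex. And a ``piecewise secant/tangent scheme analogous to Lemma~\ref{lem:4100}'' only ever yields concave piecewise-linear upper bounds, which by construction lie on or above the secants through the Tur\'an points and hence above $3x(1-x)^2$. The paper avoids this trap entirely: its proof of Theorem~\ref{THM:4-cycle-induce-upper-bound-x-large} is a direct induction on $n$, with no reduction to multipartite host graphs. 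It uses Goodman's identity (Proposition~\ref{PROP:Goodman-bound-constraint}) plus a Cauchy--Schwarz averaging step (Corollary~\ref{cor:5409}) to exhibit a vertex $v$ whose neighbourhood is rich in triangles, bounds the number of induced $C_4$'s through $v$ by $(d(v)^2/2-e(v))(n-d(v))$, and then applies the induction hypothesis to $G-v$ with a Taylor-expansion bookkeeping of the error terms. Because this is a vertex-removal induction rather than a Lagrangian relaxation, it is insensitive to the convexity of the target function, which is precisely what lets it reach $3x(1-x)^2$ throughout $[2/3,1]$.
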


For the proof we need the following well-known result due to Goodman~\cite{Go59},
whose short proof we include for the sake of completeness. 

\begin{proposition}[Goodman \cite{Go59}]\label{PROP:Goodman-bound-constraint}
For every real number $x\in [0,1]$, every positive integer~$n$,
and every graph $G$ on $n$ vertices with $xn^2/2$ edges we have
\begin{align}
\sum_{v\in V(G)}e(v)
\ge  \sum_{v\in V(G)}d(v)^2 - xn^3/2\,, \notag
\end{align}
where $e(v)=e(G[N(v)])$ denotes the number of triangles containing the vertex $v$. 
\end{proposition}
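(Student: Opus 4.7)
The plan is to establish the inequality by a standard double-counting argument, exploiting the identity $\sum_v e(v) = 3t(G)$, where $t(G)$ denotes the number of triangles in $G$ (each triangle contributes $1$ to $e(v)$ for each of its three vertices).

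First I would fix an edge $\{u, w\}\in E(G)$ and count triangles containing it. The common neighbours of $u$ and $w$ are precisely the vertices that form such a triangle, so their number equals $|N(u) \cap N(w)|$. Applying the trivial bound $|N(u) \cup N(w)| \le n$ in combination with inclusion-exclusion yields
\[
    |N(u) \cap N(w)| \ge d(u) + d(w) - n\,.
\]
This is the one analytical input of the argument and the only place where the total vertex count $n$ enters.

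Next I would sum this inequality over all edges of $G$. The left-hand side becomes $3t(G)$, because each triangle has three edges and is thus counted three times. For the right-hand side I use the handshake identity in the form
\[
    \sum_{\{u,w\}\in E(G)} \bigl(d(u) + d(w)\bigr) = \sum_{v\in V(G)} d(v)^2
\]
(each vertex $v$ appears in exactly $d(v)$ edges, and contributes $d(v)$ to each such edge), and the term $-n$ integrated over the edge set contributes $-n \cdot e(G) = -xn^3/2$. Combining these identities gives
\[
    3t(G) \ge \sum_{v\in V(G)} d(v)^2 - \frac{xn^3}{2}\,.
\]

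Finally, I observe that $\sum_{v\in V(G)} e(v) = 3t(G)$, which finishes the proof. No real obstacle arises here; the argument is essentially an exercise in careful edge-counting, and no appeal to any earlier result in the paper is required.
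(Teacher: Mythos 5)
Your proof is correct and follows essentially the same route as the paper: both arguments double-count the incidences between vertices and edges inside their neighbourhoods (equivalently, triangles counted via their edges), apply the inclusion--exclusion bound $|N(u)\cap N(w)|\ge d(u)+d(w)-n$, and then sum over edges using the identity $\sum_{uw\in E(G)}(d(u)+d(w))=\sum_v d(v)^2$. The only cosmetic difference is that you route the calculation through $3t(G)$, while the paper works directly with $\sum_v e(v)$; the content is identical.
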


\begin{proof}[Proof of Proposition~\ref{PROP:Goodman-bound-constraint}]
Counting the number of pairs $(u, \{v,w\})\in V(G)\times E(G)$ with $v, w\in N(u)$
in two different ways, we obtain
\[
\sum_{u\in V(G)}e(u)
\ge 
\sum_{vw\in G}\bigl(d(v)+d(w)-n\bigr)
=   
\sum_{v\in V(G)}d(v)^2 - e(G) \cdot n\,. \qedhere
\]
\end{proof}

Goodman's formula has the following consequence, which will assist us in the inductive proof 
of Theorem~\ref{THM:4-cycle-induce-upper-bound-x-large}.

\begin{cor}\label{cor:5409}
	Every graph $G$ with $n$ vertices and $xn^2/2$ edges possesses a vertex $v$ satisfying 
	\[
		e(v)\ge \frac{d(v)^2}{2}+\frac{(1-4x+3x^2)n^2}{4}-\frac{(1-x)^3n^3}{4(n-d(v))}\,.
	\]
\end{cor}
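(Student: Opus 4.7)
My plan is to prove Corollary~\ref{cor:5409} by contradiction: assume that \emph{every} vertex $v\in V(G)$ fails the inequality strictly, then sum the resulting strict inequalities over all vertices and combine with Goodman's bound (Proposition~\ref{PROP:Goodman-bound-constraint}) together with two applications of the Cauchy--Schwarz inequality. The punchline, which I will verify below, is that the resulting weak inequalities collide exactly at equality, forcing the desired contradiction.

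In more detail, set $S_1=\sum_{v}d(v)^2$ and $S_2=\sum_{v}\frac{1}{n-d(v)}$. Assuming, for the sake of contradiction, that every vertex violates the bound of Corollary~\ref{cor:5409}, summing gives
\[
\sum_{v}e(v)
<\tfrac12 S_1+\tfrac{(1-4x+3x^2)n^3}{4}-\tfrac{(1-x)^3 n^3}{4}\,S_2\,.
\]
Combining this with Proposition~\ref{PROP:Goodman-bound-constraint}, which yields $\sum_v e(v)\ge S_1-xn^3/2$, and rearranging, I would obtain
\[
\tfrac12 S_1+\tfrac{(1-x)^3 n^3}{4}\,S_2
<
\tfrac{xn^3}{2}+\tfrac{(1-4x+3x^2)n^3}{4}
=\tfrac{n^3}{4}(3x^2-2x+1)\,.
\]

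Next I would estimate the left-hand side from below using two Cauchy--Schwarz inequalities. First, $S_1\ge (\sum_v d(v))^2/n=x^2n^3$, since $\sum_v d(v)=xn^2$. Second, the Cauchy--Schwarz inequality in the form $\sum_v(n-d(v))\cdot \sum_v\frac{1}{n-d(v)}\ge n^2$, together with $\sum_v(n-d(v))=(1-x)n^2$, yields $S_2\ge 1/(1-x)$. Plugging these lower bounds in, the left-hand side is at least
\[
\tfrac{x^2n^3}{2}+\tfrac{(1-x)^2n^3}{4}
=\tfrac{n^3}{4}\bigl(2x^2+(1-x)^2\bigr)
=\tfrac{n^3}{4}(3x^2-2x+1)\,.
\]

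Thus the weak inequality from Cauchy--Schwarz matches the right-hand side exactly, contradicting the strict inequality obtained above. The main non-trivial point, and the only place where some care is needed, is the precise arithmetic identity $2x^2+(1-x)^2=1-2x+3x^2$ that makes the two bounds coincide; this is really the reason the specific constants $1-4x+3x^2$ and $(1-x)^3$ appear in the statement of Corollary~\ref{cor:5409}. No further ideas are required, so I expect the entire argument to fit in a few lines.
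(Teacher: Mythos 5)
Your proof is correct and takes essentially the same route as the paper: the same two Cauchy--Schwarz estimates for $\sum_v d(v)^2$ and $\sum_v (n-d(v))^{-1}$ are combined with Goodman's Proposition~\ref{PROP:Goodman-bound-constraint} by summing over all vertices. The only difference is cosmetic --- the paper does a direct averaging argument while you phrase the same computation as a proof by contradiction.
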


\begin{proof}
	The Cauchy-Schwarz inequality implies $\sum_{v\in V(G)}d(v)^2\ge x^2n^3$ 
	and because of 
	\[
		\sum_{v\in V(G)}(n-d(v))=(1-x)n^2
	\]
	we also have 
	\[
		\sum_{v\in V(G)}\frac 1{n-d(v)}\ge \frac 1{1-x}\,.
	\]
	Consequently, 
	\begin{align*}
		\sum_{v\in V(G)} 
			\Bigl(\frac{d(v)^2}{2}+\frac{(1-4x+3x^2)n^2}{4}-\frac{(1-x)^3n^3}{4(n-d(x))}\Bigr) 
		&\le 
			\sum_{v\in V(G)}\frac{d(v)^2}{2}+\frac{(x^2-x)n^3}{2} \\
		&\le 
			\sum_{v\in V(G)}d(v)^2-xn^3/2\,.
	\end{align*}
	Due to Proposition~\ref{PROP:Goodman-bound-constraint} the result now follows 
	by averaging.  
\end{proof}

\begin{proof}[Proof of Theorem~\ref{THM:4-cycle-induce-upper-bound-x-large}]
	We argue by induction on $n$. The base case $n\le 3$ is clear, for there are no $4$-cycles 
	in graphs with less than four vertices. Now suppose $n\ge 4$ and that our claim
	holds for every graph on $n-1$ vertices.
	
	Given a graph $G$ on $n$ vertices with $xn^2/2$ edges we invoke Corollary~\ref{cor:5409}
	and get a vertex $v\in V(G)$ such that   
	\begin{equation}\label{eq:5421}
		e\ge \frac{d^2}{2}+\frac{(1-4x+3x^2)n^2}{4}-\frac{(1-x)^3n^3}{4(n-d)}\,,
	\end{equation}
	where $d=d(v)$ and $e=e(v)$. We contend that 
	\begin{equation}\label{eq:5422}
		N(C_4 , G)\le N(C_4, G-v)+(d^2/2-e)(n-d)\,,
	\end{equation}
	or, in other words, that there are at most $(d^2/2-e)(n-d)$ induced copies 
	of $K_{2, 2}$ in $G$ which contain the vertex $v$. The reason for this is that each 
	such copy contains a pair of non-adjacent members of $N(v)$ and a fourth vertex belonging 
	to $V(G)\sm N(v)$. Clearly there are at most
	$d^2/2-e$ possibilities for such a non-adjacent pair and at most $n-d$ possibilities 
	for the fourth vertex.
	
	\begin{claim}\label{clm:5429}
		We have 
		\[
			8N(C_4, G-v)\le x(1-x)^2(n^4-4n^3)+2(xn-d)(1-4x+3x^2)n^2+16n^3\,.
		\]
	\end{claim}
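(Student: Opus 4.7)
The plan is to apply the inductive hypothesis of Theorem~\ref{THM:4-cycle-induce-upper-bound-x-large} to $G - v$, which has $n' := n - 1$ vertices and $m' := xn^2/2 - d$ edges. Setting $x' := 2m'/(n-1)^2 = (xn^2 - 2d)/(n-1)^2 \in [0, 1)$ for its edge density, the inductive bound reads $8 N(C_4, G-v) \le x'(1-x')^2 (n-1)^4 + 16(n-1)^3$. Since $16 n^3 - 16(n-1)^3 = 48n^2 - 48n + 16$, the claim reduces to proving
\[
x'(1-x')^2 (n-1)^4 - x(1-x)^2(n^4 - 4n^3) - 2(xn-d)(1-4x+3x^2)n^2 \le 48n^2 - 48n + 16\,.
\]

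The guiding observation is that $f(t) := t(1-t)^2$ has derivative $f'(t) = 1 - 4t + 3t^2$, exactly matching the factor $(1 - 4x + 3x^2)$ in the claim. Thus the right-hand side of the claim has the shape of a first-order Taylor expansion of $f(x')(n-1)^4$ around $t = x$, augmented by the correction from replacing $(n-1)^4$ with $n^4 - 4n^3$. Setting $\epsilon := x' - x$ and noting the exact identity $\epsilon(n-1)^2 = 2(xn-d) - x$, I would substitute the exact Taylor formula $f(x') = f(x) + f'(x)\epsilon + (3x-2)\epsilon^2 + \epsilon^3$ and collect terms. Using $(n-1)^4 - (n^4 - 4n^3) = -6n^2 + 4n - 1$, the left-hand side simplifies to
\[
-f(x)(6n^2 - 4n + 1) - f'(x)\bigl[2(xn-d)(2n-1) + x(n-1)^2\bigr] + (3x-2)\epsilon^2(n-1)^4 + \epsilon^3(n-1)^4\,.
\]

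The first summand is non-positive since $f(x) \ge 0$, and each of the others is $O(n^2)$: using $|f'(x)| \le 1$ and $|xn - d| \le n$, the second is at most $5n^2$; using $|3x - 2| \le 2$ together with $|\epsilon(n-1)^2| \le 2n + 1$, the third is at most $8n^2 + O(n)$; and the fourth is only $O(n)$. Summing, the left-hand side is bounded by roughly $13n^2 + O(n)$, comfortably inside the required $48n^2 - 48n + 16$ slack. The main obstacle is therefore not any single estimate but the careful algebraic bookkeeping of the Taylor expansion; since the slack is ample, no sign-dependent case analysis should be needed.
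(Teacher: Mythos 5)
Your approach is essentially the same as the paper's: both apply the inductive hypothesis to $G-v$, then Taylor-expand $t\mapsto t(1-t)^2$ around $t=x$ (exploiting that $1-4x+3x^2$ is its derivative) and bound each correction term by $O(n^2)$ so that it is absorbed by the slack $16n^3-16(n-1)^3 = 48n^2-48n+16$. The one stylistic difference is that you use the exact degree-$3$ Taylor expansion, while the paper truncates after the linear term and bounds the quadratic remainder via $\|h''\|_{[0,1]}=4$; either works.

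There is, however, a sign error: $(n-1)^4 - (n^4-4n^3) = 6n^2-4n+1$, not $-6n^2+4n-1$. Consequently the first summand in your expansion is $+f(x)(6n^2-4n+1)$, which is \emph{non-negative}, not non-positive as you assert. This does not break the argument, since $f(x)=x(1-x)^2\le 4/27$ on $[0,1]$ makes this term $<n^2$ (or $\le 6n^2$ using the crude bound $f(x)\le 1$), and the corrected total $\approx 14n^2+O(n)$ (resp.\ $\approx 19n^2+O(n)$) is still comfortably inside the available slack. Fix the sign and the computation closes.
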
 
	
	\begin{proof}
		The induction hypothesis yields
		\begin{equation}\label{eq:5947}
			8N(C_4, G-v)\le x'(1-x')^2(n-1)^4+16(n-1)^3\,,
		\end{equation}
		where $x'$ is defined by 
		\[
			x'=\frac{2|E(G-v)|}{(n-1)^2}=\frac{xn^2-2d}{(n-1)^2}\,.
		\]

		The function $h(x)=x(1-x)^2$ has derivatives $h'(x)=1-4x+3x^2$ and $h''(x)=-4+6x$.
		Therefore we have $\|h'\|_{[0, 1]}=1$ and $\|h''\|_{[0, 1]}=4$, 
		where $\|\cdot\|_{[0, 1]}$ denotes the supremum norm with respect to the unit interval.
		So Taylor's formula and~\eqref{eq:5947} imply 
		\begin{align*}
			8N(C_4, G-v)
			\le 
			x(1-x)^2(n-1)^4&+(1-4x+3x^2)(x'-x)(n-1)^4 \\ 
			&+2(x'-x)^2(n-1)^4+16(n-1)^3\,.
		\end{align*}
		Here 
		\[
			x(1-x)^2(n-1)^4\le x(1-x)^2(n^4-4n^3+6n^2)\le x(1-x)^2(n^4-4n^3)+n^2
		\]
		and due to 
		\begin{equation}\label{eq:5006}
			x'-x=\frac{(2n-1)x-2d}{(n-1)^2}
		\end{equation}
		we have $2(x'-x)^2(n-1)^4 = 2\big|(2n-1)x-2d\big|^2\le 8n^2$. 
		For these reasons it suffices to establish 
		\begin{equation}\label{eq:5959}
			(1-4x+3x^2)(x'-x)(n-1)^4
			\le
			2(xn-d)(1-4x+3x^2)n^2+7n^2\,.
		\end{equation}
		Now the triangle inequality yields 
		\begin{align*}
			\big|(x'-x)(n-1)^4 & -2(xn-d)n^2\big| \\
			&\le 
			\big|(x'-x)(n-1)^2-2(xn-d)\big|(n-1)^2+2|xn-d|\bigl(n^2-(n-1)^2\bigr) \\
				&\overset{\eqref{eq:5006}}{\le}
			x(n-1)^2+4n^2
				\le 
			5n^2
		\end{align*}
		and together with $\|h'\|_{[0, 1]}=1$ this proves~\eqref{eq:5959}.
		Thereby Claim~\ref{clm:5429} is proved.
	\end{proof}
	
	Now combining~\eqref{eq:5421},~\eqref{eq:5422}, and Claim~\ref{clm:5429} we obtain
	\begin{align*}
		8N(C_4, G)
		&\le 
		x(1-x)^2(n^4-4n^3)+2(xn-d)(1-4x+3x^2)n^2+16n^3 \\
		&\phantom{xxxxxxxx}-2(1-4x+3x^2)n^2(n-d)+2(1-x)^3 n^3 \\
		&=
		x(1-x)^2n^4+16n^3\,,
	\end{align*}
	as desired. 
\end{proof}

\section{Concluding remarks}\label{SEC:remarks}

\subsection{General questions} 
As the example $Q = K_3 + \overline{K}_{3}$ shows, for a quantum graph~$Q$ 
the function $I(Q,x)$ can have at least two global maxima.
We do not know whether this is possible for single graphs $F$ as well. 

\begin{problem}\label{PROB:two-global-max}
Does there exist a graph $F$ such that the function $I(F,x)$ has at least two global maxima?
\end{problem}

Two questions of a similar flavor are as follows.

\begin{problem}\label{PROB:interval-global-max}
Does there exist a graph $F$ such that for some nontrivial interval $J$ we 
have $I(F,x) = {\mathrm{ind}}(F)$ for all $x\in J$?
\end{problem}

\begin{problem}\label{PROB:local-max-min}
Does there exist a graph $F$ such that the function $I(F,x)$ has a nontrivial local maximum (minimum)?
\end{problem}

Recall that for a self-complementary graph $F$ the function $I(F,x)$ is symmetric 
around $x = 1/2$. One may thus wonder whether some appropriate self-complementary 
graph $F$ yields an affirmative solution to Problem~\ref{PROB:two-global-max}. 
This approach leads to the following question.

\begin{problem}\label{PROB:self-complementary-graph}
Let $F$ be a self-complementary graph.
Is it true that $I(F,x)={\mathrm{ind}}(F)$ holds if and only if $x = 1/2$?
\end{problem}

\subsection{Problems for specific graphs}
The smallest problem left open by our results on stars in Section~\ref{SEC:proof-stars}
is to determine $I(S_3, x)$ for $x\in [1/2, 1]$. In an interesting contrast to the case 
$S_2=K_3^-$ one can show that the clique density construction (see Construction~\ref{con:cdt})
is not extremal for this problem. For $x\in [4\sqrt{2}-5, 1]$ the best construction we are 
aware of
is the complement of a clique of order $\lfloor (1-x)^{1/2}n\rfloor$, which leads 
to the bound 
\begin{equation}\label{eq:9999}
	I(S_3, x)\ge 4(1-(1-x)^{1/2})(1-x)^{3/2}\,.
\end{equation}
For $x\in [0.91, 0.93]$ we have a complicated argument based on the results in~\cite{RW18}
which shows that equality holds in~\eqref{eq:9999}. 
In the range $x\in [1/2, 4\sqrt{2}-5)$ the complement of two 
disjoint cliques of order $\lfloor((1-x)/2)^{1/2} n\rfloor$ shows that $I(S_3, x)$
is strictly larger than the right side of~\eqref{eq:9999}. We hope to return to this 
problem in the near future. 

Finally, we would like to emphasize Conjecture~\ref{conj:cdt} again: 
Is it true that for $x\in [1/2, 1]$ the graphs in Construction~\ref{con:cdt}
minimizing the triangle density maximize the induced~$C_4$ density?
   
\subsection*{Acknowledgement} We would like to thank both referees for helpful
suggestions.

\begin{bibdiv}
\begin{biblist}
\bib{AHKT}{article}{
   author={Alon, Noga},
   author={Hefetz, Dan},
   author={Krivelevich, Michael},
   author={Tyomkyn, Mykhaylo},
   title={Edge-statistics on large graphs},
   journal={Combin. Probab. Comput.},
   volume={29},
   date={2020},
   number={2},
   pages={163--189},
   issn={0963-5483},
   review={\MR{4079631}},
   doi={10.1017/s0963548319000294},
}

\bib{Bo76}{article}{
   author={Bollob\'{a}s, B\'{e}la},
   title={On complete subgraphs of different orders},
   journal={Math. Proc. Cambridge Philos. Soc.},
   volume={79},
   date={1976},
   number={1},
   pages={19--24},
   issn={0305-0041},
   review={\MR{424614}},
   doi={10.1017/S0305004100052063},
}
		
\bib{BS94}{article}{
   author={Brown, Jason I.},
   author={Sidorenko, Alexander},
   title={The inducibility of complete bipartite graphs},
   journal={J. Graph Theory},
   volume={18},
   date={1994},
   number={6},
   pages={629--645},
   issn={0364-9024},
   review={\MR{1292981}},
   doi={10.1002/jgt.3190180610},
}
		
\bib{BL16}{article}{
   author={Bubeck, S\'{e}bastien},
   author={Linial, Nati},
   title={On the local profiles of trees},
   journal={J. Graph Theory},
   volume={81},
   date={2016},
   number={2},
   pages={109--119},
   issn={0364-9024},
   review={\MR{3433633}},
   doi={10.1002/jgt.21865},
}

\bib{Er62}{article}{
   author={Erd\H{o}s, P.},
   title={On the number of complete subgraphs contained in certain graphs},
   language={English, with Russian summary},
   journal={Magyar Tud. Akad. Mat. Kutat\'{o} Int. K\"{o}zl.},
   volume={7},
   date={1962},
   pages={459--464},
   issn={0541-9514},
   review={\MR{151956}},
}

\bib{EL15}{article}{
   author={Even-Zohar, Chaim},
   author={Linial, Nati},
   title={A note on the inducibility of 4-vertex graphs},
   journal={Graphs Combin.},
   volume={31},
   date={2015},
   number={5},
   pages={1367--1380},
   issn={0911-0119},
   review={\MR{3386015}},
   doi={10.1007/s00373-014-1475-4},
}

\bib{FS}{article}{
   author={Fox, Jacob},
   author={Sauermann, Lisa},
   title={A completion of the proof of the edge-statistics conjecture},
   journal={Adv. Comb.},
   date={2020},
   pages={Paper No. 4, 52},
   review={\MR{4125345}},
   doi={10.19086/aic.12047},
}
				
\bib{Go59}{article}{
   author={Goodman, A. W.},
   title={On sets of acquaintances and strangers at any party},
   journal={Amer. Math. Monthly},
   volume={66},
   date={1959},
   pages={778--783},
   issn={0002-9890},
   review={\MR{107610}},
   doi={10.2307/2310464},
}

\bib{HN19}{article}{
   author={Hatami, Hamed},
   author={Norin, Sergey},
   title={On the boundary of the region defined by homomorphism densities},
   journal={J. Comb.},
   volume={10},
   date={2019},
   number={2},
   pages={203--219},
   issn={2156-3527},
   review={\MR{3912211}},
   doi={10.4310/JOC.2019.v10.n2.a1},
}

\bib{HN11}{article}{
   author={Hatami, Hamed},
   author={Norine, Serguei},
   title={Undecidability of linear inequalities in graph homomorphism
   densities},
   journal={J. Amer. Math. Soc.},
   volume={24},
   date={2011},
   number={2},
   pages={547--565},
   issn={0894-0347},
   review={\MR{2748400}},
   doi={10.1090/S0894-0347-2010-00687-X},
}

\bib{Hirst14}{article}{
   author={Hirst, James},
   title={The inducibility of graphs on four vertices},
   journal={J. Graph Theory},
   volume={75},
   date={2014},
   number={3},
   pages={231--243},
   issn={0364-9024},
   review={\MR{3153118}},
   doi={10.1002/jgt.21733},
}

\bib{HLNPS14}{article}{
   author={Huang, Hao},
   author={Linial, Nati},
   author={Naves, Humberto},
   author={Peled, Yuval},
   author={Sudakov, Benny},
   title={On the 3-local profiles of graphs},
   journal={J. Graph Theory},
   volume={76},
   date={2014},
   number={3},
   pages={236--248},
   issn={0364-9024},
   review={\MR{3200287}},
   doi={10.1002/jgt.21762},
}

		
\bib{Ka68}{article}{
   author={Katona, G.},
   title={A theorem of finite sets},
   conference={
      title={Theory of graphs},
      address={Proc. Colloq., Tihany},
      date={1966},
   },
   book={
      publisher={Academic Press, New York},
   },
   date={1968},
   pages={187--207},
   review={\MR{0290982}},
}
		
	
\bib{Kru63}{article}{
   author={Kruskal, Joseph B.},
   title={The number of simplices in a complex},
   conference={
      title={Mathematical optimization techniques},
   },
   book={
      publisher={Univ. of California Press, Berkeley, Calif.},
   },
   date={1963},
   pages={251--278},
   review={\MR{0154827}},
}

\bib{KST19}{article}{
   author={Kwan, Matthew},
   author={Sudakov, Benny},
   author={Tran, Tuan},
   title={Anticoncentration for subgraph statistics},
   journal={J. Lond. Math. Soc. (2)},
   volume={99},
   date={2019},
   number={3},
   pages={757--777},
   issn={0024-6107},
   review={\MR{3977889}},
   doi={10.1112/jlms.12192},
}


\bib{LPSS}{article}{
	author={Liu, Hong},
	author={Pikhurko, Oleg},
	author={Sharifzadeh, Maryam},
	author={Staden, Katherine},
	title={Stability from graph symmetrisation arguments with applications to inducibility},
	eprint={2012.10731},
}

\bib{LPS}{article}{
   author={Liu, Hong},
   author={Pikhurko, Oleg},
   author={Staden, Katherine},
   title={The exact minimum number of triangles in graphs with given order
   and size},
   journal={Forum Math. Pi},
   volume={8},
   date={2020},
   pages={e8, 144},
   review={\MR{4089395}},
   doi={10.1017/fmp.2020.7},
}
		
\bib{LM21A}{article}{
   author={Liu, Xizhi},
   author={Mubayi, Dhruv},
   title={The feasible region of hypergraphs},
   journal={J. Combin. Theory Ser. B},
   volume={148},
   date={2021},
   pages={23--59},
   issn={0095-8956},
   review={\MR{4193065}},
   doi={10.1016/j.jctb.2020.12.004},
}

\bib{MMNT}{article}{
   author={Martinsson, Anders},
   author={Mousset, Frank},
   author={Noever, Andreas},
   author={Truji\'{c}, Milo\v{s}},
   title={The edge-statistics conjecture for $\ell\ll k^{6/5}$},
   journal={Israel J. Math.},
   volume={234},
   date={2019},
   number={2},
   pages={677--690},
   issn={0021-2172},
   review={\MR{4040841}},
   doi={10.1007/s11856-019-1929-8},
}

\bib{Niki11}{article}{
   author={Nikiforov, V.},
   title={The number of cliques in graphs of given order and size},
   journal={Trans. Amer. Math. Soc.},
   volume={363},
   date={2011},
   number={3},
   pages={1599--1618},
   issn={0002-9947},
   review={\MR{2737279}},
   doi={10.1090/S0002-9947-2010-05189-X},
}
	
\bib{Olpp96}{article}{
   author={Olpp, Dieter},
   title={A conjecture of Goodman and the multiplicities of graphs},
   journal={Australas. J. Combin.},
   volume={14},
   date={1996},
   pages={267--282},
   issn={1034-4942},
   review={\MR{1424340}},
}

\bib{PG75}{article}{
   author={Pippenger, Nicholas},
   author={Golumbic, Martin Charles},
   title={The inducibility of graphs},
   journal={J. Combinatorial Theory Ser. B},
   volume={19},
   date={1975},
   number={3},
   pages={189--203},
   issn={0095-8956},
   review={\MR{401552}},
   doi={10.1016/0095-8956(75)90084-2},
}
	
\bib{Ra08}{article}{
   author={Razborov, Alexander A.},
   title={On the minimal density of triangles in graphs},
   journal={Combin. Probab. Comput.},
   volume={17},
   date={2008},
   number={4},
   pages={603--618},
   issn={0963-5483},
   review={\MR{2433944}},
   doi={10.1017/S0963548308009085},
}

\bib{Re16}{article}{
   author={Reiher, Chr.},
   title={The clique density theorem},
   journal={Ann. of Math. (2)},
   volume={184},
   date={2016},
   number={3},
   pages={683--707},
   issn={0003-486X},
   review={\MR{3549620}},
   doi={10.4007/annals.2016.184.3.1},
}
		
\bib{RW18}{article}{
   author={Reiher, Chr.},
   author={Wagner, Stephan},
   title={Maximum star densities},
   journal={Studia Sci. Math. Hungar.},
   volume={55},
   date={2018},
   number={2},
   pages={238--259},
   issn={0081-6906},
   review={\MR{3813354}},
   doi={10.1556/012.2018.55.2.1395},
}

\bib{ST98}{article}{
   author={Schelp, Richard H.},
   author={Thomason, Andrew},
   title={A remark on the number of complete and empty subgraphs},
   journal={Combin. Probab. Comput.},
   volume={7},
   date={1998},
   number={2},
   pages={217--219},
   issn={0963-5483},
   review={\MR{1617934}},
   doi={10.1017/S0963548397003234},
}


\bib{Thom89}{article}{
   author={Thomason, Andrew},
   title={A disproof of a conjecture of Erd\H{o}s in Ramsey theory},
   journal={J. London Math. Soc. (2)},
   volume={39},
   date={1989},
   number={2},
   pages={246--255},
   issn={0024-6107},
   review={\MR{991659}},
   doi={10.1112/jlms/s2-39.2.246},
}
		
\bib{TU41}{article}{
   author={Tur\'{a}n, Paul},
   title={Eine Extremalaufgabe aus der Graphentheorie},
   language={Hungarian, with German summary},
   journal={Mat. Fiz. Lapok},
   volume={48},
   date={1941},
   pages={436--452},
   issn={0302-7317},
   review={\MR{18405}},
}

\end{biblist}
\end{bibdiv}
\end{document}